\numberwithin{equation}{section}
\numberwithin{figure}{section}
\theoremstyle{plain}
\newtheorem{thm}{\protect\theoremname}[section]
\theoremstyle{definition}
\newtheorem{defn}[thm]{\protect\definitionname}
\theoremstyle{plain}
\newtheorem{lem}[thm]{\protect\lemmaname}
\theoremstyle{plain}
\newtheorem{prop}[thm]{\protect\propositionname}
\theoremstyle{definition}
\newtheorem{example}[thm]{\protect\examplename}
\theoremstyle{definition}
\newtheorem{problem}[thm]{\protect\problemname}
\theoremstyle{plain}
\newenvironment{manualtheorem}[1]{%
  \manualtheoreminner
}{\endmanualtheoreminner}
\providecommand{\definitionname}{Definition}
\providecommand{\examplename}{Example}
\providecommand{\lemmaname}{Lemma}
\providecommand{\problemname}{Problem}
\providecommand{\propositionname}{Proposition}
\providecommand{\theoremname}{Theorem}
\begin{document}
\global\long\def\epsilon{\varepsilon}%

\global\long\def\phi{\varphi}%

\global\long\def\NN{\mathbb{N}}%

\global\long\def\RR{\mathbb{R}}%

\global\long\def\SS{\mathbb{S}}%

\global\long\def\KK{\mathcal{K}}%

\global\long\def\oo{\mathbf{1}}%

\global\long\def\dd{\mathrm{d}}%

\global\long\def\div{\operatorname{div}}%

\global\long\def\cvx{\operatorname{Cvx}}%

\global\long\def\lc{\operatorname{LC}}%

\global\long\def\dom{\operatorname{dom}}%

\global\long\def\HH{\mathcal{H}}%

\global\long\def\bd#1{\overline{#1}}%

\title{A Riesz representation theorem for log-concave functions}
\author{Liran Rotem}
\address{Department of Mathematics, Technion - Israel Institute of Technology,
Israel}
\email{lrotem@technion.edu}
\thanks{The author is partially supported by ISF grant 1468/19 and BSF grant
2016050.}
\begin{abstract}
The classic Riesz representation theorem characterizes all linear
and increasing functionals on the space $C_{c}(X)$ of continuous
compactly supported functions. A geometric version of this result,
which characterizes all linear increasing functionals on the set of
convex bodies in $\RR^{n}$, was essentially known to Alexandrov.
This was used by Alexandrov to prove the existence of mixed area measures
in convex geometry. 

In this paper we characterize linear and increasing functionals on
the class of log-concave functions on $\RR^{n}$. Here ``linear''
means linear with respect to the natural addition on log-concave functions
which is the sup-convolution. Equivalently, we characterize pointwise-linear
and increasing functionals on the class of convex functions. For some
choices of the exact class of functions we prove that there are no
non-trivial such functionals. For another choice we obtain the expected
analogue of the result for convex bodies. And most interestingly,
for yet another choice we find a new unexpected family of such functionals. 

Finally, we explain the connection between our results and recent
work done in convex geometry regarding the surface area measure of
a log-concave functions. An application of our results in this direction
is also given. 
\end{abstract}

\maketitle

\section{\label{sec:introduction}Introduction}

The Riesz (or Riesz–Markov–Kakutani) representation theorem is the
following classic result of functional analysis:
\begin{thm}
\label{thm:riesz-classic}Let $X$ be a locally compact Hausdorff
space. Let $C_{c}(X)$ denote the class of all continuous and compactly
supported functions $f:X\to\RR$. Let $F:C_{c}(X)\to\RR$ be a functional
such that:
\begin{enumerate}
\item $F$ is linear: $F(\alpha f+\beta g)=\alpha F(f)+\beta F(g)$ for
all $f,g\in C_{c}(X)$ and $\alpha,\beta\in\RR$.
\item $F$ is increasing: If $f,g\in C_{c}(X)$ and $f\ge g$ then $F(f)\ge F(g)$.
\end{enumerate}
Then there exists a unique positive Radon measure $\mu$ on $X$ such
that $F(f)=\int_{X}f\dd\mu$ for all $f\in C_{c}(X)$.
\end{thm}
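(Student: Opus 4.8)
The plan is the classical construction: manufacture the measure $\mu$ directly out of $F$, show it is Radon, verify the integral formula, and deduce uniqueness from regularity. Two preliminary observations. First, for a \emph{linear} functional ``increasing'' is the same as ``positive'': $f\ge g$ means $f-g\ge0$, so property (2) is equivalent to $F(h)\ge0$ for every $h\ge0$. Second, the only topological inputs needed are Urysohn's lemma for locally compact Hausdorff spaces --- for a compact $K$ contained in an open $U$ there is $f\in C_{c}(X)$ with $\oo_{K}\le f\le\oo_{U}$ and $\operatorname{supp}f$ a compact subset of $U$ --- and the existence of partitions of unity subordinate to a finite open cover of a compact set.

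First I would construct $\mu$. For an open set $U\subseteq X$ set
\[
\mu(U)=\sup\bigl\{F(f):f\in C_{c}(X),\ 0\le f\le1,\ \operatorname{supp}f\subseteq U\bigr\},
\]
and for an arbitrary $E\subseteq X$ set $\mu^{*}(E)=\inf\{\mu(U):E\subseteq U\text{ open}\}$. One checks that these two prescriptions agree on open sets, that $\mu^{*}$ is monotone and countably subadditive --- hence an outer measure --- and that $\mu^{*}(K)<\infty$ for every compact $K$ (bound $F(f)$ for the competitors $f$ by applying $F$ to a fixed Urysohn function equal to $1$ on a neighbourhood of $K$). The technical heart of the construction is the proof that every open set satisfies the Carath\'eodory splitting condition; this is exactly where Urysohn's lemma is used, to produce test functions witnessing near-additivity of $\mu^{*}$ across an open set. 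Granting this, all Borel sets are $\mu^{*}$-measurable, so the restriction $\mu$ of $\mu^{*}$ to $\mathcal{B}(X)$ is a Borel measure; it is outer regular by construction, a short argument gives inner regularity on open sets and on sets of finite measure, and finiteness on compacts was already noted, so $\mu$ is a positive Radon measure.

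Next I would prove $F(f)=\int_{X}f\,\dd\mu$, and it is enough to prove $F(f)\le\int_{X}f\,\dd\mu$ for every real $f\in C_{c}(X)$, since applying this to $-f$ then yields the reverse inequality. Fix $\epsilon>0$, put $K=\operatorname{supp}f$, choose reals $y_{0}<y_{1}<\cdots<y_{N}$ of mesh less than $\epsilon$ with $y_{0}<\min_{X}f$ and $y_{N}>\max_{X}f$, and let $E_{i}=K\cap\{y_{i-1}<f\le y_{i}\}$, a Borel partition of $K$. Using outer regularity choose open $V_{i}\supseteq E_{i}$ with $\mu(V_{i})<\mu(E_{i})+\epsilon/N$ and, by continuity of $f$, also $V_{i}\subseteq\{f<y_{i}+\epsilon\}$; then $\{V_{i}\}$ is an open cover of $K$, so there is a subordinate partition of unity $h_{1},\dots,h_{N}\in C_{c}(X)$ with $0\le h_{i}\le1$, $\operatorname{supp}h_{i}\subseteq V_{i}$, and $\sum_{i}h_{i}=1$ on $K$ (and $\le1$ everywhere). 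Writing $f=\sum_{i}h_{i}f$ and using $h_{i}f\le(y_{i}+\epsilon)h_{i}$ pointwise gives $F(f)\le\sum_{i}(y_{i}+\epsilon)F(h_{i})$; bounding $F(h_{i})\le\mu(V_{i})<\mu(E_{i})+\epsilon/N$ (and handling the sign of the coefficients $y_{i}+\epsilon$ by the standard shift, using $F(\sum_{i}h_{i})\le\mu(\bigcup_{i}V_{i})<\mu(K)+\epsilon$), and then comparing $\sum_{i}y_{i}\mu(E_{i})$ with $\int_{X}f\,\dd\mu$ via $y_{i}<f+\epsilon$ on $E_{i}$, one arrives at $F(f)\le\int_{X}f\,\dd\mu+C\epsilon$ with $C$ depending only on $f$. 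Letting $\epsilon\to0$ finishes this step.

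Finally, for uniqueness I would argue as follows. Suppose $\nu$ is another positive Radon measure with $F(f)=\int_{X}f\,\dd\nu$ for all $f\in C_{c}(X)$. Given a compact $K$ and an open $U\supseteq K$, pick by Urysohn an $f\in C_{c}(X)$ with $\oo_{K}\le f\le\oo_{U}$; then $\nu(K)\le\int_{X}f\,\dd\nu=F(f)=\int_{X}f\,\dd\mu\le\mu(U)$, and taking the infimum over such $U$ and invoking outer regularity of $\mu$ yields $\nu(K)\le\mu(K)$. By symmetry $\nu(K)=\mu(K)$ for every compact $K$, and then inner regularity on open sets followed by outer regularity on Borel sets forces $\mu=\nu$ throughout $\mathcal{B}(X)$. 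The step I expect to be the main obstacle is the one flagged above --- showing that open sets are Carath\'eodory-measurable and that $\mu$ thereby becomes a genuine Radon measure --- together with the two-sided estimate in the representation formula; the remaining points are routine bookkeeping.
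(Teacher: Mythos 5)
This theorem is quoted in the paper without proof (the reader is simply referred to Section 7.1 of \cite{Folland1999}), and your outline is exactly that standard Riesz--Markov--Kakutani construction: the outer measure built from $\sup F$ over subordinate test functions, Carath\'eodory measurability of the Borel sets, the partition-of-unity estimate giving $F(f)\le\int_{X}f\,\dd\mu$ (hence equality by linearity), and uniqueness of the Radon measure via inner/outer regularity. Your sketch is correct, and the genuinely technical points --- Carath\'eodory measurability of open sets and the sign/shift bookkeeping in the approximation step --- are rightly flagged as where the detailed work lies.
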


For the proof one may consult any standard text on measure theory,
e.g. Section 7.1 of \cite{Folland1999}. 

In this paper we study geometric forms of the Riesz representation
theorem. We start with a well known result about convex bodies. We
quickly give the basic definitions here, and refer the reader to \cite{Schneider2013}
or \cite{Hug2020} for more information. We denote by $\KK^{n}$ the
class of all compact convex sets $K\subseteq\RR^{n}$. Given $K,L\in\KK^{n}$
their Minkowski addition is defined as 
\[
K+L=\left\{ x+y:\ x\in K,\ y\in L\right\} .
\]
 For $K\in\KK^{n}$ and $\lambda>0$ we set $\lambda\cdot K=\left\{ \lambda x:\ x\in K\right\} $.
The operations $+$ and $\cdot$ turn $\KK^{n}$ into a cone. Finally,
the support function of a convex body $K\in\KK^{n}$ is the function
$h_{K}:\SS^{n-1}\to\RR$ defined by 
\[
h_{K}(\theta)=\max_{x\in K}\left\langle x,\theta\right\rangle .
\]
 Here $\left\langle \cdot,\cdot\right\rangle $ denotes the Euclidean
inner product on $\RR^{n}$ and $\SS^{n-1}=\left\{ x\in\RR^{n}:\ \left|x\right|=1\right\} $
denotes the unit sphere. 

We can now state a Riesz-type theorem for convex bodies:
\begin{thm}
\label{thm:riesz-bodies}Let $F:\KK^{n}\to\RR$ be a functional such
that:
\begin{enumerate}
\item $F$ is linear: $F(\alpha K+\beta L)=\alpha F(K)+\beta F(L)$ for
all $K,L\in\KK^{n}$ and $\alpha,\beta>0$.
\item $F$ is increasing: If $K,L\in\KK^{n}$ and $K\supseteq L$ then $F(K)\ge F(L)$.
\end{enumerate}
Then there exists a unique positive and finite Borel measure $\mu$
on $\SS^{n-1}$ such that $F(K)=\int_{\SS^{n-1}}h_{K}\dd\mu$.
\end{thm}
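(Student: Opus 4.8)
The plan is to use the support function to embed $\KK^{n}$ into the Banach space $C(\SS^{n-1})$ and then reduce everything to the classical Riesz theorem (Theorem~\ref{thm:riesz-classic}) applied on the compact space $X=\SS^{n-1}$. I will freely use the standard dictionary $h_{K+L}=h_{K}+h_{L}$, $h_{\lambda K}=\lambda h_{K}$ for $\lambda>0$, and $K\subseteq L\iff h_{K}\le h_{L}$; in particular a body is recovered from its support function. First I would record two elementary consequences of the hypotheses: setting $K=L=\{0\}$ in the linearity relation gives $F(\{0\})=2F(\{0\})$, so $F(\{0\})=0$, and then monotonicity gives $F(K)\ge 0$ whenever $0\in K$, and in particular $F(B)\ge 0$, where $B$ denotes the Euclidean unit ball (so that $h_{B}=\oo$ on $\SS^{n-1}$).

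Next I would introduce the set
\[
V=\left\{h_{K}-h_{L}:\ K,L\in\KK^{n}\right\}\subseteq C(\SS^{n-1}).
\]
By the dictionary $V$ is a linear subspace (it is closed under sums, under dilations, and under negation), and it contains the constant functions since $\oo=h_{B}\in V$. On $V$ I would define $\widetilde{F}(h_{K}-h_{L})=F(K)-F(L)$. This is well defined: if $h_{K}-h_{L}=h_{K'}-h_{L'}$ then $h_{K+L'}=h_{K'+L}$, hence $K+L'=K'+L$, and applying the additivity of $F$ to both sides yields $F(K)-F(L)=F(K')-F(L')$. The dictionary also shows that $\widetilde{F}$ is linear on $V$ and \emph{monotone}: if $h_{K}-h_{L}\ge 0$ then $K\supseteq L$, so $\widetilde{F}(h_{K}-h_{L})=F(K)-F(L)\ge 0$. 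Sandwiching $-\|f\|_{\infty}\oo\le f\le\|f\|_{\infty}\oo$ for $f\in V$ and applying $\widetilde{F}$ then gives $\bigl|\widetilde{F}(f)\bigr|\le F(B)\|f\|_{\infty}$, so $\widetilde{F}$ is a bounded, positive linear functional on $V$.

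The heart of the matter, and the step I expect to be the main obstacle, is the \emph{density of $V$ in $C(\SS^{n-1})$} with respect to the uniform norm. The classical argument (see \cite{Schneider2013}) is that if $f\in C^{2}(\SS^{n-1})$ is extended to a $1$-homogeneous function on $\RR^{n}\setminus\{0\}$, then $x\mapsto f(x)+c\left|x\right|$ is convex on $\RR^{n}$ once $c$ is large enough, hence equals $h_{K}$ for some $K\in\KK^{n}$; thus $f=h_{K}-h_{cB}\in V$, and since $C^{2}$ functions are dense in $C(\SS^{n-1})$ so is $V$. Granting this, $\widetilde{F}$ extends uniquely by continuity to a bounded linear functional $\bd F$ on $C(\SS^{n-1})$. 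This extension stays positive: given $f\ge 0$ in $C(\SS^{n-1})$ and $f_{n}\to f$ uniformly with $f_{n}\in V$, the functions $g_{n}=f_{n}+\|f-f_{n}\|_{\infty}\oo\in V$ satisfy $g_{n}\ge f\ge 0$, whence $\widetilde{F}(g_{n})\ge 0$, and letting $n\to\infty$ gives $\bd F(f)\ge 0$. So $\bd F$ is linear and increasing on $C(\SS^{n-1})=C_{c}(\SS^{n-1})$, and Theorem~\ref{thm:riesz-classic} produces a unique positive Radon measure $\mu$ on $\SS^{n-1}$ with $\bd F(f)=\int_{\SS^{n-1}}f\,\dd\mu$; it is finite because $\mu(\SS^{n-1})=\bd F(\oo)=F(B)<\infty$. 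Restricting to support functions gives $F(K)=\widetilde{F}(h_{K})=\int_{\SS^{n-1}}h_{K}\,\dd\mu$, which is the asserted formula. For uniqueness, two representing measures agree on every $h_{K}-h_{L}\in V$, hence on all of $C(\SS^{n-1})$ by density (the measures being finite), hence they coincide. Apart from the density lemma, the whole proof is bookkeeping with the support-function dictionary together with the classical Riesz theorem; this is essentially the argument going back to Alexandrov.
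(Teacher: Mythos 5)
Your proposal is correct and follows essentially the same route as the paper's proof sketch: pass to differences of support functions, check that $\widetilde{F}$ is well defined, linear, and monotone (hence bounded), extend by density of $C^{2}(\SS^{n-1})$ to all of $C(\SS^{n-1})$, and invoke the classical Riesz theorem for both existence and uniqueness. The extra details you supply (cancellation for well-definedness, positivity of the extension, finiteness via $F(B)$) are exactly the routine verifications the paper leaves implicit.
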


Recall that a finite Borel measure on $\SS^{n-1}$ or $\RR^{n}$ is
automatically Radon (see e.g. Theorems 1.1 and 1.3 of \cite{Billingsley1999}),
so we can ignore issues of regularity in this theorem and throughout
this paper. 

In its stated form Theorem \ref{thm:riesz-bodies} appears to be folklore.
However, Alexandrov knew and used this result for a specific function
$F$, and his proof works in complete generality (see \cite{Alexandrov1937}
for the original in Russian, and Section 4 of \cite{Alexandrov2019}
for an English translation). A modern presentation of Alexandrov's
result with essentially the same reasoning can be found as Theorem
4.1 of \cite{Hug2020}, and a few more references and historical remarks
can be found in Note 1 after Section 5.1 of \cite{Schneider2013}. 

Since we will use Theorem \ref{thm:riesz-bodies} in the sequel we
sketch its proof:
\begin{proof}[Proof Sketch.]
Let $C\left(\SS^{n-1}\right)$ denote the space of continuous functions
on the sphere with its usual supremum norm, and define 
\[
E=\left\{ h_{K}-h_{L}:\ K,L\in\KK^{n}\right\} \subseteq C\left(\SS^{n-1}\right).
\]
 It is well known that $E$ contains $C^{2}\left(\SS^{n-1}\right)$,
the space of twice differentiable functions, so in particular $E$
is dense in $C\left(\SS^{n-1}\right)$. We now define $\widetilde{F}:E\to\RR$
by 
\[
\widetilde{F}\left(h_{K}-h_{L}\right)=F(K)-F(L).
\]
 It is easy to check that $\widetilde{F}$ is well-defined, linear
and increasing. It follows that $\widetilde{F}$ is continuous, so
it has a unique extension to $C\left(\SS^{n-1}\right)$ which is again
linear and increasing. By Theorem \ref{thm:riesz-classic} there exists
a positive Random measure $\mu$ on $\SS^{n-1}$ such that $\widetilde{F}(f)=\int_{\SS^{n-1}}f\dd\mu$
for all $f\in C\left(\SS^{n-1}\right)$. In particular 
\[
F(K)=\widetilde{F}\left(h_{K}\right)=\int_{\SS^{n-1}}f\dd\mu.
\]
 Since $\SS^{n-1}$ is compact $\mu$ is clearly finite. 

Finally, for uniqueness, assume $\int_{\SS^{n-1}}h_{K}\dd\mu=\int_{\SS^{n-1}}h_{K}\dd\widetilde{\mu}$
for all $K\in\KK^{n}$. Then $\int_{\SS^{n-1}}f\dd\mu=\int_{\SS^{n-1}}f\dd\widetilde{\mu}$
for every $f\in E$, and therefore for every $f\in C\left(\SS^{n-1}\right)$.
By the uniqueness part of Theorem \ref{thm:riesz-classic} it follows
that $\mu=\widetilde{\mu}$.
\end{proof}
While interesting in its own right, Theorem \ref{thm:riesz-bodies}
also has applications to convex geometry. To explain the idea, assume
$G:\KK^{n}\to\RR$ is some measure of the ``size'' of convex bodies.
We expect $G$ to be increasing, but not necessarily linear. However,
often one can linearize $G$: we fix $L\in\KK^{n}$, and define $F_{L}:\KK^{n}\to\RR$
by 
\[
F_{L}(K)=\lim_{t\to0^{+}}\frac{G(L+tK)-G(L)}{t}.
\]
 For several natural choices of $G$ the functions $F_{L}$ will be
well-defined and linear. Since they are clearly increasing we can
apply Theorem \ref{thm:riesz-bodies} and conclude that 
\begin{equation}
\lim_{t\to0^{+}}\frac{G(L+tK)-G(L)}{t}=\int_{\SS^{n-1}}h_{K}\dd\mu_{L}\label{eq:body-derivative}
\end{equation}
for a measure $\mu_{L}$ which depends on $G$ and $L$. Studying
the measures $\mu_{L}$ can be of great importance.

The simplest possible example is the choice $G(K)=\left|K\right|$,
i.e. the volume of $K$. In this case we have the formula 
\[
\lim_{t\to0^{+}}\frac{\left|L+tK\right|-\left|L\right|}{t}=\int_{\SS^{n-1}}h_{K}\dd S_{L},
\]
 where $S_{L}$ is known as the \emph{surface area measure} of the
body $L$. 

This example can be extended using the theory of mixed volumes. For
example, we can take $G(K)=v_{i}(K)$, where $v_{i}$ is the $i$'th
intrinsic volume, or more generally 
\[
G(K)=V\left(L_{1},L_{2},\ldots,L_{n-m},\underbrace{K,K,\ldots,K}_{m\text{ times}}\right)
\]
 for some fixed convex bodies $L_{1},\ldots,L_{n-m}\in\KK^{n}$. In
these examples all linearizations will always be of the form $c\cdot F_{L_{1},\ldots,L_{n-1}}(K)=c\cdot V(L_{1},\ldots,L_{n-1},K)$
for some constant $c>0$ and some convex bodies $L_{1},\ldots,L_{n-1}$.
The measure which represents the functional $F_{L_{1},\ldots,L_{n-1}}$
is denoted by $S_{L_{1},\ldots,L_{n-1}}$ and is known as a \emph{mixed
area measure}. This example of $F_{L_{1},\ldots,L_{n-1}}$ is exactly
the one studied by Alexandrov in \cite{Alexandrov1937}. More information
on mixed volumes and mixed area measures can be found in \cite{Schneider2013}
or \cite{Hug2020}, but we will not need these notions for the rest
of the paper. 

As another possible extension one can take $G(K)=\nu(K)$ where $\nu$
is a measure on $\RR^{n}$ with a continuous density. Livshyts proved
in \cite{Livshyts2019} that in this case \eqref{eq:body-derivative}
still holds, and gave an explicit formula for the measure $\mu_{L}$. 

We now turn our attention from bodies to functions. A function $f:\RR^{n}\to[0,\infty)$
which is not identically $0$ is called log-concave if for every $x,y\in\RR^{n}$
and every $0\le\lambda\le1$ one has 
\[
f\left((1-\lambda)x+\lambda y\right)\ge f(x)^{1-\lambda}f(y)^{\lambda}.
\]
 In other words $f$ is log-concave if it is of the form $f=e^{-\phi}$
where $\phi:\RR^{n}\to(-\infty,\infty]$ is a convex function. We
denote by $\lc_{n}$ the class of upper semi-continuous log-concave
functions on $\RR^{n}$. Similarly we denote by $\cvx_{n}$ the class
of all lower semi-continuous convex functions on $\RR^{n}$ (which
are not identically $+\infty$). If $K\in\KK^{n}$ then $\oo_{K}\in\lc_{n}$
, where $\oo_{K}$ is the indicator function of $K$. In this sense
we have a natural embedding $\KK^{n}\hookrightarrow\lc_{n}$. 

It is well understood nowadays that even if one is ultimately only
interested in convex bodies, it is extremely useful to also consider
log-concave functions and treat them as ``generalized convex bodies''.
This opens the door to the use of various analytic and probabilistic
techniques in convex geometry, and allows us to make progress on previously
impenetrable problems. The systematic geometric treatment of log-concave
functions originated in the work of Klartag and Milman (\cite{Klartag2005})
and in the proof of the functional Santaló inequality by Artstein-Avidan,
Klartag and Milman (\cite{Artstein-Avidan2004}), even though in retrospect
one can interpret older results in this language. For example, the
Prékopa–Leindler inequality (\cite{Prekopa1971}, \cite{Leindler1972})
from the 70's can be interpreted as a functional version of the Brunn–Minkowski
inequality. 

Some of the earlier developments in this direction can be found in
Section 9.5 of \cite{Schneider2013} and in the survey \cite{Milman2008}.
The explosion in the field since then makes it impossible to include
here a manageable list of references. Instead, let us just mention
that studying \emph{functionals} on log-concave (or convex) functions,
i.e. maps $F:\lc_{n}\to\RR$, is a very active field of research.
Usually one studies valuations on this space, i.e. functionals $F$
which satisfy
\[
F\left(\max(f,g)\right)+F\left(\min(f,g)\right)=F(f)+F(g)
\]
 whenever $f,g,\max(f,g)\in\lc_{n}$. Many results on such valuations
including partial classifications were recently found by Colesanti,
Ludwig and Mussnig (\cite{Colesanti2017a,Colesanti2019,Colesanti2020,Colesanti2020a}),
by Mussnig (\cite{Mussnig2019,Mussnig2021}), by Alesker (\cite{Alesker2019}),
and by Knoerr (\cite{Knoerr2020,Knoerr2021}).

In this paper we are interested in Riesz-type representation theorems
on $\lc_{n}$. In other words, we will classify functionals $F:\lc_{n}\to\RR$
which are linear and increasing. Of course, this requires an addition
operation on $\lc_{n}$. The standard choice that appeared already
in \cite{Klartag2005} is the sup-convolution (also known as the Asplund
sum), defined by 
\[
\left(f\star g\right)(x)=\sup_{y\in\RR^{n}}\left(f(y)g(x-y)\right).
\]
Additionally, if $\lambda>0$ we define the dilation $\lambda\cdot f$
by $\left(\lambda\cdot f\right)(x)=f\left(\frac{x}{\lambda}\right)^{\lambda}$.
These operations extend the standard operations on convex bodies under
the embedding $\KK^{n}\hookrightarrow\lc_{n}$ mentioned above. 

A technical issue is that for $f,g\in\lc_{n}$ the sup-convolution
$f\star g$ could be equal to $+\infty$, which we do not allow, or
may fail to be upper semi-continuous. The latter problem can be fixed
by defining $f\star g$ to be the closure of the sup-convolution (see
e.g. Section 7 of \cite{Rockafellar1970}), but the first problem
doesn't have such a solution. These problems disappear for ``nice
enough'' functions as we shall soon see, so in practice this will
not cause difficulties anywhere in this paper. We just have to be
slightly careful and define a functional $F:\lc_{n}\to\RR$ to be
linear if 
\[
F\left(\left(\alpha\cdot f\right)\star\left(\beta\cdot g\right)\right)=\alpha F(f)+\beta F(g)
\]
 holds for every $f,g\in\lc_{n}$ and every $\alpha,\beta>0$ such
that $\left(\alpha\cdot f\right)\star\left(\beta\cdot g\right)\in\lc_{n}$. 

Another way to understand the addition operation on $\lc_{n}$ is
by using the \emph{support function}. For a function $f\in\lc_{n}$
we define its support function $h_{f}\in\cvx_{n}$ by $h_{f}=\left(-\log f\right)^{\ast}$.
Here $\ast$ denotes the classical Legendre transform, i.e. 
\[
\phi^{\ast}(y)=\sup_{x\in\RR^{n}}\left(\left\langle x,y\right\rangle -\phi(x)\right).
\]
 This definition also extends the classical one for convex bodies,
in the sense that $h_{\oo_{K}}=h_{K}$. The important thing for us
is that for every $f,g\in\lc_{n}$ and $\alpha,\beta>0$ we have $h_{\left(\alpha\cdot f\right)\star\left(\beta\cdot g\right)}=\alpha h_{f}+\beta h_{g}$
as expected. In fact, this property essentially characterizes the
support map (see Theorem 6 of \cite{Artstein-Avidan2010a}). Since
$\phi^{\ast\ast}=\phi$ for every $\phi\in\cvx_{n}$ we see that for
every $f\in\lc_{n}$ we have $f=e^{-h_{f}^{\ast}}$ and that every
$\phi\in\cvx_{n}$ is the support function of a unique $f\in\lc_{n}$. 

Since we now have an addition, we understand what it means for $F:\lc_{n}^{c}\to\RR$
to be linear. However, our first attempt at classifying linear and
increasing functionals will be very underwhelming:
\begin{thm}
\label{thm:riesz-functions-naive}Let $F:\lc_{n}\to\RR$ be a linear
and increasing functional. Then $F(f)=0$ for all $f\in\lc_{n}$. 
\end{thm}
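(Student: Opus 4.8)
The plan is to trap an arbitrary $f\in\lc_n$ between two very explicit log-concave functions on which $F$ is forced to vanish, and then invoke monotonicity. Write $f=e^{-\phi}$ with $\phi\in\cvx_n$. For $v\in\RR^n$ and $a\in\RR$ let $\delta_{v,a}\in\lc_n$ denote the function equal to $e^{-a}$ at the point $v$ and to $0$ elsewhere. Since $f$ is nonnegative and not identically $0$ there is a point $x_0$ with $f(x_0)>0$, and then $\delta_{x_0,-\log f(x_0)}\le f$ pointwise. On the other hand $\phi$, being a proper lower semicontinuous convex function, lies above some affine function $\ell(x)=\langle a,x\rangle+b$ (a standard fact), so $f\le e^{-\ell}$, and $e^{-\ell}\in\lc_n$. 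Thus it suffices to prove that $F(\delta_{v,a})=0$ for all $v,a$ and that $F(e^{-\ell})=0$ for every affine $\ell$; monotonicity of $F$ then sandwiches $0\le F(f)\le 0$.

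I would first record an idempotency observation. For fixed $a\in\RR^n$ the exponential--linear function $g_a:=e^{-\langle a,\cdot\rangle}$ lies in $\lc_n$ and satisfies $g_a\star g_a=g_a$ (immediate from the definition of the sup-convolution), so applying linearity with $\alpha=\beta=1$ yields $F(g_a)=F(g_a)+F(g_a)$, i.e.\ $F(g_a)=0$. Next I would pin down $F$ on the family $\{\delta_{v,a}\}$: a direct computation gives $\alpha\cdot\delta_{v,a}=\delta_{\alpha v,\alpha a}$ and $\delta_{v,a}\star\delta_{w,b}=\delta_{v+w,a+b}$, so $(\alpha\cdot\delta_{v,a})\star(\beta\cdot\delta_{w,b})=\delta_{\alpha v+\beta w,\alpha a+\beta b}\in\lc_n$, and linearity forces $(v,a)\mapsto F(\delta_{v,a})$ to be additive and positively homogeneous on $\RR^{n+1}$, hence linear (an elementary fact): $F(\delta_{v,a})=\langle u,v\rangle+ca$ for some $u\in\RR^n$ and $c\in\RR$. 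Now monotonicity enters: since $\delta_{x_0,\langle a,x_0\rangle}\le g_a$ for every $x_0\in\RR^n$, we get $\langle u+ca,x_0\rangle=F(\delta_{x_0,\langle a,x_0\rangle})\le F(g_a)=0$; as $x_0$ ranges over $\RR^n$ this forces $u+ca=0$, and as $a$ ranges over $\RR^n$ we conclude $u=0$ and $c=0$. Hence $F(\delta_{v,a})=0$ identically.

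It then remains to pass from $g_a$ to a general exponential--affine function $e^{-\ell}$ with $\ell(x)=\langle a,x\rangle+b$. Applying the vanishing of $F$ on $\delta$-functions to $e^{-\ell}$ itself (via $\delta_{x_0,\ell(x_0)}\le e^{-\ell}$) gives $F(e^{-\ell})\ge 0$, and likewise $F(e^{-(\langle a,\cdot\rangle-b)})\ge 0$; but a direct computation gives $e^{-\ell}\star e^{-(\langle a,\cdot\rangle-b)}=g_a$, so linearity yields $F(e^{-\ell})+F(e^{-(\langle a,\cdot\rangle-b)})=F(g_a)=0$, forcing both summands to vanish. Together with the reduction in the first paragraph, this proves the theorem.

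The step I expect to be the crux --- and the reason the statement is true at all --- is the use of the \emph{unbounded} functions $g_a=e^{-\langle a,\cdot\rangle}$. With only bounded log-concave functions and constant functions available, the obvious sandwich (a point mass below, a constant function above) merely yields $F(f)=-c\log\sup f$, which is entirely consistent with a nonzero linear increasing functional; this is exactly the flexibility that persists on certain subclasses of $\lc_n$. It is the presence in $\lc_n$ of the exponential--affine functions, serving simultaneously as majorants of an arbitrary $f$ and --- through their idempotency --- as a device to kill the coefficients $u$ and $c$, that collapses everything to $0$. One technical point to keep in mind, which however causes no real difficulty, is that a sup-convolution may a priori be $+\infty$-valued or fail upper semicontinuity; but every sup-convolution used above is computed explicitly and is manifestly an element of $\lc_n$.
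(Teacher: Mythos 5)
Your proof is correct, and it takes a genuinely different route from the paper's. All the sup-convolutions you invoke are computed explicitly and land in $\lc_{n}$ (namely $g_{a}\star g_{a}=g_{a}$, $(\alpha\cdot\delta_{v,a})\star(\beta\cdot\delta_{w,b})=\delta_{\alpha v+\beta w,\alpha a+\beta b}$, and the product of the two exponential--affine functions giving $g_{a}$), so the paper's restricted notion of linearity applies at every step; and your ``elementary fact'' is fine, since additivity together with positive homogeneity yields $T(0)=0$, $T(-z)=-T(z)$, hence full real homogeneity and genuine linearity of $(v,a)\mapsto F(\delta_{v,a})$ on $\RR^{n+1}$ without any continuity assumption. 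The paper instead transfers the problem to convex functions via $G(\phi)=F\left(e^{-\phi^{\ast}}\right)$ and uses the single collapse identity $\phi+\oo_{\left\{ p\right\} }^{\infty}=\phi(p)\cdot\oo+\oo_{\left\{ p\right\} }^{\infty}$, which by linearity alone gives $G(\phi)=G(\oo)\,\phi(p)$ for every $p\in\dom\phi$; testing on $\phi(x)=\left|x\right|$ at two points kills $G(\oo)$. Crucially, that argument never uses monotonicity, so the paper actually proves the stronger statement (noted explicitly after its proof) that there are no nontrivial linear functionals on $\lc_{n}$ at all, whereas your sandwich $\delta_{x_{0},-\log f(x_{0})}\le f\le e^{-\ell}$ and the comparison $\delta_{x_{0},\left\langle a,x_{0}\right\rangle }\le g_{a}$ use monotonicity essentially, so you obtain the theorem as stated but not this strengthening. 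What your route buys is a self-contained argument on the log-concave side, avoiding the Legendre/support-function dictionary, and your closing heuristic about where the collapse comes from is apt; it is worth noticing that your idempotents are exactly dual to the paper's device, since $h_{g_{a}}=\oo_{\left\{ a\right\} }^{\infty}$, so both proofs are ultimately powered by the same degenerate objects, viewed on opposite sides of the duality.
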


In fact, the same theorem is true without the assumption that $F$
is increasing! The reason for such a disappointing result is that
the class of functions we are considering is too large. In Theorem
\ref{thm:riesz-bodies} we did not work with all closed convex sets,
but only with compact sets. In the same way we need to impose some
kind of ``compactness'' criterion on our log-concave functions.
One natural attempt is to work with coercive functions:
\begin{defn}
A convex function $\phi:\RR^{n}\to(-\infty,\infty]$ is called \emph{coercive}
if $\lim_{\left|x\right|\to\infty}\phi(x)=\infty$. A log-concave
function $f=e^{-\phi}$ is called coercive if $\phi$ is coercive.
We denote the classes of coercive convex and log-concave functions
by $\cvx_{n}^{c}$ and $\lc_{n}^{c}$ respectively. 
\end{defn}

Coercive log-concave functions are very well-behaved. For example,
every $f\in\lc_{n}^{c}$ satisfies $\int f<\infty$, and in fact decays
exponentially fast as $\left|x\right|\to\infty$. Moreover, for $f,g\in\lc_{n}^{c}$
the sup-convolution $f\star g$ is also in $\lc_{n}^{c}$, and in
particular is always finite and upper semi-continuous (see e.g. Lemma
2.3 of \cite{Hofstatter2021}). However, it turns out that the situation
for $\lc_{n}^{c}$ is not much better than the situation for $\lc_{n}$:
\begin{thm}
\label{thm:riesz-functions-c}Let $F:\lc_{n}^{c}\to\RR$ be a linear
and increasing functional. Then there exists $c\ge0$ such that $F(f)=c\cdot h_{f}(0)$
for all $f\in\lc_{n}$. 
\end{thm}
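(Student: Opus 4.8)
The plan is to pass to support functions, where the operations $\star$ and $\cdot$ become pointwise addition and scalar multiplication, and then to run a localization-and-squeezing argument exploiting the one distinctive feature of coercive functions: their support functions are finite, hence continuous, near the origin. Set $\mathcal{A}=\{\phi\in\cvx_{n}:\ 0\in\operatorname{int}(\dom\phi)\}$; this is a convex cone. First I would check that $f\mapsto h_{f}$ restricts to a bijection $\lc_{n}^{c}\to\mathcal{A}$: writing $f=e^{-\psi}$, coercivity of $\psi$ is equivalent to $0$ lying in the interior of $\dom\psi^{\ast}=\dom h_{f}$, a standard fact about the Legendre transform (if $\psi^{\ast}\le C$ on $B_{\delta}$ then $\psi=\psi^{\ast\ast}\ge\delta|\cdot|-C$ is coercive, and the converse is similar). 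Using the identity $h_{(\alpha\cdot f)\star(\beta\cdot g)}=\alpha h_{f}+\beta h_{g}$, the fact that $(\alpha\cdot f)\star(\beta\cdot g)$ stays in $\lc_{n}^{c}$, and the order-reversal of the Legendre transform (so that $h_{f}\ge h_{g}\iff -\log f\le-\log g\iff f\ge g$), the functional $\widetilde{F}\colon\mathcal{A}\to\RR$ defined by $\widetilde{F}(h_{f})=F(f)$ is well defined, is linear in the sense $\widetilde{F}(\alpha\phi+\beta\psi)=\alpha\widetilde{F}(\phi)+\beta\widetilde{F}(\psi)$ for $\phi,\psi\in\mathcal{A}$ and $\alpha,\beta>0$, and is increasing, i.e.\ $\phi\ge\psi$ implies $\widetilde{F}(\phi)\ge\widetilde{F}(\psi)$. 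It then suffices to prove that $\widetilde{F}(\phi)=c\,\phi(0)$ for some $c\ge0$ and all $\phi\in\mathcal{A}$.

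Next I would record a few elementary facts, writing $\mathbf{1}$ and $\mathbf{0}$ for the constant functions $1$ and $0$ (both in $\mathcal{A}$). Taking $\alpha=\beta=\lambda/2$ in the linearity identity gives $\widetilde{F}(\lambda\phi)=\lambda\widetilde{F}(\phi)$ for $\lambda>0$, hence $\widetilde{F}(\mathbf{0})=0$; since $\mathbf{1}\ge\mathbf{0}$, the number $c:=\widetilde{F}(\mathbf{1})$ is nonnegative; and splitting off a constant in the linearity identity (moving the term $t\mathbf{1}$ to whichever side keeps the coefficients positive) yields $\widetilde{F}(\phi+t\mathbf{1})=\widetilde{F}(\phi)+tc$ for every $\phi\in\mathcal{A}$ and $t\in\RR$.

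Now fix $\phi\in\mathcal{A}$. Since $\phi$ is continuous at $0$, for all small $r>0$ we have $\overline{B}_{r}\subseteq\operatorname{int}(\dom\phi)$, and the numbers $m_{r}:=\min_{\overline{B}_{r}}\phi$ and $M_{r}:=\max_{\overline{B}_{r}}\phi$ are finite with $m_{r},M_{r}\to\phi(0)$ as $r\to0^{+}$. Let $\iota_{\overline{B}_{r}}$ denote the convex indicator of $\overline{B}_{r}$ (equal to $0$ on $\overline{B}_{r}$ and $+\infty$ elsewhere), and set $\phi_{r}:=\phi+\iota_{\overline{B}_{r}}$, $\psi_{r}^{-}:=m_{r}\mathbf{1}+\iota_{\overline{B}_{r}}$, and $\psi_{r}^{+}:=M_{r}\mathbf{1}+\iota_{\overline{B}_{r}}$. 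Each of these is a proper lower semi-continuous convex function with domain $\overline{B}_{r}\ni0$, hence lies in $\mathcal{A}$; moreover $\phi_{r}=\phi$ on $\overline{B}_{r}$ and all three equal $+\infty$ off $\overline{B}_{r}$, so $\psi_{r}^{-}\le\phi_{r}\le\psi_{r}^{+}$ pointwise. By the constant-splitting identity (with $\iota_{\overline{B}_{r}}$ and $m_{r}$, resp.\ $M_{r}$, in the roles of $\phi$ and $t$) we get $\widetilde{F}(\psi_{r}^{-})=\widetilde{F}(\iota_{\overline{B}_{r}})+m_{r}c$ and $\widetilde{F}(\psi_{r}^{+})=\widetilde{F}(\iota_{\overline{B}_{r}})+M_{r}c$, while linearity gives $\widetilde{F}(\phi_{r})=\widetilde{F}(\iota_{\overline{B}_{r}})+\widetilde{F}(\phi)$. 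Applying monotonicity to $\psi_{r}^{-}\le\phi_{r}\le\psi_{r}^{+}$ and cancelling the common term $\widetilde{F}(\iota_{\overline{B}_{r}})$ yields $m_{r}c\le\widetilde{F}(\phi)\le M_{r}c$; letting $r\to0^{+}$ gives $\widetilde{F}(\phi)=c\,\phi(0)$, and translating back, $F(f)=\widetilde{F}(h_{f})=c\cdot h_{f}(0)$.

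The step I would expect to require the most care is the transfer to $\widetilde{F}$ and the attendant bookkeeping: one must be sure that each auxiliary function used in the squeezing argument — $\phi_{r}$, the $\psi_{r}^{\pm}$, and the constants — genuinely lies in $\mathcal{A}$, equivalently is the support function of a coercive log-concave function, so that the linearity and monotonicity of $F$ actually apply to it. Once that is in place the argument is immediate; note in particular that the value $\widetilde{F}(\iota_{\overline{B}_{r}})$ never has to be computed, since it occurs on both sides of the squeeze and cancels.
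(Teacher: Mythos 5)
Your proposal is correct and takes essentially the same route as the paper: reduce to support functions (convex functions finite in a neighborhood of $0$), then squeeze $\phi$ plus the convex indicator of a small ball between constants plus that same indicator, cancel the indicator term by linearity, and use continuity of $\phi$ at $0$. The only cosmetic difference is that you bound with $\min_{\overline{B}_{r}}\phi$ and $\max_{\overline{B}_{r}}\phi$ where the paper uses $\phi(0)\pm\epsilon$.
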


Instead, the correct class of functions one should consider is the
following:
\begin{defn}
A convex function $\phi:\RR^{n}\to(-\infty,\infty]$ is called \emph{super-coercive}
if 
\[
\lim_{\left|x\right|\to\infty}\frac{\phi(x)}{\left|x\right|}=\infty.
\]
A log-concave function $f=e^{-\phi}$ is called super-coercive if
$\phi$ is super-coercive. We denote the classes of super-coercive
convex and log-concave functions by $\cvx_{n}^{sc}$ and $\lc_{n}^{sc}$
respectively. 
\end{defn}

To understand in which sense functions $f\in\lc_{n}^{sc}$ are ``compact'',
note that a closed convex set $K\subseteq\RR^{n}$ is compact if and
only if $h_{K}<\infty$ everywhere on $\SS^{n-1}$. In the same way,
it is well known and not difficult to prove that $f\in\lc_{n}$ is
super-coercive if and only if $h_{f}<\infty$ everywhere on $\RR^{n}$.
This also shows that $\lc_{n}^{sc}$ is closed under sup-convolution.
The class of super-coercive convex functions is the one used by Colesanti,
Ludwig and Mussnig in \cite{Colesanti2020a} to prove a Hadwiger type
theorem. We will now see that on this class the Riesz representation
theorem is much more interesting:
\begin{thm}
\label{thm:riesz-functions-sc}Let $F:\lc_{n}^{sc}\to\RR$ be a linear
and increasing functional. Then there exists a unique positive and
finite Borel measure $\mu$ on $\RR^{n}$ with compact support such
that $F(f)=\int_{\RR^{n}}h_{f}\dd\mu$.
\end{thm}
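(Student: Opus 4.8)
The plan is to mirror, as closely as possible, the proof sketch of Theorem \ref{thm:riesz-bodies}, replacing the role of $C(\SS^{n-1})$ by an appropriate function space on $\RR^n$ and the embedding $K \mapsto h_K$ by $f \mapsto h_f$. First I would observe that, since every $\phi \in \cvx_n^{sc}$ is the support function $h_f$ of a unique $f \in \lc_n^{sc}$, and since $h_{(\alpha \cdot f)\star(\beta\cdot g)} = \alpha h_f + \beta h_g$, the functional $F$ transports to a functional $\widehat F$ on $\cvx_n^{sc}$ which is additive and positively homogeneous with respect to ordinary (pointwise) addition and scaling, and is monotone: if $\phi \le \psi$ pointwise then $f = e^{-\phi^\ast} \ge e^{-\psi^\ast} = g$ (the Legendre transform is order-reversing), so $\widehat F(\phi) \ge \widehat F(\psi)$ — wait, I need to be careful about the direction: $h_f \le h_g$ corresponds to $f \supseteq g$ in the sense $f \ge g$, which gives $F(f) \ge F(g)$, so $\widehat F$ is \emph{decreasing} on $\cvx_n^{sc}$; equivalently $\phi \mapsto \widehat F(-\phi)$ would be increasing, but it is cleaner to just carry the decreasing functional through. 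The key structural fact I would use is that $\cvx_n^{sc}$ is a convex cone whose difference set $E = \{\phi - \psi : \phi,\psi \in \cvx_n^{sc}\}$ is a dense subspace of a suitable space of continuous functions.

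The main obstacle, and the place where this proof genuinely diverges from the convex body case, is the choice of the ambient space and the continuity argument. On $\SS^{n-1}$ one uses the sup norm and the density of $C^2$; on $\RR^n$ the support functions $h_f$ can grow (indeed $h_f(y) \to \infty$ is allowed as $|y|\to\infty$, since super-coercivity of $f$ only forces $h_f$ finite, not bounded), so the sup norm is wrong. Instead I would work locally: fix a ball $B_R = \{|y|\le R\}$ and restrict everything to $B_R$. On $B_R$ the set $E_R = \{(\phi-\psi)|_{B_R} : \phi,\psi\in\cvx_n^{sc}\}$ contains all smooth functions on a neighborhood of $B_R$ (add a large multiple of $|y|^2$ to any smooth $g$ to make it convex, and note $C|y|^2 \in \cvx_n^{sc}$), hence is dense in $C(B_R)$. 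The functional $\widehat F$ descends to $E_R$: one must check well-definedness, i.e. that $\phi_1 - \psi_1 = \phi_2 - \psi_2$ on $B_R$ implies $\widehat F(\phi_1) - \widehat F(\psi_1) = \widehat F(\phi_2) - \widehat F(\psi_2)$. This is \emph{not} automatic because agreement only on $B_R$, not on all of $\RR^n$, is assumed — so the heart of the argument is a \emph{localization lemma}: if $\phi_1 - \psi_1 = \phi_2 - \psi_2$ on $B_R$, then after adding a common super-coercive correction term supported (in its deviation) outside $B_R$ one can arrange the supports of the relevant measures to live in $B_R$. Concretely I expect to prove directly that the measure $\mu$ produced by the classical Riesz theorem on each $C(B_R)$ is supported in some fixed ball independent of $R$, by testing $\widehat F$ against support functions $h_f$ that vanish on a large ball and blow up outside it, and using monotonicity to force $\mu$ to charge no mass there.

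Carrying this out, the sequence of steps is: (1) transport $F$ to a decreasing, additive, homogeneous $\widehat F$ on $\cvx_n^{sc}$ via the support-function dictionary, using $f = e^{-h_f^\ast}$ and the order-reversal of $\ast$; (2) for each $R$, define $\widehat F_R$ on the dense subspace $E_R \subseteq C(B_R)$, prove it is well-defined using the localization lemma (the crux), and observe it is linear and monotone hence $\|\cdot\|_\infty$-continuous, so it extends and Theorem \ref{thm:riesz-classic} yields a finite positive measure $\mu_R$ on $B_R$ with $\widehat F_R(g) = \int_{B_R} g \,\dd\mu_R$; (3) show the $\mu_R$ are consistent (the restriction of $\mu_{R'}$ to $B_R$ equals $\mu_R$ for $R' > R$, which follows from uniqueness in Theorem \ref{thm:riesz-classic} applied to $E_R$), so they glue to a single positive Borel measure $\mu$ on $\RR^n$; (4) prove $\mu$ has compact support: pick $f_0 \in \lc_n^{sc}$ with $h_{f_0} = 0$ on a ball $B_\rho$ and $h_{f_0}(y) = c(|y| - \rho)_+$ outside — such $f_0$ exists and is super-coercive for any $c>0$ — then $F(f_0) = c\int (|y|-\rho)_+ \,\dd\mu$ is a fixed finite number, forcing $\int_{|y|>\rho}(|y|-\rho)\,\dd\mu < \infty$ for that $\rho$; to upgrade "integrable tail" to "compact support" I would instead use a family $f_c$ with $h_{f_c}(y) = \max(0, c(|y|-\rho))$ for all $c$ and note $F(f_c) = F(c \cdot f_1') $-type homogeneity pins $\int_{|y|>\rho}(|y|-\rho)\,\dd\mu$ to be both finite and, by comparing two radii, zero — more carefully, choose $g_R \in \lc_n^{sc}$ with $h_{g_R} = \oo_{\RR^n \setminus B_R} \cdot (|y| - R)$; monotonicity in $R$ of these support functions and boundedness of $F$ on the relevant increasing family forces $\mu(\RR^n \setminus B_{R_0}) = 0$ for some $R_0$; (5) finally, $F(f) = \widehat F(h_f) = \lim_R \widehat F_R(h_f|_{B_R}) = \lim_R \int_{B_R} h_f \,\dd\mu_R = \int_{\RR^n} h_f\,\dd\mu$ (the last step using compact support of $\mu$, so the limit stabilizes), and uniqueness follows because $\{h_f : f \in \lc_n^{sc}\}$ spans a dense subspace of $C(\operatorname{supp}\mu)$. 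The step I expect to fight hardest with is (2), the well-definedness via localization, since it is exactly the point where "finite everywhere" (the defining feature of $\cvx_n^{sc}$) must be leveraged to show that the global functional is determined by local data.
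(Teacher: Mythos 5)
Your reduction to a pointwise-linear, monotone functional on the everywhere-finite convex functions is the right first move (note two slips there: the class of support functions of $\lc_{n}^{sc}$ is what the paper calls $\cvx_{n}^{F}$, not $\cvx_{n}^{sc}$, and the support map is order \emph{preserving} --- $f\le g$ iff $h_{f}\le h_{g}$ --- so the transported functional is increasing, not decreasing; with your sign you could never produce a positive $\mu$). The serious problem is step (2), which you yourself identify as the crux and never actually supply. The ``localization lemma'' is false for a fixed $R$: the functional $\widehat{F}(\phi)=\phi(x_{0})$ with $\left|x_{0}\right|>R$ is linear and increasing on $\cvx_{n}^{F}$ (it is $f\mapsto\int h_{f}\dd\delta_{x_{0}}$ on the log-concave side), and two finite convex functions can agree on $B_{R}$ while differing at $x_{0}$, so $\widehat{F}_{R}$ is simply not well defined on $E_{R}$. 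Well-definedness for a given $R$ holds precisely when the (yet to be constructed) representing measure lives in $B_{R}$, so the plan is circular. The paper sidesteps this entirely: Lemma \ref{lem:E-extend} extends $F$ \emph{globally} to $E=\cvx_{n}^{F}+C_{c}(\RR^{n})$, writing each $f\in C_{c}^{2}$ as a difference of finite convex functions via the auxiliary functions $\rho_{a}$, and then the classical Riesz theorem is applied on the locally compact space $\RR^{n}$ itself, with no restriction to balls. One then still has to prove $F(\phi)=\int\phi\dd\mu$ for all $\phi\in\cvx_{n}^{F}$, which requires a genuine two-sided argument (cutoffs $\phi\eta_{R}$ for the lower bound, and for the upper bound a convex $\psi$ growing faster than $\phi$ together with $\zeta_{R}=\max(R\phi-\psi,0)$, via Lemma \ref{lem:fast-growing}); in your scheme this step is hidden inside the unproved well-definedness.

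Step (4) also does not work as described: testing only against support functions of linear growth such as $c\left(\left|y\right|-\rho\right)_{+}$ cannot force compact support, since a measure with unbounded support but light tails (say geometrically decaying atoms at points $\left|x_{k}\right|=k$) makes all such integrals finite for every $c,\rho$, and homogeneity in $c$ gives no contradiction. Ruling such measures out requires convex test functions of \emph{arbitrarily fast} growth, which is exactly the content of Lemma \ref{lem:fast-growing}(2) in the paper: if $\mu$ is not compactly supported there is a finite convex $\phi$ with $\int\phi\dd\mu=+\infty$, contradicting the finiteness of $F$ on $\cvx_{n}^{F}$. So both the construction of $\mu$ and the compact-support claim have genuine gaps; the uniqueness part of your outline (differences of finite convex functions contain enough functions to invoke uniqueness in the classical Riesz theorem) is fine and matches the paper.
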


Theorem \ref{thm:riesz-functions-sc} seems to be the exact analogue
of Theorem \ref{thm:riesz-bodies}, and the best one could hope for.
However, as we will see in Section \ref{sec:surface-area} there are
useful applications where we want to allow $F$ to attain the value
$+\infty$ . In this case one can prove an interesting Riesz-type
theorem on the entire family $\lc_{n}$. In fact, one obtains not
only the family of functionals from Theorem \ref{thm:riesz-functions-sc}
but also a new surprising family of functionals: 
\begin{thm}
\label{thm:riesz-functions-general}Let $F:\lc_{n}\to(-\infty,\infty]$
be a linear and increasing functional. Assume further that:
\begin{enumerate}
\item There exists a function $f_{0}\in\lc_{n}$ such that $\int f_{0}>0$
and $F(f_{0})<\infty$. 
\item For every sequence $\left\{ f_{i}\right\} _{i=1}^{\infty}\subseteq\lc_{n}$
such that $f_{i}\uparrow f\in\lc_{n}$ we have $F(f_{i})\to F(f)$. 
\end{enumerate}
Then there exists a unique positive and finite Borel measure $\mu$
on $\RR^{n}$ with a finite first moment, and a unique positive and
finite Borel measure $\nu$ on $\SS^{n-1}$, such that
\begin{equation}
F(f)=\int_{\RR^{n}}h_{f}\dd\mu+\int_{\SS^{n-1}}h_{K_{f}}\dd\nu.\label{eq:riesz-func-repr}
\end{equation}
 Conversely, every functional of the form \eqref{eq:riesz-func-repr}
satisfies the assumptions of the theorem. 
\end{thm}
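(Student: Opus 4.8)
The plan is to reduce Theorem~\ref{thm:riesz-functions-general} to the already-established Theorem~\ref{thm:riesz-functions-sc} by carefully analyzing the behaviour of $F$ at ``infinity''. First I would verify the easy converse direction: if $\mu$ has a finite first moment then $\int h_f \,\dd\mu$ is well-defined in $(-\infty,\infty]$ for every $f\in\lc_n$ (since $h_f(x)\ge\langle x,x_0\rangle - \phi(x_0)$ is bounded below by an affine function, and $\int|x|\,\dd\mu<\infty$ controls the negative part), linearity and monotonicity are immediate from $h_{(\alpha\cdot f)\star(\beta\cdot g)}=\alpha h_f+\beta h_g$, assumption~(1) holds by taking $f_0$ a narrow Gaussian, and assumption~(2) follows from monotone convergence after noting $f_i\uparrow f$ forces $h_{f_i}\downarrow h_f$ pointwise. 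The term $\int h_{K_f}\,\dd\nu$, where $K_f$ should be the recession-type set $\{x : h_f(x)<\infty\}$ (equivalently the closed convex hull of $\dom f$, so that $h_{K_f}$ is its support function on $\SS^{n-1}$, taking values in $(-\infty,\infty]$), is handled the same way using Theorem~\ref{thm:riesz-bodies} for the measure $\nu$ on the sphere; one should check $h_{K_{(\alpha\cdot f)\star(\beta\cdot g)}}=\alpha h_{K_f}+\beta h_{K_g}$.

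For the main direction, I would restrict $F$ to the subclass $\lc_n^{sc}$. On super-coercive functions $h_f$ is finite everywhere and $K_f=\RR^n$, so the second term of \eqref{eq:riesz-func-repr} would have to be identically $0$ unless $\nu$ is forced by behaviour outside $\lc_n^{sc}$; in any case $F|_{\lc_n^{sc}}$ is linear and increasing. However $F$ may be $+\infty$ there, so I first need to show $F$ is finite on all of $\lc_n^{sc}$: using assumption~(1) to get one function $f_0$ with $F(f_0)<\infty$ and $\int f_0>0$, I can dominate an arbitrary $g\in\lc_n^{sc}$ by a dilate/translate combination built from $f_0$ (exploiting that super-coercive functions decay super-exponentially, hence $g\le C\cdot(\lambda\cdot f_0)$ after sup-convolving $f_0$ with a suitable narrow function) and conclude $F(g)<\infty$ by monotonicity and linearity. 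Then Theorem~\ref{thm:riesz-functions-sc} gives a unique finite compactly-supported measure $\mu$ on $\RR^n$ with $F(f)=\int h_f\,\dd\mu$ for all $f\in\lc_n^{sc}$.

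The heart of the argument is the extension from $\lc_n^{sc}$ to all of $\lc_n$, and isolating the new term $\int h_{K_f}\,\dd\nu$. Given an arbitrary $f\in\lc_n$ I would approximate it from below by super-coercive functions $f_i\uparrow f$: concretely $f_i=f\star(\text{narrow Gaussian of width }1/i)$ truncated so as to be super-coercive, or more simply $f_i=\min(f,\,i\cdot\text{(suitable tent)})$-type constructions; the point is that $f_i\in\lc_n^{sc}$ and $f_i\uparrow f$, so assumption~(2) gives $F(f)=\lim_i F(f_i)=\lim_i\int h_{f_i}\,\dd\mu_i$ where $\mu_i$ are the Theorem~\ref{thm:riesz-functions-sc}-measures for $F|_{\lc_n^{sc}}$ --- but these are all the \emph{same} measure $\mu$ by the uniqueness in that theorem, so $F(f)=\lim_i\int h_{f_i}\,\dd\mu$. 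Now $h_{f_i}\downarrow h_f$ pointwise, but $\mu$ from Theorem~\ref{thm:riesz-functions-sc} has compact support, so $\int h_{f_i}\,\dd\mu\to\int h_f\,\dd\mu$ would follow --- \emph{except} that $h_f$ can be $+\infty$ somewhere on the support of $\mu$, and the gap between $\lim\int h_{f_i}\,\dd\mu$ and $\int h_f\,\dd\mu$ (the ``mass escaping to the affine part at infinity'') is precisely what the second term $\int h_{K_f}\,\dd\nu$ must capture. The hard part will be showing this discrepancy is genuinely of the form $\int h_{K_f}\,\dd\nu$ for a \emph{single} finite measure $\nu$ on $\SS^{n-1}$ independent of $f$: I would define $\nu$ by testing $F$ on functions of the form $\oo_H\cdot(\text{Gaussian})$ where $H$ ranges over halfspaces (whose $K_f$ are halfspaces, with support functions that are $+\infty$ on an open hemisphere and linear on the boundary great-subsphere), extract the directional data, and then prove that the blow-up of $F$ along any sequence $f_i\uparrow f$ is controlled by the support function $h_{K_f}$ restricted to the directions where $\mu$ fails to ``see'' $f$. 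Reconciling the $\mu$-part (which lives on $\RR^n$ and needs only a finite first moment now, not compact support --- the compact-support conclusion of Theorem~\ref{thm:riesz-functions-sc} gets relaxed because on $\lc_n$ we can no longer probe arbitrarily far out with finite-valued $h_f$) with the $\nu$-part, and verifying that together they reproduce $F$ exactly via \eqref{eq:riesz-func-repr} with both uniqueness statements, is where the bulk of the technical work lies; the decomposition of a log-concave function into its ``bounded part'' and its ``recession/indicator part'' \`a la $f\le \oo_{K_f}$ and the corresponding splitting $h_f = h_f\cdot\oo_{\dom h_f} + (\text{recession behaviour})$ is the conceptual key.
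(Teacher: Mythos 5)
There is a genuine gap at the very first substantive step: you claim to show that $F$ is finite on all of $\lc_{n}^{sc}$ and then to apply Theorem \ref{thm:riesz-functions-sc} to the restriction. This cannot work, because it is false for exactly the functionals the theorem is trying to capture. If $F$ has the form \eqref{eq:riesz-func-repr} with $\nu\neq0$, then for any full-dimensional super-coercive $f$ (a Gaussian, say) we have $K_{f}=\RR^{n}$, hence $h_{K_{f}}\equiv+\infty$ and $F(f)=+\infty$; similarly, if $\mu$ merely has finite first moment but is not compactly supported, Lemma \ref{lem:fast-growing} produces $f\in\lc_{n}^{sc}$ with $\int h_{f}\dd\mu=+\infty$. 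So in both interesting cases $F|_{\lc_{n}^{sc}}$ is not real-valued and Theorem \ref{thm:riesz-functions-sc} does not apply; conversely, if your domination argument did prove finiteness on $\lc_{n}^{sc}$, it would force $\nu=0$ and $\mu$ compactly supported, i.e.\ your scheme can only ever reproduce the functionals already covered by Theorem \ref{thm:riesz-functions-sc} and can never generate the new $\nu$-term. The specific domination step also fails on its own terms: assumption (1) only guarantees some $f_{0}$ with $\int f_{0}>0$ and $F(f_{0})<\infty$, and such an $f_{0}$ may be compactly supported (indeed must be, essentially, when $\nu$ has full support), in which case no Gaussian is dominated by any finite combination of dilates and translates of $f_{0}$, so monotonicity gives no upper bound on $F$ of a Gaussian.

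The paper avoids this by restricting not to $\lc_{n}^{sc}$ but to the class of $f$ whose support function $h_{f}$ has \emph{linear growth} ($\cvx_{n}^{\ell}$ in the convex-function formulation). On that class assumption (1) really does yield finiteness of $F$ (Lemma \ref{lem:lin-finite}, using that $\int e^{-\phi_{0}^{\ast}}>0$ forces $\phi_{0}(x)\ge\frac{r}{2}|x|+\text{affine}$), the recession function $\bd{\phi}=h_{K_{f}}$ is finite and continuous on $\SS^{n-1}$, and one can split $F(\phi)=\int\phi\,\dd\mu+G(\phi)$ where $\mu$ comes from the classical Riesz theorem applied to an extension to $\cvx_{n}^{\ell}+C_{c}(\RR^{n})$, and $G$ is shown to depend only on $\bd{\phi}$, so that Theorem \ref{thm:riesz-bodies} produces $\nu$ (Theorem \ref{thm:repr-lin-growth}); the passage to all of $\lc_{n}$ then uses the Lipschitz regularizations of Lemma \ref{lem:lin-growth-approx} together with assumption (2). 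Two further slips in your write-up, minor by comparison: $f_{i}\uparrow f$ gives $h_{f_{i}}\uparrow h_{f}$ (not $\downarrow$), and $K_{f}$ is the closure of $\left\{ x:f(x)>0\right\}$, whose support function is the recession function of $h_{f}$, not the closure of $\left\{ x:h_{f}(x)<\infty\right\}$ (for $f=\oo_{K}$ the latter is all of $\RR^{n}$ while $K_{f}=K$).
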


To explain the notation used in the theorem, $K_{f}=\overline{\left\{ x\in\RR^{n}:f(x)>0\right\} }$
is the support of $f$. This is a closed convex set, so one may indeed
consider its support function $h_{K_{f}}:\SS^{n-1}\to(-\infty,\infty]$.
The notation $f_{i}\uparrow f$ simply means that $f_{1}\le f_{2}\le f_{3}\le\cdots$
and $\lim_{i\to\infty}f_{i}(x)=f(x)$ for all $x$. 

The extra assumptions in Theorem \ref{thm:riesz-functions-general}
are not just an artifact of our proof. We will see how removing any
of the two assumptions creates more linear and increasing functionals
that are usually not so interesting to consider. In practice it is
usually easy to verify that these extra conditions are satisfied in
specific applications. As will be clear from the proof, the same theorem
holds if $\lc_{n}$ is replaced with $\lc_{n}^{c}$ or $\lc_{n}^{sc}$. 

The rest of this paper is organized as follows: First, in Section
\ref{sec:degenerate} we give the short proofs of Theorems \ref{thm:riesz-functions-naive}
and \ref{thm:riesz-functions-c}. This can be done directly, without
appealing to the classical Riesz theorem. In Section \ref{sec:super-coercive}
we prove Theorem \ref{thm:riesz-functions-sc}. In Section \ref{sec:cvx-infinity}
we prove some simple results above the behavior of functions $\phi\in\cvx_{n}$
``at infinity''. These results will be needed for the proof of Theorem
\ref{thm:riesz-functions-general} in Section \ref{sec:non-finite}.
Finally in Section \ref{sec:surface-area} we connect our main theorems
to recent works about the surface area measures of log-concave functions.
We show that these works provide important examples of linear functionals
on $\lc_{n}$, and give an application of our results to this theory. 

\section{\label{sec:degenerate}The degenerate cases}

Each of our main theorems (Theorems \ref{thm:riesz-functions-naive},
\ref{thm:riesz-functions-c}, \ref{thm:riesz-functions-sc} and \ref{thm:riesz-functions-general})
can be reformulated in the language of convex functions. For example,
Theorem \ref{thm:riesz-functions-naive} is equivalent to the following:

\begin{manualtheorem}{\ref*{thm:riesz-functions-naive}$^\star$}\label{thm:conv-naive}

Let $F:\cvx_{n}\to\RR$ be linear and increasing. Then $F(\phi)=0$
for all $\phi\in\cvx_{n}$. 

\end{manualtheorem}

Here and everywhere else in the paper, a functional $F$ defined on
a domain $D\subseteq\cvx_{n}$ is called linear if it is linear with
respect to the usual pointwise addition: $F(\alpha\phi+\beta\psi)=\alpha F(\phi)+\beta F(\psi)$
for all $\phi,\psi\in D$ and all $\alpha,\beta>0$ such that $\alpha\phi+\beta\psi\in D$.

We now explain this equivalence by first proving Theorem \ref{thm:conv-naive}
and then using it to prove Theorem \ref{thm:riesz-functions-naive}:
\begin{proof}[Proof of Theorem \ref{thm:conv-naive}]
Write $c=F(\oo)$ where $\oo$ denotes the constant function. For
every $\phi\in\cvx_{n}$ and every $p\in\RR^{n}$ such that $\phi(p)<\infty$
we have 
\[
\phi+\oo_{\left\{ p\right\} }^{\infty}=\phi(p)\cdot\oo+\oo_{\left\{ p\right\} }^{\infty},
\]
 where 
\[
\oo_{K}^{\infty}(x)=\begin{cases}
0 & x\in K\\
+\infty & \text{Otherwise}
\end{cases}
\]
 denotes the convex indicator function. By linearity of $F$ we then
have 
\[
F(\phi)+F\left(\oo_{\left\{ p\right\} }^{\infty}\right)=\phi(p)\cdot F(\oo)+F\left(\oo_{\left\{ p\right\} }^{\infty}\right),
\]
 or $F(\phi)=c\cdot\phi(p)$.

If we now take for example $\phi(x)=\left|x\right|$ and fix $\theta\in\SS^{n-1}$
we get that $c=c\cdot\phi\left(\theta\right)=F(\phi)=c\cdot\phi(2\theta)=2c$.
Hence $c=0$, so $F(\phi)=0$ for all $\phi\in\cvx_{n}$. 
\end{proof}
As was mentioned in the introduction, we actually never use in the
proof the fact that $F$ is increasing. Therefore there are no non-trivial
linear functions $F:\cvx_{n}\to\RR$. 
\begin{proof}[Proof of Theorem \ref{thm:riesz-functions-naive}]
Assume $F:\lc_{n}\to\RR$ is linear and increasing. Define $G:\cvx_{n}\to\RR$
by $G(\phi)=F\left(e^{-\phi^{\ast}}\right)$. Since $F$ is increasing
and the Legendre transform is order reversing, $G$ is also increasing.
Moreover 
\begin{align*}
G(\alpha\phi+\beta\psi) & =F\left(e^{-\left(\alpha\phi+\beta\psi\right)^{\ast}}\right)=F\left(\left(\alpha\cdot e^{-\phi}\right)\star\left(\beta\cdot e^{-\psi}\right)\right)\\
 & =\alpha F(e^{-\phi})+\beta F(e^{-\psi})=\alpha G(\phi)+\beta G(\psi)
\end{align*}
 so $G$ is linear (to avoid confusion, recall that the notation $\cdot$
in expressions like $\alpha\cdot e^{-\phi}$ does not refer to the
pointwise multiplication but to the dilation defined in the introduction).

From Theorem \ref{thm:conv-naive} we deduce that $G\equiv0$. Therefore
for every $f\in\lc_{n}$ we have $F(f)=F\left(e^{-h_{f}^{\ast}}\right)=G(h_{f})=0$. 
\end{proof}
For Theorem \ref{thm:riesz-functions-c} the argument is similar.
The set of support functions $\left\{ h_{f}:\ f\in\lc_{n}^{c}\right\} $
is easily seen to be 
\[
\widetilde{\cvx}_{n}=\left\{ \phi\in\cvx_{n}:\ \phi\text{ is finite in a neighborhood of }0\right\} .
\]

Therefore Theorem \ref{thm:riesz-functions-c} will be equivalent
to the following:

\begin{manualtheorem}{\ref*{thm:riesz-functions-c}$^\star$}\label{thm:conv-c}

Let $F:\widetilde{\cvx}_{n}\to\RR$ be linear and increasing. Then
there exists $c\ge0$ such that $F(\phi)=c\cdot\phi(0)$ for all $\phi\in\widetilde{\cvx}_{n}$. 

\end{manualtheorem}
\begin{proof}
Write $c=F(\oo)$, and fix $\phi\in\widetilde{\cvx}_{n}$. Since $\phi$
is convex and finite in a neighborhood of $0$ it is continuous at
$0$. Therefore given $\epsilon>0$ there exists $r>0$ such that
$\left|\phi(x)-\phi(0)\right|<\epsilon$ for $x\in rB_{2}^{n}$, the
Euclidean Ball of radius $r$ centered at $0$. It follows that 
\[
\left(\phi(0)-\epsilon\right)\cdot\oo+\oo_{rB_{2}^{n}}^{\infty}\le\phi+\oo_{rB_{2}^{n}}^{\infty}\le\left(\phi(0)+\epsilon\right)\cdot\oo+\oo_{rB_{2}^{n}}^{\infty}.
\]
 Using the linearity and monotonicity of $F$ we see that 
\[
\left(\phi(0)-\epsilon\right)F(\oo)+F\left(\oo_{rB_{2}^{n}}^{\infty}\right)\le F(\phi)+F\left(\oo_{rB_{2}^{n}}^{\infty}\right)\le\left(\phi(0)+\epsilon\right)F(\oo)+F\left(\oo_{rB_{2}^{n}}^{\infty}\right),
\]
 or $\left|F\left(\phi\right)-c\phi(0)\right|\le c\epsilon$. Since
this is true for all $\epsilon>0$ we conclude that $F(\phi)=c\cdot\phi(0)$
as we wanted.
\end{proof}
Theorem \ref{thm:riesz-functions-c} follows from \ref{thm:conv-c}
in exactly the same way Theorem \ref{thm:riesz-functions-naive} follows
from \ref{thm:conv-naive}, so we will not repeat the argument. 

\section{\label{sec:super-coercive}A representation theorem for super-coercive
functions}

In this section we prove Theorem \ref{thm:riesz-functions-sc}. As
was explained in Section \ref{sec:introduction}, the fact that $f$
is super-coercive is equivalent to $h_{f}$ being everywhere finite.
Hence we define:
\begin{defn}
We denote by $\cvx_{n}^{F}$ the class of all convex functions $\phi\in\cvx_{n}$
such that $\phi(x)<\infty$ for all $x\in\RR^{n}$.
\end{defn}

Just like in Section \ref{sec:degenerate}, Theorem \ref{thm:riesz-functions-sc}
is an immediate corollary of the following Riesz type theorem for
$\cvx_{n}^{F}$:

\begin{manualtheorem}{\ref*{thm:riesz-functions-sc}$^\star$}\label{thm:riesz-cvxf}

Let $F:\cvx_{n}^{F}\to\RR$ be linear and increasing. Then there exists
a unique positive and finite Borel measure $\mu$ on $\RR^{n}$ with
compact support such that $F(\phi)=\int_{\RR^{n}}\phi\dd\mu$.

\end{manualtheorem}

Towards the proof we consider the space of functions 
\[
E=\left\{ \phi+f:\ \phi\in\cvx_{n}^{F}\text{ and }f\in C_{c}\left(\RR^{n}\right)\right\} .
\]
The first main step in the proof of Theorem \ref{thm:riesz-cvxf}
is the following result which extends functionals from $\cvx_{n}^{F}$
to $E$: 
\begin{lem}
\label{lem:E-extend}Let $F:\cvx_{n}^{F}\to\RR$ be a linear and increasing
functional. Then $F$ can be extended to a functional $F:E\to\RR$
which is again linear and increasing.
\end{lem}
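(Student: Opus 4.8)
The plan is to build the extension in two stages, first on the purely convex part and then incorporating the compactly supported part. First I would observe that every element of $E$ can be written as $\phi + f$ with $\phi \in \cvx_n^F$ and $f \in C_c(\RR^n)$, but this representation is far from unique; so the immediate task is to define $F(\phi+f)$ in a way that does not depend on the chosen decomposition. The natural guess is to set $F(\phi+f) = F(\phi) + \Lambda(f)$ for some linear functional $\Lambda$ on $C_c(\RR^n)$ that agrees with $F$ on differences of convex functions whenever such a difference happens to lie in $C_c(\RR^n)$. To produce $\Lambda$, the key point is that $C_c(\RR^n)$ is contained in the linear span of $\cvx_n^F$: any $f \in C_c(\RR^n)$ can be written as a difference $\psi_1 - \psi_2$ of two finite convex functions (for instance, add a large multiple of $|x|^2$ to $f$ on a ball and extend convexly, or more cleanly use that $C^2_c$ functions are differences of convex functions and then approximate). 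This lets me define $\Lambda(f) = F(\psi_1) - F(\psi_2)$, and the first thing to check is that $\Lambda$ is well defined: if $\psi_1 - \psi_2 = \chi_1 - \chi_2$ then $\psi_1 + \chi_2 = \chi_1 + \psi_2$ as genuine finite convex functions, and linearity of $F$ on $\cvx_n^F$ gives $F(\psi_1) + F(\chi_2) = F(\chi_1) + F(\psi_2)$, hence equality. Then $\Lambda$ is clearly linear on $C_c(\RR^n)$.

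Next I would verify that $\Lambda$ is \emph{increasing}, i.e. $f \ge 0 \Rightarrow \Lambda(f) \ge 0$: if $f = \psi_1 - \psi_2 \ge 0$ then $\psi_1 \ge \psi_2$ pointwise as finite convex functions, so $F(\psi_1) \ge F(\psi_2)$ by monotonicity of $F$. Now I can define the extension $F: E \to \RR$ by $F(\phi + f) := F(\phi) + \Lambda(f)$. To see this is well defined, suppose $\phi + f = \phi' + f'$ with $\phi,\phi' \in \cvx_n^F$ and $f, f' \in C_c(\RR^n)$; then $\phi - \phi' = f' - f \in C_c(\RR^n)$, and writing $\phi - \phi'$ both as an honest difference of convex functions and noting $\Lambda(\phi - \phi')$ computed via $F$ equals $F(\phi) - F(\phi')$ (here I use that $\phi, \phi' \in \cvx_n^F$ so $F(\phi)-F(\phi')$ is already defined and matches the $\Lambda$-value by the same cancellation argument as above), I get $F(\phi) - F(\phi') = \Lambda(f' - f) = \Lambda(f') - \Lambda(f)$, which rearranges to the required consistency. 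Linearity of the extended $F$ on $E$ is then immediate from linearity of $F$ on $\cvx_n^F$ and linearity of $\Lambda$; note $E$ is itself a cone closed under the relevant additions, so this makes sense.

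The remaining point, and the one I expect to be the genuine obstacle, is showing the extended $F$ is \emph{increasing} on $E$: given $\phi + f \le \phi' + f'$ in $E$ I must deduce $F(\phi) + \Lambda(f) \le F(\phi') + \Lambda(f')$, equivalently $\Lambda\big((\phi' - \phi) + (f' - f)\big) \ge 0$ where $g := (\phi' - \phi) + (f' - f) \ge 0$ but $g$ need not be convex and need not have compact support (since $\phi' - \phi$ need not be compactly supported). The resolution is that $g \in E$ and $g \ge 0$; I want to argue any nonnegative element of $E$ has nonnegative $F$-value. For this I would split: write $g = \chi + h$ with $\chi \in \cvx_n^F$, $h \in C_c(\RR^n)$. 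Outside a large ball $RB_2^n$ containing $\operatorname{supp} h$ we have $\chi = g \ge 0$. The trick is to modify $\chi$: since $\chi$ is a finite convex function that is nonnegative outside $RB_2^n$, I can find a finite convex function $\chi_0 \le \chi$ with $\chi_0 \ge 0$ everywhere and $\chi_0 = \chi$ outside a slightly larger ball (e.g. take the convex hull construction / lower envelope of $\chi$ and $0$ suitably, or subtract off a compactly supported bump), so that $\chi - \chi_0 \in C_c(\RR^n)$, $\chi_0 \ge 0$, and $g = \chi_0 + \big((\chi - \chi_0) + h\big)$ with $\chi_0 \in \cvx_n^F$ nonnegative and $(\chi - \chi_0) + h \in C_c(\RR^n)$. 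Then $F(g) = F(\chi_0) + \Lambda\big((\chi-\chi_0)+h\big) \ge 0$: the first term is $\ge 0$ because $\chi_0 \ge 0 = \mathbf{0}$ and $F$ is increasing on $\cvx_n^F$ with $F(\mathbf 0) \ge 0$ (indeed $F(\mathbf 0) = F(\mathbf 0) + F(\mathbf 0)$ forces $F(\mathbf 0) = 0$, so $F(\chi_0) \ge 0$), and the second term is $\ge 0$ provided $(\chi - \chi_0) + h \ge 0$ — but that is exactly $g - \chi_0 \ge 0$ since $g \ge 0 \ge \chi_0$ fails... so one must instead choose $\chi_0$ with $\chi_0 \le g$, which is possible because $g \ge 0$ forces $\chi = g - h \ge -h$, and one can take $\chi_0$ to be a finite convex minorant of both $\chi$ and $g$ that is nonnegative and agrees with $\chi$ at infinity; then $g - \chi_0 \ge 0$ and $g - \chi_0 \in C_c(\RR^n)$, giving $\Lambda(g-\chi_0)\ge 0$. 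Making this minorant construction precise — producing a finite convex $\chi_0$ with $0 \le \chi_0 \le \min(\chi, g)$, $\chi_0$ convex, and $\chi - \chi_0$ compactly supported — is the technical heart of the lemma; everything else is bookkeeping with linearity and the well-definedness cancellations.
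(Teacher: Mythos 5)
There is a genuine gap at the foundation of your construction: you define $\Lambda$ on all of $C_{c}(\RR^{n})$ by writing an arbitrary $f\in C_{c}(\RR^{n})$ as a difference $\psi_{1}-\psi_{2}$ of finite convex functions, but this is not possible in general. A difference of convex functions is locally Lipschitz and differentiable almost everywhere, whereas a generic continuous compactly supported function (say, a compactly supported nowhere-differentiable function) is neither; so $C_{c}(\RR^{n})$ is \emph{not} contained in the linear span of $\cvx_{n}^{F}$. Your two suggested fixes do not close this: adding a large multiple of $\left|x\right|^{2}$ only convexifies $f$ when $f$ has some second-order regularity (this is exactly why the paper restricts first to $C_{c}^{2}(\RR^{n})$, where $f+\rho_{a}$ is convex for large $a$), and ``approximate'' does not make $f$ itself a difference of convex functions — to pass from $C_{c}^{2}$ to $C_{c}$ you need to know that the extended functional is continuous in the supremum norm, which is precisely the nontrivial point the paper proves (from linearity and monotonicity one gets $\left|F(u)-F(v)\right|\le\left\Vert u-v\right\Vert _{\infty}F(\oo)$) before extending by density and then re-verifying monotonicity via one-sided $C^{2}$ approximations. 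Without that continuity step your $\Lambda$ is simply undefined on most of $C_{c}(\RR^{n})$.

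Separately, your treatment of monotonicity is both harder than necessary and, as written, inconsistent. Within the class where $f,f'$ are differences of convex functions, monotonicity follows directly by moving all subtracted terms to the other side: $\phi+\psi_{1}-\psi_{2}\le\phi'+\psi_{1}'-\psi_{2}'$ is equivalent to $\phi+\psi_{1}+\psi_{2}'\le\phi'+\psi_{1}'+\psi_{2}$, an inequality between genuine elements of $\cvx_{n}^{F}$, so no ``nonnegative elements of $E$'' lemma is needed (this is essentially what the paper does on $\widetilde{E}$). Your proposed minorant construction is also flawed: the requirement $0\le\chi_{0}\le\min(\chi,g)$ is impossible whenever $\chi$ takes negative values (which it may, since only $g=\chi+h\ge0$ is assumed), and the corrected requirement ($\chi_{0}$ convex, $0\le\chi_{0}\le g$, $\chi-\chi_{0}$ compactly supported) is not established — the natural candidate, the convex envelope of $g$, only satisfies $\chi-\left\Vert h\right\Vert _{\infty}\le\chi_{0}\le\chi$ off a ball, so $\chi-\chi_{0}$ is bounded but need not be compactly supported. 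The real remaining difficulty, which your plan does not address, is monotonicity for merely continuous $f$, and there the paper's route (sup-norm Lipschitz bound plus approximation of $f_{1}$ from below and $f_{2}$ from above by $C_{c}^{2}$ functions) is the missing ingredient.
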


\begin{proof}
Our first step is to extend $F$ to the smaller space 
\[
\widetilde{E}=\left\{ \phi+f:\ \phi\in\cvx_{n}^{F}\text{ and }f\in C_{c}^{2}\left(\RR^{n}\right)\right\} ,
\]
 where $C_{c}^{2}(\RR^{n})$ denotes the $C^{2}$-smooth compactly
supported functions. 

Towards this goal we define $\rho_{a}\in\cvx_{n}^{F}$ for every $a>0$
by 
\[
\rho_{a}(x)=\begin{cases}
a\frac{\left|x\right|^{2}}{2} & \left|x\right|\le a\\
a^{2}\left|x\right|-\frac{a^{3}}{2} & \left|x\right|\ge a.
\end{cases}
\]
We claim that if $f\in C_{c}^{2}(\RR^{n})$ then $f+\rho_{a}\in\cvx_{n}^{F}$
for large enough $a$. Indeed, assume $f$ is supported on $B(0,r)$,
an open ball of radius $r>0$ around the origin. Since the Hessian
$\nabla^{2}f$ is continuous and compactly supported there exists
$m>0$ such that $\nabla^{2}f\succeq-m\cdot Id$ in the sense of positive
definite matrices. Choose $a>\max(r,m)+1$. Then for every $x\in B(0,a)$
we have 
\[
\nabla^{2}\left(f+\rho_{a}\right)(x)=\nabla^{2}f(x)+\nabla^{2}\rho_{a}(x)=\nabla^{2}f(x)+a\cdot Id\succeq(a-m)\cdot Id\succeq Id,
\]
 so $f+\rho_{a}$ is convex in a neighborhood of $x$. If on the other
hand $x\notin B(0,r+\frac{1}{2})$ then $f+\rho_{a}=\rho_{a}$ in
a neighborhood of $x$. It follows that $f+\rho_{a}$ is convex in
a neighborhood of every point of $\RR^{n}$, so it is convex.

We now extend $F$ to $\widetilde{E}$ by setting 
\[
F(\phi+f)=F(\phi)+F\left(f+\rho_{a}\right)-F(\rho_{a})
\]
 for some $a>0$ such that $f+\rho_{a}\in\cvx_{n}^{F}$. To see that
this is well-defined, fix $\phi_{1},\phi_{2}\in\cvx_{n}^{F}$, $f_{1},f_{2}\in C_{c}^{2}(\RR^{n})$
and $a,b>0$ such that $\phi_{1}+f_{1}=\phi_{2}+f_{2}$ and $f_{1}+\rho_{a},f_{2}+\rho_{b}\in\cvx_{n}^{F}$.
Then by linearity of $F$ on $\cvx_{n}^{F}$ we have 
\begin{align*}
F\left(\phi_{1}\right)+F\left(f_{1}+\rho_{a}\right)+F(\rho_{b}) & =F\left(\phi_{1}+f_{1}+\rho_{a}+\rho_{b}\right)=F(\phi_{2}+f_{2}+\rho_{b}+\rho_{a})\\
 & =F(\phi_{2})+F(f_{2}+\rho_{b})+F(\rho_{a}),
\end{align*}
 so indeed 
\[
F\left(\phi_{1}\right)+F\left(f_{1}+\rho_{a}\right)-F(\rho_{a})=F(\phi_{2})+F(f_{2}+\rho_{b})-F(\rho_{b}).
\]

Next, to show that $F$ is linear on $\widetilde{E}$, fix $\phi_{1},\phi_{2}\in\cvx_{n}^{F}$
, $f_{1},f_{2}\in C_{c}^{2}(\RR^{n})$ and $\alpha,\beta>0$. Choose
$a>0$ such that $f_{1}+\rho_{a},f_{2}+\rho_{a},\alpha f_{1}+\beta f_{2}+\rho_{a}\in\cvx_{n}^{F}$.
Then using the linearity of $F$ on $\cvx_{n}^{F}$ we can compute:
\begin{align*}
F\left(\alpha(\phi_{1}+f_{1})+\beta\left(\phi_{2}+f_{2}\right)\right) & =F\left(\left(\alpha\phi_{1}+\beta\phi_{2}\right)+\left(\alpha f_{1}+\beta f_{2}\right)\right)\\
 & =F(\alpha\phi_{1}+\beta\phi_{2})+F(\alpha f_{1}+\beta f_{2}+\rho_{a})-F(\rho_{a})\\
 & =F(\alpha\phi_{1}+\beta\phi_{2})+F(\alpha f_{1}+\beta f_{2}+\rho_{a}+\alpha\rho_{a}+\beta\rho_{a})-(\alpha+\beta+1)F(\rho_{a})\\
 & =F(\alpha\phi_{1}+\beta\phi_{2})+F\left(\alpha\left(f_{1}+\rho_{a}\right)+\beta\left(f_{2}+\rho_{a}\right)\right)-(\alpha+\beta)F(\rho_{a})\\
 & =\alpha\left(F(\phi_{1})+F(f_{1}+\rho_{a})-F(\rho_{a})\right)+\beta\left(F(\phi_{2})+F(f_{2}+\rho_{a})-F(\rho_{a})\right)\\
 & =\alpha F(\phi_{1}+f_{1})+\beta F(\phi_{2}+f_{2}),
\end{align*}
 which shows that $F$ is linear on $\widetilde{E}$.

Similarly we show that $F$ is increasing on $\widetilde{E}$: If
$\phi_{1}+f_{1}\le\phi_{2}+f_{2}$ we can choose $a>0$ such that
$f_{1}+\rho_{a},f_{2}+\rho_{a}\in\cvx_{n}^{F}$ and then 
\begin{align*}
F(\phi_{1}+f_{1}) & =F(\phi_{1})+F(f_{1}+\rho_{a})-F(\rho_{a})=F(\phi_{1}+f_{1}+\rho_{a})-F(\rho_{a})\\
 & \le F(\phi_{2}+f_{2}+\rho_{a})-F(\rho_{a})=F(\phi_{2}+f_{2}).
\end{align*}

Our next step is to extend $F$ from $\widetilde{E}$ to $E$. Since
$F$ is linear and increasing it is also continuous with respect to
the supremum norm, in the sense that
\[
\left|F(\phi_{1}+f_{1})-F(\phi_{2}+f_{2})\right|\le\left\Vert \left(\phi_{1}+f_{1}\right)-\left(\phi_{2}+f_{2}\right)\right\Vert _{\infty}\cdot F(\oo).
\]
Here $\oo\in\cvx_{n}^{F}$ denotes the constant function. Of course
the right hand side may be equal to $+\infty$, in which case the
claim is trivial. Since $\widetilde{E}$ is dense in $E$ it follows
that $F$ can be uniquely extended to a continuous linear functional
on $E$. 

It only remains to show that $F$ is increasing on $E$. To this end
note that for every $f\in C_{c}\left(\RR^{n}\right)$ one can find
a sequence $\left\{ g_{i}\right\} _{i=1}^{\infty}\subseteq C_{c}^{2}(\RR^{n})$
such that $g_{i}\to f$ uniformly and $g_{i}\ge f$ for all $i$ (or
$g_{i}\le f$ for all $i$). Assume now that $\phi_{1}+f_{1},\phi_{2}+f_{2}\in E$
and $\phi_{1}+f_{1}\le\phi_{2}+f_{2}$. Choose a sequence $\left\{ g_{i}\right\} _{i=1}^{\infty}\subseteq C_{c}^{2}(\RR^{n})$
approximating $f_{1}$ from below and a sequence $\left\{ h_{i}\right\} _{i=1}^{\infty}\subseteq C_{c}^{2}(\RR^{n})$
approximating $f_{2}$ from above. Then 
\[
F(\phi_{1}+f_{1})=\lim_{i\to\infty}F(\phi_{1}+g_{i})\le\lim_{i\to\infty}F(\phi_{2}+h_{i})=F(\phi_{2}+f_{2}),
\]
finishing the proof. 
\end{proof}
The reader may wonder about the choice of the functions $\rho_{a}$
in the proof above. It appears that a simpler choice such as $\rho_{a}(x)=a\frac{\left|x\right|^{2}}{2}$
would work just as well and slightly simplify the proof. This is correct,
but in Section \ref{sec:non-finite} we will claim that the proof
above can also serve as proof of Lemma \ref{lem:El-extend}, and there
such simpler choices will not be possible.

To proceed we will also need the following lemma about fast growing
convex function. Similar statements have undoubtedly appeared in the
literature before, but as we were unable to find a suitable reference
we provide the proof: 
\begin{lem}
\label{lem:fast-growing}
\begin{enumerate}
\item For every $\phi\in\cvx_{n}^{F}$ there exists $\psi\in\cvx_{n}^{F}$
such that $\psi\ge0$ and $\lim_{\left|x\right|\to\infty}\frac{\psi(x)}{\phi(x)}=+\infty$.
\item Let $\mu$ be a positive Borel measure on $\RR^{n}$ which is not
compactly supported. Then there exists $\phi\in\cvx_{n}^{F}$ with
$\int_{\RR^{n}}\phi\dd\mu=+\infty$. 
\end{enumerate}
\end{lem}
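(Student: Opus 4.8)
For part (1), the plan is to build $\psi$ by hand on concentric annuli. Fix $\phi\in\cvx_n^F$. The key observation is that a finite convex function on $\RR^n$ is bounded on bounded sets, so $M(r):=\max_{|x|\le r}\phi(x)<\infty$ for every $r>0$, and $M$ is a nondecreasing convex function of $r$. I would pick a sequence of radii $r_k\uparrow\infty$ (say $r_k=k$) and choose slopes $a_k\uparrow\infty$ growing fast enough that the piecewise-linear radial function with these slopes dominates $k\cdot M(r)$ for $r\ge r_k$; concretely, define $u(r)$ to be the supremum of finitely many affine functions $a_k r + b_k$ where $b_k$ is chosen so the graph passes above the point $(r_k, k M(r_k))$ and so that consecutive pieces match up (choosing $a_k$ increasing guarantees $u$ is convex as a sup of affine functions). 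Then set $\psi(x)=\max(u(|x|),0)$ — this is convex because $|x|\mapsto u(|x|)$ is a nondecreasing convex function of $|x|$ composed with the convex function $x\mapsto|x|$, hence convex, and the max with $0$ preserves convexity. By construction $\psi(x)\ge a_k|x|+b_k$ for all $x$, so $\psi$ is finite everywhere, and for $|x|$ large (say $|x|\ge r_{k}$) we get $\psi(x)\ge k\,M(|x|)\ge k\,\phi(x)$, giving $\psi/\phi\to\infty$. One technical point to handle: if $\phi$ is bounded below by a negative constant or takes negative values, dividing by $\phi$ is delicate; I would first note we may assume $\phi\ge 1$ after replacing $\phi$ by $\max(\phi,1)+\text{(something)}$, or simply phrase the growth condition as $\psi(x)\ge k\cdot(|\phi(x)|+1)$ eventually, which still forces the stated limit wherever $\phi(x)\to\infty$ and is vacuous otherwise — but since $\phi\in\cvx_n^F$ need not be coercive, I should be slightly careful and use $|\phi(x)|+1$ in place of $\phi(x)$ throughout the construction.

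For part (2), the plan is to use that $\mu$ not compactly supported means there is an unbounded increasing sequence of radii $R_k\uparrow\infty$ with $\mu\big(\{x:|x|\ge R_k\}\big)>0$ for every $k$; by taking differences of shells I can find disjoint annuli $A_k=\{R_k\le |x|< R_{k+1}\}$ with $\mu(A_k)>0$ (shrinking $R_k$ to a subsequence if needed so infinitely many annuli have positive mass — possible precisely because the mass outside every ball is positive). Then I want a single $\phi\in\cvx_n^F$ which is at least $c_k$ on $A_k$ with $c_k\,\mu(A_k)\to\infty$, or rather $\sum_k c_k\mu(A_k)=\infty$. This is the same construction as in part (1): a piecewise-linear-in-$|x|$ convex function with rapidly increasing slopes can be made to satisfy $\phi(x)\ge c_k$ for $|x|\ge R_k$ where $c_k$ is any prescribed increasing sequence, e.g. $c_k=k/\mu(A_k)$ if $\mu(A_k)\le 1$, adjusting so that $c_k$ is nondecreasing. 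Then $\int_{\RR^n}\phi\,\dd\mu\ge\sum_k c_k\,\mu(A_k)=\sum_k k=\infty$ (after the adjustment, $\ge\sum_k k$ or at least a divergent series). Convexity and finiteness of $\phi$ follow exactly as before since it is a sup of countably many affine functions $a_k|x|+b_k$ whose slopes we control, and one checks the sup is finite at each point because only the relevant nearby pieces matter.

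The main obstacle I anticipate is purely bookkeeping rather than conceptual: getting the piecewise-linear radial profile to be genuinely convex while simultaneously dominating the prescribed values, which forces a consistent choice of the intercepts $b_k$ once the slopes $a_k$ are fixed and requires verifying that the resulting sup of affine functions stays finite everywhere (it does, since on any ball $|x|\le R$ only the pieces with $a_k|x|+b_k$ not eventually dominated contribute, and the $b_k$ decrease — this is where one has to be mildly careful). A second minor obstacle in part (2) is the extraction of the annuli with positive mass: I need to rule out the degenerate possibility that the mass "escapes to infinity continuously" in a way that no fixed annulus carries positive mass, but this cannot happen because $\{|x|\ge R_1\}=\bigcup_k A_k$ up to the single shell structure, and a countable union of null sets is null, contradicting $\mu(\{|x|\ge R_1\})>0$. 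Once these two points are dispatched the rest is routine.
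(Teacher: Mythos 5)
Your construction is essentially the paper's own proof: in both parts the paper builds a radial convex function $\rho(|x|)$, piecewise linear with rapidly growing (convex, increasing) node values, prescribed in part (1) to exceed $k\cdot\max\{\phi(x):|x|\le k+1\}$ at radius $k$ and in part (2) to exceed $1/\mu(\{k\le|x|<k+1\})$ on the annuli of positive mass, which is exactly your slopes-and-intercepts scheme. The only nuance worth noting is that anchoring the node at $(r_k,\,k\,M(r_k))$ alone is not enough, since $\phi$ may grow enormously between $r_k$ and $r_{k+1}$; your ``slopes fast enough'' proviso does fix this (the needed slope is controlled by $M(r_{k+1})$, using convexity of $M$), and the paper sidesteps the issue entirely by using the maximum over the larger ball $\{|x|\le k+1\}$ at each node.
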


\begin{proof}
Both parts of the lemma rely on the same principle: If $\left\{ a_{k}\right\} _{k=1}^{\infty}$
is an arbitrary sequence of real numbers, then one can find an increasing
convex function $\rho:[0,\infty)\to[0,\infty)$ such that $\rho(k)>a_{k}$
for all $k\in\NN$.

To prove this principle define a sequence $\left\{ b_{k}\right\} _{k=0}^{\infty}$
inductively by setting $b_{0}=1$ and 
\[
b_{k+1}=\max\left\{ 2b_{k},a_{k+1}\right\} +1
\]
 for $k\ge1$. Clearly $b_{k}>\max\left\{ a_{k},0\right\} $ for all
$k\ge1$, and since
\[
\frac{b_{k+1}+b_{k-1}}{2}\ge\frac{b_{k+1}}{2}>b_{k}
\]
the sequence $\left\{ b_{k}\right\} _{k=0}^{\infty}$ is convex and
increasing. Define $\rho:[0,\infty)\to[0,\infty)$ by setting $\rho(k)=b_{k}$
and extending $\rho$ to be linear on any interval of the form $[k,k+1]$.
Then $\rho$ is the required function. 

We can now prove the two parts of the lemma:
\begin{enumerate}
\item Given $\phi\in\cvx_{n}^{F}$ we choose $\rho:[0,\infty)\to[0,\infty)$
to be convex and increasing and satisfy 
\[
\rho(k)\ge k\cdot\max\left\{ \phi(x):\ \left|x\right|\le k+1\right\} 
\]
 for all $k\in\NN$. Define $\psi\in\cvx_{n}^{F}$ by $\psi(x)=\rho\left(\left|x\right|\right)$.
Then for every $x\in\RR^{n}$ such that $k\le\left|x\right|\le k+1$
we have 
\[
\frac{\psi(x)}{\phi(x)}\ge\frac{\rho(k)}{\phi(x)}\ge\frac{k\cdot\phi(x)}{\phi(x)}=k,
\]
 so $\lim_{\left|x\right|\to\infty}\frac{\psi(x)}{\phi(x)}=+\infty$.
\item Given $\mu$ we set $a_{k}=\mu\left(\left\{ x\in\RR^{n}:\ k\le\left|x\right|<k+1\right\} \right)$
and $I=\left\{ k\in\NN:\ a_{k}>0\right\} $. Since $\mu$ is not compactly
supported, $I$ must be infinite. We choose $\rho:[0,\infty)\to[0,\infty)$
to satisfy 
\[
\rho(k)\ge\begin{cases}
\frac{1}{a_{k}} & \text{If }a_{k}>0\\
0 & \text{otherwise},
\end{cases}
\]
 and define $\phi\in\cvx_{n}^{F}$ by $\phi(x)=\rho\left(\left|x\right|\right)$.
Then 
\[
\int\phi\dd\mu\ge\sum_{k=1}^{\infty}\int_{k\le\left|x\right|<k+1}\phi\dd\mu\ge\sum_{k=1}^{\infty}a_{k}\rho(k)\ge\sum_{k\in I}1=+\infty,
\]
 finishing the proof. 
\end{enumerate}
\end{proof}
We can now prove Theorem \ref{thm:riesz-cvxf}:
\begin{proof}[Proof of Theorem \ref{thm:riesz-cvxf}]
Assume $F:\cvx_{n}^{F}\to\RR$ is linear and increasing. By Lemma
\ref{lem:E-extend} $F$ can be extended to a linear increasing functional
on $E$ which we also denote by $F$. In particular the restriction
$\left.F\right|_{C_{c}(\RR^{n})}$ satisfies the assumptions of the
classical Riesz theorem (Theorem \ref{thm:riesz-classic}). Hence
there exists a unique positive Radon measure $\mu$ on $\RR^{n}$
such that $F(f)=\int f\dd\mu$ for all $f\in C_{c}(\RR^{n})$. Our
main goal is to show that the same formula holds for every $\phi\in\cvx_{n}^{F}$.

Fix $\phi\in\cvx_{n}^{F}$ and assume first that $\phi\ge0$. For
every $R>0$ let $\eta_{R}\in C_{c}\left(\RR^{n}\right)$ be a function
such that $0\le\eta_{R}\le1$, $\eta_{R}\equiv1$ on the ball $B(0,R)$,
and $\eta_{R}$ is compactly supported. Since $\phi\ge\phi\eta_{R}$
we have 
\[
F(\phi)\ge F\left(\phi\eta_{R}\right)=\int_{\RR^{n}}\phi\eta_{R}\dd\mu\ge\int_{B(0,R)}\phi\dd\mu,
\]
 where the middle equality holds since $\phi\eta_{R}\in C_{c}\left(\RR^{n}\right)$.
Letting $R\to\infty$ and using the monotone convergence theorem we
see that $F(\phi)\ge\int\phi\dd\mu$.

For the reverse inequality, we use Lemma \ref{lem:fast-growing} and
choose $\psi\in\cvx_{n}^{F}$ such that $\psi\ge0$ and $\lim_{\left|x\right|\to\infty}\frac{\psi(x)}{\phi(x)}=+\infty$.
This implies that for every $R>0$ the function 
\[
\zeta_{R}(x)=\max\left\{ R\phi(x)-\psi(x),0\right\} 
\]
 is compactly supported. Since $R\phi-\psi\le\zeta_{R}\le R\phi$
we have $\phi\le\frac{1}{R}\left(\psi+\zeta_{R}\right)$ and both
sides are in $E$. By linearity and monotonicity of $F$ we have
\begin{align*}
F(\phi) & =F\left(\frac{\psi+\zeta_{R}}{R}\right)\le\frac{1}{R}F(\psi)+\frac{1}{R}F(\zeta_{R})=\frac{1}{R}F(\psi)+\frac{1}{R}\int\zeta_{R}\dd\mu\\
 & \le\frac{1}{R}F(\psi)+\frac{1}{R}\int R\phi\dd\mu=\frac{1}{R}F(\psi)+\int\phi\dd\mu.
\end{align*}
 Letting $R\to\infty$ we conclude that $F(\phi)\le\int\phi\dd\mu$,
finishing the proof in the case $\phi\ge0$. 

If $\phi\in\cvx_{n}^{F}$ is not necessarily positive, we use the
fact that for every $\phi\in\cvx_{n}^{F}$ there exists $\alpha,\beta>0$
such that $\phi(x)\ge-\alpha\left|x\right|-\beta$ for all $x$. Since
$\phi+\alpha\left|x\right|+\beta\ge0$ we have 
\begin{align*}
F(\phi) & =F(\phi+\alpha\left|x\right|+\beta)-F\left(\alpha\left|x\right|+\beta\right)\\
 & =\int\left(\phi+\alpha\left|x\right|+\beta\right)\dd\mu-\int\left(\alpha\left|x\right|+\beta\right)\dd\mu=\int\phi\dd\mu
\end{align*}
 finishing the proof that $F(\phi)=\int\phi\dd\mu$ for all $\phi\in\cvx_{n}$.

In particular we have $\int\dd\mu=F\left(\oo\right)<\infty$, so $\mu$
must be finite. Moreover, since $F(\phi)<\infty$ for all $\phi\in\cvx_{n}^{F}$
it follows from Lemma \ref{lem:fast-growing} that $\mu$ is compactly
supported. 

It only remains to show that $\mu$ is unique. Assume $\int\phi\dd\mu_{1}=\int\phi\dd\mu_{2}$
for all $\phi\in\cvx_{n}^{F}$. As we saw in the proof of Lemma \ref{lem:E-extend},
every $f\in C_{c}^{2}(\RR^{n})$ can be written as a difference $f=\phi_{1}-\phi_{2}$
for $\phi_{1},\phi_{2}\in\cvx_{n}^{F}$. Therefore $\int f\dd\mu_{1}=\int f\dd\mu_{2}$
for all $f\in C_{c}^{2}(\RR^{n})$, and by approximation the same
holds for every $f\in C_{c}(\RR^{n})$. It then follows from the classical
Riesz theorem that $\mu_{1}=\mu_{2}$. 
\end{proof}
The proof of Theorem \ref{thm:riesz-functions-sc} from Theorem \ref{thm:riesz-cvxf}
works in the usual way, so we will not repeat the argument.

\section{\label{sec:cvx-infinity}Behavior at infinity of convex functions}

We now turn our attention to Theorem \ref{thm:riesz-functions-general}.
In this section we collect some properties of convex functions that
we will need for the proof. As before, we want to restate the theorem
in the language of pointwise linear functionals on convex functions.
However, Theorem \ref{thm:riesz-functions-general} involves not only
$h_{f}$ but also $h_{K_{f}}$, where $K_{f}$ is the support of $f$.
For this reason we need to know how to recover $h_{K_{f}}$ from $h_{f}$:
\begin{defn}
Given $\phi\in\cvx_{n}$ we define $\bd{\phi}:\SS^{n-1}\to(-\infty,\infty]$
by 
\begin{equation}
\bd{\phi}(\theta)=\lim_{\lambda\to\infty}\frac{\phi(p+\lambda\theta)}{\lambda},\label{eq:convex-bd}
\end{equation}
 where $p\in\RR^{n}$ is an arbitrary point such that $\phi(p)<\infty$. 
\end{defn}

\begin{prop}
The limit in definition \eqref{eq:convex-bd} exists and is independent
of $p$. Moreover, for every $f\in\lc_{n}$ we have $h_{K_{f}}=\bd{h_{f}}$. 
\end{prop}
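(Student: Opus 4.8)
The plan is to establish the two assertions of the proposition separately: first that the limit defining $\bd{\phi}$ exists and is independent of the base point $p$, and then the identity $h_{K_f} = \bd{h_f}$.

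For the first part, I would fix $p$ with $\phi(p) < \infty$ and set $g(\lambda) = \phi(p + \lambda\theta)$ for $\lambda \ge 0$. This is a convex function of the single real variable $\lambda$, finite at $\lambda = 0$. A standard fact about convex functions of one variable is that the difference quotient $\frac{g(\lambda) - g(0)}{\lambda}$ is non-decreasing in $\lambda > 0$; hence the limit as $\lambda \to \infty$ exists in $(-\infty, \infty]$. Since $\frac{\phi(p+\lambda\theta)}{\lambda} = \frac{g(\lambda) - g(0)}{\lambda} + \frac{g(0)}{\lambda}$ and the last term tends to $0$, the limit in \eqref{eq:convex-bd} exists. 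For independence of $p$: if $\phi(q) < \infty$ as well, then for large $\lambda$ the points $p + \lambda\theta$ and $q + \lambda\theta$ differ by the fixed vector $p - q$, and writing $p + \lambda\theta = \frac{\lambda - \mu}{\lambda}(q + \lambda\theta) + \frac{\mu}{\lambda}(\text{something finite})$ — more cleanly, one uses convexity along the line through $p + \lambda\theta$ and $q + \lambda\theta$ together with the fact that both base points give finite values, to squeeze $\frac{\phi(p + \lambda\theta)}{\lambda}$ and $\frac{\phi(q + \lambda\theta)}{\lambda}$ to the same limit. The cleanest route is to note both equal the \emph{recession function} $(\phi 0^+)(\theta)$ of $\phi$ in the sense of Rockafellar (Theorem 8.5 of \cite{Rockafellar1970}), which is manifestly independent of $p$; but since the paper seems to prefer self-contained arguments, I would give the elementary convexity squeeze.

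For the identity $h_{K_f} = \bd{h_f}$, recall $h_f = (-\log f)^*$, so $h_f^* = -\log f$ and hence $K_f = \overline{\{x : f(x) > 0\}} = \overline{\{x : h_f^*(x) < \infty\}} = \overline{\dom h_f^*}$. Now the support function of $\overline{\dom h_f^*}$ is, by definition, $h_{K_f}(\theta) = \sup_{x \in \dom h_f^*} \langle x, \theta\rangle$. On the other hand, the recession function / asymptotic slope $\bd{h_f}(\theta) = \lim_{\lambda\to\infty} \frac{h_f(p + \lambda\theta)}{\lambda}$ is a classical conjugate of the domain indicator: for any closed convex $\psi = h_f^*$, one has $\psi^*(y) = \sup_x(\langle x,y\rangle - \psi(x))$, and the asymptotic behavior of $\psi^*$ in direction $\theta$ recovers the support function of $\dom\psi$. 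Concretely, $\lim_{\lambda\to\infty}\frac{\psi^*(p+\lambda\theta)}{\lambda} = \sup_{x\in\dom\psi}\langle x,\theta\rangle$: the inequality $\ge$ comes from $\psi^*(p+\lambda\theta) \ge \langle x, p + \lambda\theta\rangle - \psi(x)$ for each $x \in \dom\psi$, dividing by $\lambda$ and letting $\lambda \to \infty$; the inequality $\le$ follows because $\psi$ is bounded below by an affine function (so $\psi^*$ is finite somewhere, and its recession function is the support function of $\dom\psi$ — this is exactly the duality between recession functions and support functions of domains, Theorem 13.3 of \cite{Rockafellar1970}). Applying this with $\psi = h_f^*$, so $\psi^* = h_f$, gives $\bd{h_f}(\theta) = \sup_{x\in\dom h_f^*}\langle x,\theta\rangle = h_{K_f}(\theta)$, using that the support function is insensitive to taking closures.

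The main obstacle is the $\le$ direction of the last identity, i.e.\ showing $\bd{h_f}(\theta) \le h_{K_f}(\theta)$ when $h_{K_f}(\theta)$ may be finite while we only know $h_f$ is everywhere-defined on its domain. One must argue that no direction $\theta$ can produce a strictly larger asymptotic slope for $h_f$ than the supremum of $\langle \cdot, \theta\rangle$ over $\dom h_f^*$; this is precisely the content of the recession-function/support-function duality and requires knowing that $h_f = (h_f^*)^*$ is a closed proper convex function (which holds since $-\log f = h_f^*$ is closed proper convex, $f$ being upper semi-continuous log-concave and not identically $0$). I would handle this by citing Rockafellar's Theorem 13.3 directly, or if a self-contained proof is wanted, by approximating: for $x \in \dom h_f^*$ with $\langle x, \theta\rangle$ close to $h_{K_f}(\theta)$ one controls $h_f$ from below along $p + \lambda\theta$, and conversely convexity plus properness forbids $h_f(p+\lambda\theta)/\lambda$ from exceeding $h_{K_f}(\theta)$ because otherwise $h_f^* = -\log f$ would fail to be finite at such $x$. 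I would also flag the edge case where $f$ is compactly supported (so $h_{K_f} < \infty$ everywhere) versus not, but the argument is uniform.
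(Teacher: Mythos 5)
Your proposal is correct, but it is structured differently from the paper's proof and relies on citations where the paper is self-contained. You prove existence of the limit by monotonicity of one-dimensional difference quotients, get independence of $p$ from recession-function theory (Rockafellar, Theorem 8.5), and then prove $h_{K_f}=\bd{h_f}$ via the duality between the recession function of $\psi^{\ast}$ and the support function of $\dom\psi$ (Theorem 13.3); your lower bound $\bd{h_f}(\theta)\ge\left\langle x,\theta\right\rangle$ for $x\in\dom h_f^{\ast}$ is the same Fenchel-inequality argument the paper uses. The paper instead proves the two inequalities $h_{K_f}(\theta)\le\liminf_{\lambda\to\infty}\phi(p+\lambda\theta)/\lambda$ and $\limsup_{\lambda\to\infty}\phi(p+\lambda\theta)/\lambda\le h_{K_f}(\theta)$ directly, the upper one coming from the elementary minorant $\phi^{\ast}(y)\ge\left\langle p,y\right\rangle-\phi(p)$ inserted into $\phi(p+\lambda\theta)=\sup_{y\in\dom\phi^{\ast}}\left[\left\langle y,p+\lambda\theta\right\rangle-\phi^{\ast}(y)\right]$, which gives $\phi(p+\lambda\theta)/\lambda\le\phi(p)/\lambda+h_{K_f}(\theta)$; since both bounds are independent of $p$, existence of the limit, independence of $p$, and the identity all drop out simultaneously, with no need for the one-dimensional monotonicity step or recession-cone machinery. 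Your route buys modularity and rests on standard references (which is legitimate); the paper's buys a shorter, fully self-contained computation. Be aware that your two ``elementary'' fallbacks --- the convexity squeeze for independence of $p$ (the line through $p+\lambda\theta$ and $q+\lambda\theta$ has direction $p-q$, not $\theta$, so the sketch as written does not go through) and the closing sketch of the upper bound --- are not actually carried out; they are covered by the cited theorems, but if you wanted to avoid the citations you would need essentially the minorant computation above.
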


\begin{proof}
Given $\phi\in\cvx_{n}$ we define $f\in\lc_{n}$ by $f=e^{-\phi^{\ast}}$
so that $h_{f}=\phi$. Fix $\theta\in\SS^{n-1}$ and fix a point $x\in\RR^{n}$
with $f(x)>0$, which means that $\phi^{\ast}(x)<\infty$. Since $\phi=\phi^{\ast\ast}$
we conclude that for every $\lambda>0$ we have
\[
\left\langle x,\theta\right\rangle =\frac{\left(\left\langle x,p+\lambda\theta\right\rangle -\phi^{\ast}(x)\right)+\phi^{\ast}(x)-\left\langle x,p\right\rangle }{\lambda}\le\frac{\phi(p+\lambda\theta)+\phi^{\ast}(x)-\left\langle x,p\right\rangle }{\lambda},
\]
 and therefore 
\[
\left\langle x,\theta\right\rangle \le\liminf_{\lambda\to\infty}\frac{\phi(p+\lambda\theta)+\phi^{\ast}(x)-\left\langle x,p\right\rangle }{\lambda}=\liminf_{\lambda\to\infty}\frac{\phi(p+\lambda\theta)}{\lambda}
\]
 Hence we have 
\[
h_{K_{f}}(\theta)=\sup_{x:\ f(x)>0}\left\langle x,\theta\right\rangle \le\liminf_{\lambda\to\infty}\frac{\phi(p+\lambda\theta)}{\lambda}.
\]

Conversely, since $\phi(p)<\infty$ for every $y\in\RR^{n}$ we have
\[
\phi^{\ast}(y)=\sup_{x\in\RR^{n}}\left[\left\langle x,y\right\rangle -\phi(x)\right]\ge\left\langle p,y\right\rangle -\phi(p).
\]
 Hence for every $\lambda>0$ we obtain
\begin{align*}
\frac{\phi(p+\lambda\theta)}{\lambda} & =\frac{1}{\lambda}\cdot\sup_{y\in\RR^{n}}\left[\left\langle y,p+\lambda\theta\right\rangle -\phi^{\ast}(y)\right]=\frac{1}{\lambda}\cdot\sup_{y\in K_{f}}\left[\left\langle y,p+\lambda\theta\right\rangle -\phi^{\ast}(y)\right]\\
 & \le\frac{1}{\lambda}\sup_{y\in K_{f}}\left[\left\langle y,p+\lambda\theta\right\rangle -\left\langle p,y\right\rangle +\phi(p)\right]=\frac{\phi(p)}{\lambda}+h_{K_{f}}(\theta).
\end{align*}
Therefore $\limsup_{\lambda\to\infty}\frac{\phi(p+\lambda\theta)}{\lambda}\le h_{K_{f}}(\theta)$.
Together it follows that 
\[
h_{K_{f}}(\theta)=\lim_{\lambda\to\infty}\frac{\phi(p+\lambda\theta)}{\lambda},
\]
 which in particular shows that the limit exists and is independent
of $p\in\RR^{n}$. 
\end{proof}
We can now restate Theorem \ref{thm:riesz-functions-general} in the
language of convex functions:

\begin{manualtheorem}{\ref*{thm:riesz-functions-general}$^\star$}\label{thm:riesz-cvx-g}Let
$F:\cvx_{n}\to(-\infty,\infty]$ be a linear and increasing functional.
Assume further that:
\begin{enumerate}
\item \label{enu:cond-finitness}There exists a function $\phi_{0}\in\cvx_{n}$
with $\int e^{-\phi_{0}^{\ast}}>0$ and $F(\phi_{0})<\infty$. 
\item \label{enu:cond-cont}For every $\left\{ \phi_{i}\right\} _{i=1}^{\infty},\phi\in\cvx_{n}$
such that $\phi_{i}^{\ast}\downarrow\phi^{\ast}$ we have $F(\phi_{i})\to F(\phi)$. 
\end{enumerate}
Then there exists a unique finite Borel measure $\mu$ on $\RR^{n}$
with a finite first moment, and a unique finite Borel measure $\nu$
on $\SS^{n-1}$, such that 
\begin{equation}
F(\phi)=\int_{\RR^{n}}\phi\dd\mu+\int_{\SS^{n-1}}\bd{\phi}\dd\nu\label{eq:riesz-cvx-main}
\end{equation}
 for all $\phi\in\cvx_{n}$. Conversely, every functional of the form
\eqref{eq:riesz-cvx-main} satisfies the assumptions of the theorem.

\end{manualtheorem}

The equivalence of Theorems \ref{thm:riesz-functions-general} and
\ref{thm:riesz-cvx-g} is proved in the usual way, so we will not
repeat the argument again. Instead we will start by proving the ``conversely''
part of Theorem \ref{thm:riesz-cvx-g}. It is obvious that the map
$\phi\mapsto\bd{\phi}$ is linear and increasing, and hence every
$F$ of the form \eqref{eq:riesz-cvx-main} is linear and increasing.
Condition \ref{enu:cond-finitness} of Theorem \ref{thm:riesz-cvx-g}
is also simple to check: We take $\phi_{0}(x)=\left|x\right|$ and
observe that $\phi_{0}^{\ast}=\oo_{B_{2}^{n}}^{\infty}$ so $\int e^{-\phi_{0}^{\ast}}=\left|B_{2}^{n}\right|>0$.
Moreover the assumptions on $\mu$ and $\nu$ guarantee that 
\[
F(\phi_{0})=\int_{\RR^{n}}\left|x\right|\dd\mu+\int_{\SS^{n-1}}\dd\nu
\]
 is finite.

All that remains is to check condition \ref{enu:cond-cont} of Theorem
\ref{thm:riesz-cvx-g}. We do so in the following proposition:
\begin{prop}
\label{prop:continuity}If $\left\{ \phi_{i}\right\} _{i=1}^{\infty},\phi\in\cvx_{n}$
and $\phi_{i}^{\ast}\downarrow\phi^{\ast}$ then $\phi_{i}\uparrow\phi$
and $\bd{\phi_{i}}\uparrow\bd{\phi}$. Hence $F(\phi_{i})\to F(\phi)$
for every functional $F$ of the form \eqref{eq:riesz-cvx-main}.
\end{prop}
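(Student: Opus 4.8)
The plan is to prove the three convergence statements in turn, and then deduce the claim about $F$. First I would show $\phi_i \uparrow \phi$. Since the Legendre transform is order-reversing, $\phi_i^\ast \ge \phi_{i+1}^\ast \ge \phi^\ast$ implies $\phi_i \le \phi_{i+1} \le \phi$ after applying $\ast$ again and using $\psi^{\ast\ast} = \psi$ for $\psi \in \cvx_n$. So the sequence $\phi_i$ is pointwise increasing and bounded above by $\phi$; let $\widetilde{\phi} = \lim_i \phi_i = \sup_i \phi_i$. Then $\widetilde\phi$ is convex (a pointwise supremum of convex functions) and satisfies $\widetilde\phi \le \phi$. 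I would like to conclude $\widetilde\phi = \phi$; the natural route is to take the lower semicontinuous hull $\overline{\widetilde\phi}$ and observe that $\overline{\widetilde\phi}^{\,\ast} = \widetilde\phi^{\,\ast} = \inf_i \phi_i^\ast = \phi^\ast$ (the second equality because, for any $y$, $\widetilde\phi^{\,\ast}(y) = \sup_x(\langle x,y\rangle - \sup_i \phi_i(x)) = \inf_i \sup_x(\langle x,y\rangle - \phi_i(x)) = \inf_i \phi_i^\ast(y)$, the interchange being justified since $\phi_i$ is monotone). Taking $\ast$ again gives $\overline{\widetilde\phi} = \phi^{\ast\ast} = \phi$. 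Since $\widetilde\phi \le \phi$ and $\phi$ is the lsc hull of $\widetilde\phi$, and $\widetilde\phi \ge$ its own hull is false in general — but here $\phi = \overline{\widetilde\phi} \le \widetilde\phi$ would need care. The cleanest fix: $\overline{\widetilde\phi} \le \widetilde\phi \le \phi = \overline{\widetilde\phi}$, forcing equality throughout, so $\widetilde\phi = \phi$. Hence $\phi_i \uparrow \phi$.

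Next I would show $\bd{\phi_i} \uparrow \bd{\phi}$. Monotonicity is immediate from the definition \eqref{eq:convex-bd}: since $\phi_i \le \phi_{i+1} \le \phi$ pointwise, $\bd{\phi_i}(\theta) = \lim_{\lambda\to\infty}\frac{\phi_i(p+\lambda\theta)}{\lambda} \le \bd{\phi_{i+1}}(\theta) \le \bd{\phi}(\theta)$, provided we use a common base point $p$ with all $\phi_i(p) < \infty$ and $\phi(p) < \infty$ — such a $p$ exists, e.g. any point in $\dom \phi_1$ (then $\phi_1(p) < \infty$ forces all $\phi_i(p), \phi(p) < \infty$ since $\phi_1 \le \phi_i \le \phi$... wait, that gives $\phi(p)$ possibly $+\infty$; instead take $p \in \dom\phi^\ast{}^\ast$ — better to take $p$ with $\phi(p) < \infty$, which automatically gives $\phi_i(p) \le \phi(p) < \infty$). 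For the reverse inequality $\lim_i \bd{\phi_i}(\theta) \ge \bd\phi(\theta)$, I would use the identification from the preceding Proposition: $\bd{\phi_i} = h_{K_{f_i}}$ where $f_i = e^{-\phi_i^\ast}$, and $\bd\phi = h_{K_f}$ with $f = e^{-\phi^\ast}$. Since $\phi_i^\ast \downarrow \phi^\ast$, we have $f_i \uparrow f$, hence $\{f_i > 0\} \uparrow \{f > 0\}$, so $K_{f_i} = \overline{\{f_i>0\}}$ increases to a set whose closure is $K_f$; therefore $h_{K_{f_i}}(\theta) = \sup_{x \in K_{f_i}}\langle x,\theta\rangle \uparrow \sup_{x: f(x)>0}\langle x,\theta\rangle = h_{K_f}(\theta)$ (a supremum over an increasing union of sets). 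This gives $\bd{\phi_i} \uparrow \bd{\phi}$ pointwise on $\SS^{n-1}$.

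Finally, for a functional $F$ of the form \eqref{eq:riesz-cvx-main}, I would apply the monotone convergence theorem to each of the two integrals. We have $\phi_i \uparrow \phi$ pointwise on $\RR^n$; however $\phi_i$ need not be nonnegative, so to apply MCT I would split off a linear-growth lower bound: there are $\alpha, \beta > 0$ with $\phi_1(x) \ge -\alpha|x| - \beta$ (valid for any function in $\cvx_n$ with a finite value, as used in the proof of Theorem \ref{thm:riesz-cvxf}), hence $\phi_i(x) + \alpha|x| + \beta \ge 0$ for all $i$ and increases to $\phi(x) + \alpha|x| + \beta$; MCT gives $\int(\phi_i + \alpha|x| + \beta)\dd\mu \to \int(\phi + \alpha|x| + \beta)\dd\mu$, and since $\int(\alpha|x| + \beta)\dd\mu < \infty$ because $\mu$ is finite with finite first moment, subtracting yields $\int \phi_i \dd\mu \to \int \phi \dd\mu$ in $(-\infty,\infty]$. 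Similarly $\bd{\phi_i} \uparrow \bd\phi$ on $\SS^{n-1}$ with $\bd{\phi_1}$ bounded below (it is finite-valued and continuous, or at worst bounded below by $-C$ on the compact sphere since $\phi_1$ has at most linear growth), so $\int_{\SS^{n-1}} \bd{\phi_i}\dd\nu \to \int_{\SS^{n-1}}\bd\phi\dd\nu$ by MCT again, $\nu$ being finite. Adding the two limits gives $F(\phi_i) \to F(\phi)$.

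The main obstacle I anticipate is the first step, $\phi_i \uparrow \phi$: one must be careful that the pointwise supremum $\widetilde\phi = \sup_i \phi_i$ need not a priori be lower semicontinuous, and one must correctly argue via the biconjugate that its lsc hull is exactly $\phi$ and then that $\widetilde\phi$ already equals $\phi$ — the key being the interchange $\widetilde\phi^{\,\ast} = \inf_i \phi_i^\ast$, which uses the monotonicity of the sequence and the min-max equality for suprema. The $\bd\phi$ convergence is then comparatively routine once one invokes the support-set description $\bd\phi = h_{K_f}$ from the previous proposition, and the passage to $F$ is a standard monotone convergence argument after handling the (linear) lower bound.
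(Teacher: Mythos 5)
Your argument is correct in substance and takes a genuinely different route from the paper's. For $\phi_{i}\uparrow\phi$ the paper argues pointwise: fix $x$ and $\epsilon>0$, pick a near-maximizer $y_{\epsilon}$ in $\phi(x)=\sup_{y}\left(\left\langle x,y\right\rangle -\phi^{\ast}(y)\right)$ and use $\phi_{i}^{\ast}(y_{\epsilon})\to\phi^{\ast}(y_{\epsilon})$; you argue by biconjugation, which handles the points where $\phi(x)=\infty$ in the same stroke. For $\bd{\phi_{i}}\uparrow\bd{\phi}$ the paper writes $\bd{\phi}(\theta)=\sup_{\lambda>0}\frac{\phi(p+\lambda\theta)-\phi(p)}{\lambda}$ and interchanges two suprema, while you pass through the identification $\bd{\phi}=h_{K_{f}}$ with $f=e^{-\phi^{\ast}}$ and use that $\left\{ f_{i}>0\right\} $ increases to $\left\{ f>0\right\} $; both work, and yours uses the hypothesis $\phi_{i}^{\ast}\downarrow\phi^{\ast}$ directly rather than the already-proved $\phi_{i}\uparrow\phi$. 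Your monotone-convergence step is more careful than the paper's (which simply cites MCT), and the affine-minorant trick is the right way to handle the sign issue for the $\mu$-integral.

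Two justifications need repair, though both conclusions are true. First, the interchange $\sup_{x}\inf_{i}=\inf_{i}\sup_{x}$ is \emph{not} a consequence of monotonicity in $i$ alone: for a general increasing sequence of closed convex functions one only gets that $\left(\sup_{i}\phi_{i}\right)^{\ast}$ is the lower semicontinuous hull of $\inf_{i}\phi_{i}^{\ast}$, and these differ whenever the decreasing limit of the conjugates fails to be lower semicontinuous (which can happen at boundary points of its domain). What saves you is that here $\inf_{i}\phi_{i}^{\ast}=\phi^{\ast}$ is itself closed; so either use the sandwich --- the trivial inequality $\sup_{x}\inf_{i}\le\inf_{i}\sup_{x}$ gives $\widetilde{\phi}^{\ast}\le\phi^{\ast}$, and $\widetilde{\phi}\le\phi$ (which you already proved) gives $\widetilde{\phi}^{\ast}\ge\phi^{\ast}$ --- or skip the hull discussion entirely and write $\sup_{i}\phi_{i}=\sup_{i}\phi_{i}^{\ast\ast}=\left(\inf_{i}\phi_{i}^{\ast}\right)^{\ast}=\phi^{\ast\ast}=\phi$, using only the always-valid identity $\left(\inf_{i}\psi_{i}\right)^{\ast}=\sup_{i}\psi_{i}^{\ast}$. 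Second, in the $\nu$-integral you justify the lower bound on $\bd{\phi_{1}}$ by saying $\phi_{1}$ ``has at most linear growth''; no such assumption appears in the proposition, and $\bd{\phi_{1}}$ may well take the value $+\infty$. The correct reason is again the affine minorant: $\phi_{1}\ge\left\langle a,\cdot\right\rangle +b$ gives $\bd{\phi_{1}}(\theta)\ge\left\langle a,\theta\right\rangle \ge-\left|a\right|$ on $\SS^{n-1}$, which is all that MCT needs.
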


\begin{proof}
Since $\phi_{i}^{\ast}\downarrow\phi^{\ast}$ obviously $\phi_{i}^{\ast}\ge\phi^{\ast}$
for all $i$, so $\phi_{i}\le\phi$ for all $i$.

In the other direction, fix $x\in\RR^{n}$ with $\phi(x)<\infty$
and fix $\epsilon>0$. Choose $y_{\epsilon}\in\RR^{n}$ such that
\[
\phi(x)=\sup_{y\in\RR^{n}}\left(\left\langle x,y\right\rangle -\phi^{\ast}(y)\right)\le\left\langle x,y_{\epsilon}\right\rangle -\phi^{\ast}(y_{\epsilon})+\epsilon.
\]
 Then for every $i\ge1$ we have 
\begin{align*}
\phi_{i}(x) & \ge\left\langle x,y_{\epsilon}\right\rangle -\phi_{i}^{\ast}(y_{\epsilon})\ge\left(\phi(x)+\phi^{\ast}(y_{\epsilon})-\epsilon\right)-\phi_{i}^{\ast}(y_{\epsilon})\\
 & =\phi(x)+\left(\phi^{\ast}(y_{\epsilon})-\phi_{i}^{\ast}(y_{\epsilon})\right)-\epsilon.
\end{align*}
 Letting $i\to\infty$ and using the fact that $\phi_{i}^{\ast}(y_{\epsilon})\to\phi^{\ast}(y_{\epsilon})$
we see that $\lim_{i\to\infty}\phi_{i}(x)\ge\phi(x)-\epsilon$. As
$\epsilon>0$ was arbitrary we conclude that $\phi_{i}(x)\uparrow\phi(x)$.
The case $\phi(x)=\infty$ is handled similarly. 

We now prove that $\bd{\phi_{i}}\uparrow\bd{\phi}$. We fix a point
$p\in\RR^{n}$ such that $\phi(p)<\infty$ , and therefore $\phi_{i}(p)<\phi(p)<\infty$
for all $i$. Then we can write
\[
\bd{\phi_{i}}(\theta)=\lim_{\lambda\to\infty}\frac{\phi_{i}(p+\lambda\theta)}{\lambda}=\lim_{\lambda\to\infty}\frac{\phi_{i}(p+\lambda\theta)-\phi_{i}(p)}{\lambda}=\sup_{\lambda>0}\frac{\phi_{i}(p+\lambda\theta)-\phi_{i}(p)}{\lambda},
\]
 where the last equality holds by the convexity of $\phi_{i}$. The
same of course holds for $\phi$ instead of $\phi_{i}$. Therefore
we have 
\begin{align*}
\lim_{i\to\infty}\bd{\phi_{i}}(\theta) & =\sup_{i\in\NN}\bd{\phi_{i}}(\theta)=\sup_{i\in\NN}\sup_{\lambda>0}\frac{\phi_{i}(p+\lambda\theta)-\phi_{i}(p)}{\lambda}=\sup_{\lambda>0}\sup_{i\in\NN}\frac{\phi_{i}(p+\lambda\theta)-\phi_{i}(p)}{\lambda}\\
 & \ge\sup_{\lambda>0}\left(\lim_{i\to\infty}\frac{\phi_{i}(p+\lambda\theta)-\phi_{i}(p)}{\lambda}\right)=\sup_{\lambda>0}\frac{\phi(p+\lambda\theta)-\phi(p)}{\lambda}=\bd{\phi}(\theta).
\end{align*}
 Since we clearly have $\bd{\phi_{i}}\le\bd{\phi}$ for all $i$,
we conclude that indeed $\bd{\phi_{i}}\uparrow\bd{\phi}$.

Finally, the monotone convergence theorem implies that every functional
$F$ of the form \eqref{eq:riesz-cvx-main} must satisfy $F(\phi_{i})\to F(\phi)$. 
\end{proof}
For most of the proof of Theorem \ref{thm:riesz-cvx-g} we will not
work with the full class $\cvx_{n}$. Instead, we will work with the
following class:
\begin{defn}
A function $\phi\in\cvx_{n}$ is of linear growth if there exists
constants $A_{\phi},B_{\phi}>0$ such that $\phi(x)\le A_{\phi}\left|x\right|+B_{\phi}$
for all $x\in\RR^{n}$. We denote the class of all convex functions
of linear growth by $\cvx_{n}^{\ell}$. 
\end{defn}

If $\phi\in\cvx_{n}^{\ell}$ then clearly $\bd{\phi}$ is everywhere
finite. In fact a little more is true:
\begin{prop}
\label{prop:infty-uniform}Assume $\phi\in\cvx_{n}^{\ell}$. Then
the limit $\widehat{\phi}(\theta)=\lim_{\lambda\to\infty}\frac{\phi(\lambda\theta)}{\lambda}$
exists uniformly in $\theta\in\SS^{n-1}$.
\end{prop}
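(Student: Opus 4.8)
The plan is to fix the point $p=0$ (legitimate by the previous proposition, which shows $\bd{\phi}$ is independent of the base point) and exploit the monotonicity $\lambda\mapsto\frac{\phi(\lambda\theta)-\phi(0)}{\lambda}$ in $\lambda$, together with the uniform upper bound $\frac{\phi(\lambda\theta)}{\lambda}\le A_\phi+\frac{B_\phi}{\lambda}$ coming from linear growth. First I would record that for each fixed $\theta$ the quotient $g_\lambda(\theta)=\frac{\phi(\lambda\theta)-\phi(0)}{\lambda}$ increases to $\bd{\phi}(\theta)=\widehat\phi(\theta)$ as $\lambda\to\infty$, by convexity of $\phi$ along the ray; and that $\widehat\phi$ differs from $\lim_\lambda g_\lambda$ only by the vanishing term $\phi(0)/\lambda$, so it suffices to prove $g_\lambda\to\widehat\phi$ uniformly. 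The key structural fact is that $\widehat\phi$ is itself a finite convex function on $\RR^n$ restricted to $\SS^{n-1}$ (it is the recession/horizon function $\phi_\infty(\theta)=\sup_{\lambda>0} g_\lambda(\theta)$, positively homogeneous and convex), hence continuous on the compact sphere; likewise each $g_\lambda$, being a difference quotient of the convex function $\phi$, is continuous. So we are in the situation of a monotone increasing sequence (net) of continuous functions on a compact space converging pointwise to a continuous limit.

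The main step is then Dini's theorem: a monotone sequence of continuous functions on a compact space converging pointwise to a continuous function converges uniformly. I would apply it to the sequence $g_k(\theta)=\frac{\phi(k\theta)-\phi(0)}{k}$, $k\in\NN$, which increases pointwise to the continuous function $\widehat\phi(\theta)-0$; Dini gives uniform convergence $g_k\to\widehat\phi$. To upgrade from the integer sequence to the continuous limit $\lambda\to\infty$, note that for $k\le\lambda\le k+1$ one has, again by convexity along the ray, the sandwich
\[
\frac{k}{\lambda}\cdot\frac{\phi(k\theta)-\phi(0)}{k}\;\le\;\frac{\phi(\lambda\theta)-\phi(0)}{\lambda}\;\le\;\frac{\phi((k+1)\theta)-\phi(0)}{\lambda}\cdot\frac{\lambda}{k+1}\cdot\frac{k+1}{\lambda},
\]
or more cleanly: monotonicity of $g_\lambda$ in $\lambda$ gives $g_k(\theta)\le g_\lambda(\theta)\le g_{k+1}(\theta)$ for $k\le\lambda\le k+1$, so $|g_\lambda(\theta)-\widehat\phi(\theta)|\le\max\{\widehat\phi(\theta)-g_k(\theta),\,\widehat\phi(\theta)-g_{k+1}(\theta)\}$, which is uniformly small once $k$ is large by the integer case. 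Hence $g_\lambda\to\widehat\phi$ uniformly, and adding back $\phi(0)/\lambda\to0$ uniformly in $\theta$ finishes the proof.

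The part needing a little care — and what I'd flag as the main obstacle — is justifying that $\widehat\phi$ is continuous on $\SS^{n-1}$, since Dini's theorem fails without it. The clean way is to observe that $\widehat\phi$ extends to the function $\Phi(x)=\limsup_{\lambda\to\infty}\frac{\phi(\lambda x)}{\lambda}$ on all of $\RR^n$, which (as a $\limsup$ of convex, positively homogeneous-of-degree-one rescalings $x\mapsto\frac{\phi(\lambda x)-\phi(0)}{\lambda}$, each dominated by $A_\phi|x|+\frac{B_\phi}{\lambda}\le A_\phi|x|+B_\phi$) is convex, positively homogeneous of degree one, and finite everywhere on $\RR^n$; a finite convex function on $\RR^n$ is continuous, and $\widehat\phi=\Phi|_{\SS^{n-1}}$. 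Alternatively one can argue continuity of $\widehat\phi$ directly: positive homogeneity lets one interpolate, and the linear-growth bound gives equi-Lipschitz estimates for the $g_\lambda$ on the sphere (their one-sided directional derivatives are controlled by $A_\phi$), so $\widehat\phi$ inherits a Lipschitz bound. Either route is routine; everything else is the monotone-convergence/Dini machinery above.
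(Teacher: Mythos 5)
Your proof is correct, but it runs along a genuinely different track from the paper's. You exploit the \emph{monotonicity} of the difference quotients $g_\lambda(\theta)=\frac{\phi(\lambda\theta)-\phi(0)}{\lambda}$ in $\lambda$, prove the limit is continuous on $\SS^{n-1}$ by identifying it with the recession function (a finite, convex, positively homogeneous function on $\RR^{n}$, hence continuous), and then invoke Dini's theorem, interpolating between integer values of $\lambda$ again by monotonicity. The paper instead makes no use of monotonicity at this point: it first shows that linear growth forces $\phi$ to be globally $A_{\phi}$-Lipschitz, deduces that the rescaled family $\phi_{\lambda}(\theta)=\frac{\phi(\lambda\theta)}{\lambda}$ is uniformly $A_{\phi}$-Lipschitz on $\SS^{n-1}$, and concludes via the standard fact that an equicontinuous family converging pointwise on a compact set converges uniformly. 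The trade-off: the paper's route gets continuity of the limit for free (it is not needed as a hypothesis, and one even obtains a Lipschitz bound $A_{\phi}$ for $\bd{\phi}$), while your route needs the continuity of $\widehat{\phi}$ established separately — which you correctly flag as the delicate point and handle properly via finiteness of the recession function, using the linear-growth bound only qualitatively. One cosmetic remark: your displayed ``sandwich'' inequality is not quite right as written (the estimate $\frac{k}{\lambda}g_{k}\le g_{\lambda}$ would require $g_{k}\ge 0$), but this is harmless since the clean statement you give immediately afterwards, $g_{k}\le g_{\lambda}\le g_{k+1}$ for $k\le\lambda\le k+1$, is all you use.
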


\begin{proof}
The main point here is that a convex function of linear growth is
(globally) Lipschitz. To see this, fix $x,y\in\RR^{n}$. For $t>0$
we define $z_{t}=y+t(y-x)$ so that $y\in[x,z_{t}]$. By convexity
we then have 
\[
\frac{\phi(y)-\phi(x)}{\left|y-x\right|}\le\frac{\phi(z_{t})-\phi(x)}{\left|z_{t}-x\right|}\le\frac{\left(A_{\phi}\left|z_{t}\right|+B_{\phi}\right)-\phi(x)}{\left|z_{t}\right|-\left|x\right|}\xrightarrow{t\to\infty}A_{\phi},
\]
 so $\phi$ is $A_{\phi}$-Lipschitz. It follows that the family of
functions $\phi_{\lambda}:\SS^{n-1}\to\RR$ defined by $\phi_{\lambda}(\theta)=\frac{\phi(\lambda\theta)}{\lambda}$
are all Lipschitz on $\SS^{n-1}$ with a uniform constant $A_{\phi}$.
In particular the family $\left\{ \phi_{\lambda}\right\} _{\lambda>0}$
is a equicontinuous, which implies that the convergence $\phi_{\lambda}\to\bd{\phi}$
is uniform. 
\end{proof}
Finally, we will need the following approximation lemma:
\begin{lem}
\label{lem:lin-growth-approx}For every $\phi\in\cvx_{n}$ there exist
a sequence $\left\{ \phi_{k}\right\} _{k=1}^{\infty}\subseteq\cvx_{n}^{\ell}$
such that $\phi_{k}^{\ast}\downarrow\phi^{\ast}$. 
\end{lem}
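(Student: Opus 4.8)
The plan is to take an arbitrary $\phi\in\cvx_{n}$ and build the sequence $\{\phi_{k}\}$ directly on the conjugate side, so that the monotone decreasing convergence $\phi_{k}^{\ast}\downarrow\phi^{\ast}$ is visible from the construction. The dual statement to ``$\phi_{k}\in\cvx_{n}^{\ell}$'' is that $\phi_{k}^{\ast}$ is finite only on a bounded set: indeed, $\phi$ has linear growth with constants $A_{\phi},B_{\phi}$ precisely when $\phi^{\ast}(y)=\infty$ for $|y|>A_{\phi}$ (and $\phi^{\ast}(y)\ge -B_{\phi}$ for $|y|\le A_{\phi}$, which is automatic once $\phi^\ast$ is bounded below). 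So the task becomes: given $\psi=\phi^{\ast}\in\cvx_{n}$, produce convex functions $\psi_{k}$ with bounded domain such that $\psi_{k}\downarrow\psi$ pointwise. The natural candidate is
\[
\psi_{k}=\psi+\oo_{kB_{2}^{n}}^{\infty},
\]
i.e. restrict $\psi$ to the ball of radius $k$ by adding a convex indicator. Then $\psi_{k}\in\cvx_{n}$ (it is lower semicontinuous and not identically $+\infty$ once $k$ is large enough that $kB_2^n$ meets $\dom\psi$), the domain of $\psi_k$ is contained in $kB_2^n$, and $\psi_{1}\ge\psi_{2}\ge\cdots$ with $\psi_{k}(y)\to\psi(y)$ for every $y$: for fixed $y$ we have $\psi_{k}(y)=\psi(y)$ as soon as $k\ge|y|$. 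Setting $\phi_{k}=\psi_{k}^{\ast}=(\psi+\oo_{kB_{2}^{n}}^{\infty})^{\ast}$ and using $\phi_{k}^{\ast}=\psi_{k}^{\ast\ast}=\psi_{k}$ (each $\psi_k$ is closed convex, being a sum of two closed convex functions, one of which is continuous on its domain — or one simply notes $\psi_k$ is lsc), we get $\phi_{k}^{\ast}=\psi_{k}\downarrow\psi=\phi^{\ast}$ as required.

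It then remains to verify $\phi_{k}\in\cvx_{n}^{\ell}$. Since $\dom\psi_{k}\subseteq kB_{2}^{n}$, for every $x\in\RR^{n}$
\[
\phi_{k}(x)=\sup_{y\in\RR^{n}}\bigl(\langle x,y\rangle-\psi_{k}(y)\bigr)=\sup_{y\in kB_{2}^{n}}\bigl(\langle x,y\rangle-\psi_{k}(y)\bigr)\le k\left|x\right|-\inf_{y\in kB_{2}^{n}}\psi_{k}(y).
\]
So linear growth holds with $A_{\phi_k}=k$ provided $\inf_{kB_2^n}\psi_k=\inf_{kB_2^n}\psi>-\infty$. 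A convex function is bounded below on any bounded set (a convex function that is not identically $+\infty$ is bounded below on every ball — this is standard; e.g. it follows from the existence of an affine minorant of $\psi$, which exists because $\psi=\phi^{\ast}$ and $\phi$ is proper, so $\psi\ge\langle\cdot,x_0\rangle-\phi(x_0)$ for any $x_0\in\dom\phi$). Hence $B_{\phi_k}=-\inf_{kB_2^n}\psi+1$ works, and $\phi_{k}\in\cvx_{n}^{\ell}$. One small point to address: we should make sure $\phi_k$ is genuinely in $\cvx_n$, i.e. not identically $+\infty$ — but $\phi_k=\psi_k^\ast$ and $\psi_k$ is proper for large $k$, so $\phi_k$ is proper too; for the finitely many small $k$ where $kB_2^n\cap\dom\psi=\emptyset$ one can simply discard those terms or shift indices, which does not affect the existence of the sequence.

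The only mild obstacle is bookkeeping about properness and closedness: one must check that $\psi_{k}=\psi+\oo_{kB_{2}^{n}}^{\infty}$ is a legitimate member of $\cvx_{n}$ (lower semicontinuous, not $\equiv+\infty$) so that the biconjugation identity $\psi_{k}^{\ast\ast}=\psi_{k}$ applies and so that $\phi_{k}=\psi_{k}^{\ast}\in\cvx_{n}$. Both are routine: lower semicontinuity of $\psi_k$ follows since $\psi$ is lsc and $kB_2^n$ is closed; properness for $k$ large follows since $\dom\psi\neq\emptyset$. Once these are in place the lemma is immediate, with an explicit and very clean sequence. I do not anticipate any genuinely hard step; the content is entirely in recognizing that ``linear growth of $\phi$'' dualizes to ``bounded domain of $\phi^{\ast}$,'' after which truncating the domain by a convex indicator is the obvious move.
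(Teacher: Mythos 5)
Your proposal is correct and is essentially the paper's argument viewed from the conjugate side: the paper defines $\phi_{k}(x)=\inf_{y\in\RR^{n}}\left[\phi(y)+k\left|x-y\right|\right]$ and notes that $\phi_{k}^{\ast}=\phi^{\ast}+\oo_{kB_{2}^{n}}^{\infty}$, which is exactly your $\psi_{k}$, so both constructions produce the same sequence. Your additional bookkeeping (properness and closedness of $\psi_{k}$, lower boundedness of $\phi^{\ast}$ on balls via an affine minorant) is accurate and fills in what the paper leaves as ``a computation.''
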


\begin{proof}
We define 
\[
\phi_{k}(x)=\inf_{y\in\RR^{n}}\left[\phi(y)+k\left|x-y\right|\right].
\]
 Since $\phi_{k}(x)\le\phi(0)+k\left|x\right|$ it follows that every
$\phi_{k}$ is of linear growth. A computation shows that $\phi_{k}^{\ast}=\phi^{\ast}+\oo_{kB_{2}^{n}}^{\infty}$,
so we indeed have $\phi_{k}^{\ast}\downarrow\phi^{\ast}$.
\end{proof}

\section{\label{sec:non-finite}Non-Finite linear functionals}

We now prove Theorem \ref{thm:riesz-cvx-g}. We begin with the following
lemma:
\begin{lem}
\label{lem:lin-finite}Assume $F:\cvx_{n}\to(-\infty,\infty]$ satisfies
the assumptions of Theorem \ref{thm:riesz-cvx-g}. Then $F(\phi)<\infty$
for every $\phi$ of linear growth.
\end{lem}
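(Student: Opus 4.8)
The plan is to show that every $\phi\in\cvx_n^\ell$ can be ``dominated up to finite $F$-values'' by the distinguished function $\phi_0$ from assumption \ref{enu:cond-finitness}, after first reducing to the case where $\phi_0$ is itself of linear growth. First I would replace $\phi_0$ by a linear-growth function with the same relevant properties: by Lemma \ref{lem:lin-growth-approx} there is a sequence $\left\{(\phi_0)_k\right\}\subseteq\cvx_n^\ell$ with $(\phi_0)_k^\ast\downarrow\phi_0^\ast$, and by assumption \ref{enu:cond-cont} we have $F\left((\phi_0)_k\right)\to F(\phi_0)<\infty$; so for large $k$ the function $\psi_0:=(\phi_0)_k$ satisfies $\psi_0\in\cvx_n^\ell$ and $F(\psi_0)<\infty$. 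I would also need $\int e^{-\psi_0^\ast}>0$, which persists since $\psi_0^\ast\le\phi_0^\ast$ pointwise (up from the monotone limit) gives $e^{-\psi_0^\ast}\ge e^{-\phi_0^\ast}$, hence $\int e^{-\psi_0^\ast}\ge\int e^{-\phi_0^\ast}>0$. The point of $\int e^{-\psi_0^\ast}>0$ is that $\psi_0^\ast$ is finite on a set of positive measure, in particular on some open set, so $K_{e^{-\psi_0^\ast}}=\dom(\psi_0^\ast)$ has nonempty interior; equivalently $\psi_0=h_{e^{-\psi_0^\ast}}$ is \emph{super-linear} in a genuine sense — more precisely, by the Proposition relating $\bd{\phi}$ to support functions, $\bd{\psi_0}=h_{K}$ where $K=\dom(\psi_0^\ast)$ has nonempty interior, so $\bd{\psi_0}(\theta)+\bd{\psi_0}(-\theta)>0$ for every $\theta\in\SS^{n-1}$.

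Next, given an arbitrary $\phi\in\cvx_n^\ell$, I would show $\phi\le C\cdot\psi_0 + (\text{affine})$ for a large constant $C>0$, plus possibly a bounded correction. Here is the mechanism: since $\phi\in\cvx_n^\ell$, Proposition \ref{prop:infty-uniform} gives that $\widehat{\phi}(\theta)=\lim_{\lambda\to\infty}\frac{\phi(\lambda\theta)}{\lambda}$ exists uniformly, and likewise $\frac{\psi_0(\lambda\theta)}{\lambda}\to\bd{\psi_0}(\theta)$. Because $\dom(\psi_0^\ast)$ has nonempty interior we have $\bd{\psi_0}(\theta)\ge\delta>0$ uniformly on $\SS^{n-1}$ after translating $\psi_0$ so that its (convex) set $\dom(\psi_0^\ast)$ contains a ball around the origin; translation of $\psi_0$ only changes it by an affine function, hence changes $F(\psi_0)$ by a finite amount (using linearity of $F$ and $F(\text{affine})<\infty$, which itself follows from $F(\pm\langle\cdot,v\rangle)<\infty$ — an affine function lies between two linear-growth convex functions with finite $F$, or one argues directly that $\langle\cdot,v\rangle\le\psi_0+\text{const}$... this needs a little care). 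With $\bd{\psi_0}\ge\delta$ uniform and $\widehat\phi$ bounded (by $A_\phi$), choose $C>A_\phi/\delta$. Then $C\bd{\psi_0}-\widehat\phi\ge C\delta-A_\phi>0$ uniformly, and by the uniform convergence this forces $C\psi_0(x)-\phi(x)\to+\infty$ as $|x|\to\infty$; in particular $\phi\le C\psi_0 + M$ for some constant $M$, i.e. $\phi\le C\psi_0+M\oo$ with $M\oo\in\cvx_n^\ell$ and $F(M\oo)<\infty$. Monotonicity and linearity of $F$ then give $F(\phi)\le C\,F(\psi_0)+M\,F(\oo)<\infty$.

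There is one gap to patch: I need $F(\oo)<\infty$ and $F$ finite on affine functions before the estimates above are legitimate. For $F(\oo)$: the constant function $\oo$ satisfies $\oo\le\psi_0+N$ for suitable $N$ (as $\psi_0\ge$ some affine function, and is bounded below), so $\oo\le\psi_0+N\oo$; but this is circular. Instead I would first prove $F(\oo)<\infty$ directly: write $0\cdot\psi_0 = $ the constant $0$ function? No — rather, use that $\psi_0$ and $\psi_0+\oo$ are both in $\cvx_n$ with $\psi_0\le\psi_0+\oo$, but to extract $F(\oo)$ one needs linearity in the form $F(\psi_0+\oo)=F(\psi_0)+F(\oo)$, valid since $\psi_0+\oo\in\cvx_n$; so $F(\oo)=F(\psi_0+\oo)-F(\psi_0)$, and the left term is finite because $\psi_0+\oo\le 2\psi_0+K$ for the constant $K=\sup(\oo-\psi_0)<\infty$ (as $\psi_0\to\infty$... not necessarily — $\psi_0$ of linear growth need not be coercive). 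The cleanest fix: observe $\oo\le\frac{1}{2}\psi_0+\frac{1}{2}(-\psi_0+2\oo)$ — no good either since $-\psi_0$ is concave.

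\emph{The main obstacle} is precisely this bootstrapping: establishing $F(\oo)<\infty$, and more generally $F<\infty$ on the ``small'' functions (constants and affine maps), starting only from $F(\psi_0)<\infty$ for one linear-growth $\psi_0$ with $\dom(\psi_0^\ast)$ solid. I expect the resolution to run as follows: since $\dom(\psi_0^\ast)$ contains a ball $rB_2^n$ (after translation), we get $\psi_0(x)=\psi_0^{\ast\ast}(x)=\sup_{y\in\dom\psi_0^\ast}(\langle x,y\rangle-\psi_0^\ast(y))\ge r|x|-\sup_{rB_2^n}\psi_0^\ast = r|x|-C_0$, so $\psi_0(x)\ge r|x|-C_0$ — that is, $\psi_0$ \emph{is} coercive, indeed super-linear-from-below with a linear rate. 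Hence $\oo\le\psi_0+C_0\cdot\oo$ when... still need $\psi_0\ge0-C_0$, which the bound gives: $\psi_0\ge -C_0$, so $\oo\le\psi_0+(1+C_0)\oo$, giving $F(\oo)\le F(\psi_0)+(1+C_0)F(\oo)$ — useless sign. Correct version: $\oo + \psi_0' \le \psi_0 + C'$ where I pick $\psi_0'$ to be an auxiliary... Actually the honest move: constants are \emph{not} bounded above by coercive functions plus constants in the wrong direction — but $0\le\psi_0+C_0$ pointwise and $\psi_0+C_0\in\cvx_n$, so by monotonicity applied to $0\le\psi_0+C_0$, $F(0)\le F(\psi_0+C_0)=F(\psi_0)+C_0 F(\oo)$; and $F(0)$: the zero function is the constant $0$, so $F(0)=0\cdot F(\oo)=0$ by linearity (scaling $\alpha\to0$, or: $F(0)=F(0+0)=2F(0)$). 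That gives $0\le F(\psi_0)+C_0F(\oo)$, still not an upper bound on $F(\oo)$. The genuinely correct idea: $F(\oo)<\infty$ because $\oo=\psi_0+(\oo-\psi_0)$ fails (second term not convex); so instead approximate — take $\psi_0(x)=\inf_y[\psi_0(y)+k|x-y|]$-type \emph{lower} regularizations? These increase to $\psi_0$... I will instead argue: $2\psi_0 - $ (the constant $2\inf\psi_0$) is $\ge\psi_0-2\inf\psi_0+\inf\psi_0 = \psi_0-\inf\psi_0\ge 0$, and equals $\psi_0$ plus a constant, and dominates $\psi_0$; by monotonicity $F(\psi_0)\le F(2\psi_0-c)=2F(\psi_0)-cF(\oo)$, i.e. $cF(\oo)\le F(\psi_0)<\infty$, so if $c=-2\inf\psi_0>0$ (which holds when $\inf\psi_0<0$, arrangeable by subtracting a constant from $\psi_0$, costing a finite amount) we conclude $F(\oo)<\infty$. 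This is the crux; once $F(\oo)<\infty$ and hence $F$ is finite on all affine functions (each affine $\le$ coercive $\psi_0 + $ const via the $r|x|-C_0$ bound), the domination argument of the second paragraph goes through and the lemma follows.
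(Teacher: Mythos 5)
Your overall skeleton (dominate an arbitrary $\phi\in\cvx_{n}^{\ell}$ by $C\psi_{0}+M$ and use monotonicity plus linearity) would work, but the step you yourself flag as the crux --- finiteness of $F$ on constants and affine functions --- is exactly where the proposal breaks down, and your attempted patch does not close it. With $c=2\inf\psi_{0}<0$ your inequality $cF(\oo)\le F(\psi_{0})$ is a \emph{lower} bound on $F(\oo)$ and says nothing when $F(\oo)=+\infty$; with $c=-2\inf\psi_{0}>0$ the monotonicity premise $\psi_{0}\le2\psi_{0}-c$ would require $\psi_{0}\ge c>0$, contradicting the very normalization $\inf\psi_{0}<0$ you imposed. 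Moreover that normalization (``subtract a constant from $\psi_{0}$, costing a finite amount'') already presupposes $F(\oo)<\infty$, and your later claim that $F$ is finite on all affine functions is likewise circular: the bound $\psi_{0}\ge r\left|x\right|-C_{0}$ holds only \emph{after} translating $\dom\psi_{0}^{\ast}$ to contain a ball around the origin, and that translation changes $\psi_{0}$ by a linear function whose $F$-value is precisely what is not yet known to be finite (without the translation, $\left\langle x,v\right\rangle +b\le C\psi_{0}+M$ forces $v/C\in\dom\psi_{0}^{\ast}$, which fails for general $v$ when $0\notin\dom\psi_{0}^{\ast}$).

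The missing idea is that linearity alone settles the bootstrapping in two lines, which is how the paper proceeds: from $F(\phi_{0})=F\left(\mathbf{0}+\phi_{0}\right)=F\left(\mathbf{0}\right)+F(\phi_{0})$ and $F(\phi_{0})<\infty$ one gets $F\left(\mathbf{0}\right)=0$; then for any affine $\ell$, $F(\ell)+F(-\ell)=F\left(\mathbf{0}\right)=0$ with both terms in $(-\infty,\infty]$ forces both to be finite. (The same mechanism rescues your own computation: $F(\psi_{0})=F\left(\left(\psi_{0}-K\oo\right)+K\oo\right)=F\left(\psi_{0}-K\oo\right)+KF(\oo)$ with finite left-hand side already yields $F(\oo)<\infty$.) Once affine values are finite, the paper avoids your reduction entirely: from $\int e^{-\phi_{0}^{\ast}}>0$ it extracts $\phi_{0}(x)\ge\left\langle x,p\right\rangle +\frac{r}{2}\left|x\right|-m$, hence $F\left(\left|x\right|\right)<\infty$, and then $\phi\le A_{\phi}\left|x\right|+B_{\phi}$ finishes. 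Two smaller remarks: your passage to a linear-growth $\psi_{0}$ is unnecessary (and if you do want it, monotonicity gives $F\left((\phi_{0})_{k}\right)\le F(\phi_{0})$ directly, no need for assumption (2)); and the inequality you state there is reversed --- since $(\phi_{0})_{k}^{\ast}\downarrow\phi_{0}^{\ast}$ we have $\psi_{0}^{\ast}\ge\phi_{0}^{\ast}$, so $\int e^{-\psi_{0}^{\ast}}>0$ needs the monotone-convergence argument $\int_{kB_{2}^{n}}e^{-\phi_{0}^{\ast}}\to\int e^{-\phi_{0}^{\ast}}>0$ rather than the pointwise domination you invoke.
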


\begin{proof}
We are given that there exists a function $\phi_{0}\in\cvx_{n}$ such
that $\int e^{-\phi_{0}^{\ast}}>0$ and $F(\phi_{0})<\infty$. If
$\mathbf{0}$ denotes the constant function $0$ then 
\[
F(\phi_{0})=F\left(\mathbf{0}+\phi_{0}\right)=F\left(\mathbf{0}\right)+F\left(\phi_{0}\right),
\]
 so $F\left(\mathbf{0}\right)=0$. For every affine function $\ell$
we have $\pm\ell\in\cvx_{n}$ and $F(\ell)+F(-\ell)=F\left(\mathbf{0}\right)=0$,
so $F\left(\ell\right)<\infty$. 

Since $\int e^{-\phi_{0}^{\ast}}>0$ we can find a ball $B(p,r)$
such that $\phi_{0}^{\ast}$ is finite and bounded on $B(p,r$), say
$\phi_{0}^{\ast}<m$. Then 
\begin{align*}
\phi_{0}(x) & =\sup_{y\in\RR^{n}}\left[\left\langle x,y\right\rangle -\phi_{0}^{\ast}(y)\right]\ge\left\langle x,p+r\frac{x}{2\left|x\right|}\right\rangle -\phi_{0}^{\ast}\left(p+r\frac{x}{2\left|x\right|}\right)\\
 & \ge\left\langle x,p\right\rangle +\frac{r}{2}\left|x\right|-m.
\end{align*}
 It follows that 
\begin{align*}
F\left(\left|x\right|\right) & =\frac{2}{r}\left(F\left(\frac{r}{2}\left|x\right|+\left\langle x,p\right\rangle -m\right)+F\left(-\left\langle x,p\right\rangle +m\right)\right)\\
 & \le\frac{2}{r}\left(F(\phi_{0})+F\left(-\left\langle x,p\right\rangle +m\right)\right)<\infty.
\end{align*}
 Finally, for every $\phi\in\cvx_{n}^{\ell}$ we have 
\[
F\left(\phi\right)\le F\left(A_{\phi}\left|x\right|+B_{\phi}\right)=A_{\phi}F\left(\left|x\right|\right)+F\left(B_{\phi}\right)<\infty
\]
 as we claimed.
\end{proof}
We now proceed in a way similar to the proof of Theorem \ref{thm:riesz-cvxf}:
We consider the space 
\[
E_{\ell}=\left\{ \phi+f:\ \phi\in\cvx_{n}^{\ell}\text{ and }f\in C_{c}\left(\RR^{n}\right)\right\} ,
\]
and claim the following extension result:
\begin{lem}
\label{lem:El-extend}Let $F:\cvx_{n}^{\ell}\to\RR$ be a linear and
increasing functional. Then $F$ can be extended to a functional $F:E_{\ell}\to\RR$
which is again linear and increasing.
\end{lem}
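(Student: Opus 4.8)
The plan is to repeat the proof of Lemma~\ref{lem:E-extend} essentially word for word, with $\cvx_n^F$ replaced by $\cvx_n^\ell$ throughout. The one place where this substitution is not automatic is the use of the auxiliary functions $\rho_a$: we now need $\rho_a\in\cvx_n^\ell$, and this is precisely the reason $\rho_a$ was defined to grow \emph{linearly} (rather than quadratically) outside the ball $B(0,a)$. Indeed, for $\left|x\right|\ge a$ we have $\rho_a(x)=a^2\left|x\right|-\frac{a^3}{2}\le a^2\left|x\right|$, so $\rho_a\in\cvx_n^\ell$; had we taken the quadratic shortcut $\rho_a(x)=a\frac{\left|x\right|^{2}}{2}$ this would fail, which is why the more elaborate definition was chosen in Lemma~\ref{lem:E-extend}.

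First I would extend $F$ to $\widetilde{E}_\ell=\left\{\phi+f:\ \phi\in\cvx_n^\ell\text{ and }f\in C_c^2(\RR^n)\right\}$. Given $f\in C_c^2(\RR^n)$ supported in $B(0,r)$ with $\nabla^2 f\succeq -m\cdot Id$, the Hessian estimate of Lemma~\ref{lem:E-extend} shows that for $a>\max(r,m)+1$ the function $f+\rho_a$ is convex; moreover $f+\rho_a$ coincides with $\rho_a$ outside $B(0,r+\frac{1}{2})$, so it is also of linear growth, i.e. $f+\rho_a\in\cvx_n^\ell$. I would then set $F(\phi+f)=F(\phi)+F(f+\rho_a)-F(\rho_a)$ and verify that this is well-defined, linear and increasing on $\widetilde{E}_\ell$. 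Each verification is the same chain of identities as in Lemma~\ref{lem:E-extend}, and uses only the linearity and monotonicity of $F$ on $\cvx_n^\ell$ together with the fact that all the combinations that appear ($\rho_a$, $f_i+\rho_a$, $\alpha f_1+\beta f_2+\rho_a$, and so on) again lie in $\cvx_n^\ell$, since $\cvx_n^\ell$ is closed under addition and the new summands all agree with $\rho_a$ off a compact set.

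Finally I would pass from $\widetilde{E}_\ell$ to $E_\ell$ exactly as before. Since $\oo\in\cvx_n^\ell$ is bounded, $F(\oo)$ is a finite real number, and linearity together with monotonicity on $\widetilde{E}_\ell$ yields $\left|F(u)-F(v)\right|\le\left\Vert u-v\right\Vert_\infty\cdot F(\oo)$ for $u,v\in\widetilde{E}_\ell$ (trivially true when the right-hand side is $+\infty$). As $\widetilde{E}_\ell$ is dense in $E_\ell$ in the supremum norm, because $C_c^2(\RR^n)$ is dense in $C_c(\RR^n)$, the functional $F$ extends uniquely and continuously to a linear functional on $E_\ell$. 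Monotonicity on all of $E_\ell$ then follows by the sandwiching argument from the end of the proof of Lemma~\ref{lem:E-extend}: given $\phi_1+f_1\le\phi_2+f_2$ in $E_\ell$, approximate $f_1$ from below and $f_2$ from above by sequences in $C_c^2(\RR^n)$ and pass to the limit. The only genuine obstacle in the whole argument is the bookkeeping that keeps $\rho_a$ and $f+\rho_a$ inside $\cvx_n^\ell$, and, as noted, the definition of $\rho_a$ was tailored precisely to make this possible.
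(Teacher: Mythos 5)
Your proposal is correct and is exactly the paper's argument: the paper simply declares the proof of Lemma~\ref{lem:El-extend} to be that of Lemma~\ref{lem:E-extend} with $\cvx_{n}^{F}$ replaced by $\cvx_{n}^{\ell}$, and it highlights precisely the point you identified, namely that the functions $\rho_{a}$ were chosen to have linear growth (so that $\rho_{a}$, $f+\rho_{a}$, etc.\ stay in $\cvx_{n}^{\ell}$) rather than a simpler quadratic choice. Nothing is missing.
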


The proof of Lemma \ref{lem:El-extend} is identical to the proof
of Lemma \ref{lem:E-extend}: just replace $\cvx_{n}^{F}$ by $\cvx_{n}^{\ell}$
everywhere in the proof. It should now be clear why we chose in the
proof to work the functions $\rho_{a}$ (which satisfy $\rho_{a}\in\cvx_{n}^{\ell}$)
and not with simpler functions like $a\frac{\left|x\right|^{2}}{2}$
(which do not). 

We are ready to prove a Riesz type theorem for the space $\cvx_{n}^{\ell}$:
\begin{thm}
\label{thm:repr-lin-growth}Let $F:\cvx_{n}^{\ell}\to\RR$ be a linear
and increasing functional. Then there exists a unique finite Borel
measure $\mu$ on $\RR^{n}$ with a finite first moment, and a unique
finite Borel measure $\nu$ on $\SS^{n-1}$, such that 
\[
F(\phi)=\int_{\RR^{n}}\phi\dd\mu+\int_{\SS^{n-1}}\bd{\phi}\dd\nu
\]
 for all $\phi\in\cvx_{n}$.
\end{thm}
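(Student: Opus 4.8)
The plan is to use Lemma \ref{lem:El-extend} to extend $F$ to the space $E_\ell$, then to apply the classical Riesz theorem (Theorem \ref{thm:riesz-classic}) to the restriction $\left.F\right|_{C_c(\RR^n)}$, obtaining a positive Radon measure $\mu$ on $\RR^n$ with $F(f) = \int f\,\dd\mu$ for all $f \in C_c(\RR^n)$. The first main step is to show that $\mu$ is finite with finite first moment: finiteness follows as in the proof of Theorem \ref{thm:riesz-cvxf} since $\int \dd\mu = F(\oo) < \infty$, and the finite first moment should follow by testing $F$ against $\eta_R(x)|x|$ for cutoffs $\eta_R$ and using monotone convergence, comparing with $F(|x|) < \infty$. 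The second and more substantial step is to recover, for a general $\phi \in \cvx_n^\ell$, the formula $F(\phi) = \int_{\RR^n}\phi\,\dd\mu + \int_{\SS^{n-1}}\bd\phi\,\dd\nu$ for an appropriate measure $\nu$ on the sphere that accounts for the ``mass at infinity'' that $\mu$ does not see.

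The natural way to isolate $\nu$ is as follows. For $\phi \in \cvx_n^\ell$ one would like to say $F(\phi) - \int \phi\,\dd\mu$ depends only on the behavior of $\phi$ at infinity, i.e. only on $\bd\phi$ (equivalently $\widehat\phi$, by Proposition \ref{prop:infty-uniform}). To make this precise, first handle $\phi \ge 0$: pick a cutoff sequence and write $\phi = \phi\eta_R + \phi(1-\eta_R)$; the term $\phi\eta_R$ lies in $C_c(\RR^n)$ with $F(\phi\eta_R) = \int \phi\eta_R\,\dd\mu \uparrow \int\phi\,\dd\mu$. The key claim is that $G(\phi) := \lim_{R\to\infty} F(\phi(1-\eta_R))$ exists, is linear and increasing in $\phi$, vanishes whenever $\phi$ is compactly supported (or more generally decays faster than linearly, using a fast-growing comparison function in the spirit of Lemma \ref{lem:fast-growing}), and depends only on $\widehat\phi \in C(\SS^{n-1})$. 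Once $G$ factors through the map $\phi \mapsto \widehat\phi$, it descends to a linear increasing functional on the cone $\{\widehat\phi : \phi \in \cvx_n^\ell\}$ of support functions of convex bodies inside $C(\SS^{n-1})$, and I can invoke Theorem \ref{thm:riesz-bodies} (or its proof) to write $G(\phi) = \int_{\SS^{n-1}}\widehat\phi\,\dd\nu = \int_{\SS^{n-1}}\bd\phi\,\dd\nu$ for a unique finite Borel measure $\nu$ on $\SS^{n-1}$. Adding the two pieces gives the representation for $\phi \ge 0$, and the general case follows by the same affine-shift trick used at the end of the proof of Theorem \ref{thm:riesz-cvxf}: write $\phi = (\phi + A_\phi|x| + B_\phi) - (A_\phi|x| + B_\phi)$ with both summands nonnegative and in $\cvx_n^\ell$, noting $\bd\cdot$ is additive.

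Finally, the representation for $\phi \in \cvx_n^\ell$ forces it for all $\phi \in \cvx_n$: given arbitrary $\phi \in \cvx_n$, Lemma \ref{lem:lin-growth-approx} furnishes $\phi_k \in \cvx_n^\ell$ with $\phi_k^\ast \downarrow \phi^\ast$, hence by Proposition \ref{prop:continuity} $\phi_k \uparrow \phi$ and $\bd{\phi_k} \uparrow \bd\phi$; assumption \ref{enu:cond-cont} of Theorem \ref{thm:riesz-cvx-g} gives $F(\phi_k) \to F(\phi)$, while monotone convergence handles both integrals on the right, yielding \eqref{eq:riesz-cvx-main}. For uniqueness: testing against $f \in C_c^2(\RR^n) \subseteq \cvx_n^\ell - \cvx_n^\ell$ pins down $\mu$ via the classical Riesz theorem as in Theorem \ref{thm:riesz-cvxf}, and then testing against linear functions $\ell = \langle\cdot,\theta\rangle$ (for which the $\mu$-part is already known and $\bd\ell$ ranges over enough functions on $\SS^{n-1}$, e.g. restrictions of linear functionals, together with support functions of polytopes) pins down $\nu$ by the uniqueness in Theorem \ref{thm:riesz-bodies}. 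I expect the main obstacle to be the key claim that $G(\phi)$ is well-defined and depends only on $\widehat\phi$ — in particular showing that two functions in $\cvx_n^\ell$ with the same recession behavior have the same $G$-value, which is where the fast-growing-function machinery of Lemma \ref{lem:fast-growing} and the uniform convergence of Proposition \ref{prop:infty-uniform} will have to be combined carefully.
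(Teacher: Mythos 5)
Your overall architecture coincides with the paper's: extend $F$ to $E_{\ell}$ via Lemma \ref{lem:El-extend}, apply the classical Riesz theorem to $\left.F\right|_{C_{c}(\RR^{n})}$ to get $\mu$ (finite, with finite first moment, by testing against $\oo$ and $\left|x\right|$), and then represent the leftover functional $G(\phi)=F(\phi)-\int\phi\dd\mu$ through Theorem \ref{thm:riesz-bodies}. But the step you yourself flag as ``the main obstacle'' --- that $G(\phi)$ depends only on $\bd{\phi}$ --- is exactly the heart of the proof, and your proposal contains no argument for it. The special case you do indicate (that $G$ vanishes when $\bd{\phi}=0$) does not yield the general claim, because $\cvx_{n}^{\ell}$ is only a cone: for $\phi_{1},\phi_{2}$ with $\bd{\phi_{1}}=\bd{\phi_{2}}$ one cannot write $\phi_{2}=\phi_{1}+(\text{sublinear convex function})$, since $\phi_{2}-\phi_{1}$ is in general not convex, so no linearity argument reduces the general case to the sublinear one. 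The actual argument runs as follows: first one shows $G$ is increasing (for $\phi_{1}\le\phi_{2}$, $F(\phi_{2})\ge F(\eta_{R}(\phi_{2}-\phi_{1}))+F(\phi_{1})=\int\eta_{R}(\phi_{2}-\phi_{1})\dd\mu+F(\phi_{1})$, then monotone convergence); then, for $\phi_{1}\le\phi_{2}$ with $\bd{\phi_{1}}=\bd{\phi_{2}}$, Proposition \ref{prop:infty-uniform} gives $\phi_{2}-\phi_{1}=o(\left|x\right|)$ uniformly, so $\zeta_{R}:=\max\left(0,R(\phi_{2}-\phi_{1})-\left|x\right|\right)\in C_{c}(\RR^{n})$ and $R\phi_{2}\le\zeta_{R}+\left|x\right|+R\phi_{1}$; applying the extended $F$, using $F(\zeta_{R})=\int\zeta_{R}\dd\mu\le R\int(\phi_{2}-\phi_{1})\dd\mu$, dividing by $R$ and letting $R\to\infty$ gives $G(\phi_{2})\le G(\phi_{1})$, hence equality; the unordered case then follows by comparing both functions with $\max(\phi_{1},\phi_{2})$, whose recession function is again $\bd{\phi_{1}}$. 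Note that Lemma \ref{lem:fast-growing} is not the relevant tool here: the comparison function is simply $\left|x\right|$, and what makes $\zeta_{R}$ compactly supported is precisely the uniformity in Proposition \ref{prop:infty-uniform}. Without this block of argument the functional $\widetilde{G}$ on support functions is not well defined and Theorem \ref{thm:riesz-bodies} cannot be invoked, so this is a genuine gap rather than a routine verification.

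Two secondary points. Your closing step, which extends the formula from $\cvx_{n}^{\ell}$ to all of $\cvx_{n}$ by invoking assumption \ref{enu:cond-cont} of Theorem \ref{thm:riesz-cvx-g}, imports a hypothesis that is not part of the present theorem: here $F$ is defined only on $\cvx_{n}^{\ell}$ and no continuity is assumed, so the conclusion can only be asserted for $\phi\in\cvx_{n}^{\ell}$ (the passage to all of $\cvx_{n}$ is exactly Theorem \ref{thm:riesz-cvx-g}, proved afterwards and genuinely needing that hypothesis). In the uniqueness argument, if you pin down $\mu$ first by testing on $f=(f+\rho_{a})-\rho_{a}$ with $f\in C_{c}^{2}(\RR^{n})$, you must observe that the $\nu$-contributions cancel because $\bd{f+\rho_{a}}=\bd{\rho_{a}}$; and testing against linear functions alone only determines the barycenter of $\nu$, so what does the work is testing against all support functions $h_{K}$ together with the uniqueness in Theorem \ref{thm:riesz-bodies} (the paper instead isolates $\nu$ first via $\phi_{R}=\max\{h_{K}-R,0\}$ and monotone convergence, then recovers $\mu$ as in Theorem \ref{thm:riesz-cvxf}). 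These issues are fixable; the missing well-definedness argument above is the substantive one.
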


\begin{proof}
First, by Lemma \ref{lem:El-extend} we can extend $F$ to the space
$E_{\ell}$. Applying the classical Riesz theorem to the restriction
$\left.F\right|_{C_{c}\left(\RR^{n}\right)}$, we conclude that there
exists a Random measure $\mu$ on $\RR^{n}$ such that $F(f)=\int_{\RR^{n}}f\dd\mu$
for all $f\in C_{c}\left(\RR^{n}\right)$. The same argument as in
Theorem \ref{thm:riesz-cvxf} shows that $F(\phi)\ge\int_{\RR^{n}}\phi\dd\mu$
for all non-negative $\phi\in\cvx_{n}^{\ell}$ (we will essentially
repeat the argument in the next paragraph). In particular 
\[
\int\dd\mu\le F\left(\oo\right)<\infty,\qquad\int\left|x\right|\dd\mu\le F\left(\left|x\right|\right)<\infty,
\]
 which implies that $\mu$ is finite with a finite first moment. 

Define $G:\cvx_{n}^{\ell}\to\RR$ by 
\[
G\left(\phi\right)=F(\phi)-\int_{\RR^{n}}\phi\dd\mu.
\]
 Note that $G$ is indeed always finite, since $\mu$ is finite with
finite first moment and $\phi$ is of linear growth. We will need
two properties of the functional $G$. First, we we claim that $G$
is itself linear and increasing. Indeed, $G$ is clearly linear. To
show that $G$ is increasing we fix two functions $\phi_{1},\phi_{2}\in\cvx_{n}^{\ell}$
such that $\phi_{1}\le\phi_{2}$. For every $R>0$ let $\eta_{R}\in C_{c}\left(\RR^{n}\right)$
be a function such that $0\le\eta_{R}\le1$, $\eta_{R}\equiv1$ on
the ball $B(0,R)$, and $\eta_{R}$ is compactly supported. Since
$f\cdot(\phi_{2}-\phi_{1})\in C_{c}\left(\RR^{n}\right)$ and $f\cdot(\phi_{2}-\phi_{1})+\phi_{1}\le\phi_{2}$
we conclude that 
\begin{align*}
F(\phi_{2}) & \ge F\left(f(\phi_{2}-\phi_{1})\right)+F(\phi_{1})=\int_{\RR^{n}}f\left(\phi_{2}-\phi_{1}\right)\dd\mu+F(\phi_{1})\\
 & \ge\int_{B(0,R)}\left(\phi_{2}-\phi_{1}\right)\dd\mu+F(\phi_{1}).
\end{align*}
 Letting $R\to\infty$ and using the monotone convergence theorem
we conclude that 
\[
F(\phi_{2})\ge\int_{\RR^{n}}\left(\phi_{2}-\phi_{1}\right)\dd\mu+F(\phi_{1}),
\]
or $G(\phi_{2})\ge G(\phi_{1})$. 

Next, we want to prove that $G(\phi)$ depends only on $\bd{\phi}$.
In other words, we want to prove that if $\bd{\phi_{1}}=\bd{\phi_{2}}$
then $G(\phi_{1})=G(\phi_{2})$. We first show it under the extra
assumption that $\phi_{1}\le\phi_{2}$. 

Under this assumption we already proved that $G(\phi_{1})\le G(\phi_{2})$
. For the reverse inequality, we again fix $R>0$ and define 
\[
\zeta_{R}(x)=\max\left(0,R\left(\phi_{2}(x)-\phi_{1}(x)\right)-\left|x\right|\right).
\]
 We claim that $\zeta_{R}\in C_{c}\left(\RR^{n}\right)$. Since it
is obviously continuous, we just need to show it is compactly supported.
By our assumption we have 
\[
\lim_{\lambda\to\infty}\frac{\phi_{1}(\lambda\theta)}{\lambda}=\bd{\phi_{1}}(\theta)=\bd{\phi_{2}}(\theta)=\lim_{\lambda\to\infty}\frac{\phi_{2}(\lambda\theta)}{\lambda},
\]
 and by Proposition \ref{prop:infty-uniform} these limits are uniform
in $\theta\in\SS^{n-1}$. It follows that 
\[
\lim_{\lambda\to\infty}\frac{\phi_{2}(\lambda\theta)-\phi_{1}(\lambda\theta)}{\lambda}=0
\]
 uniformly in $\theta\in\SS^{n-1}$, or equivalently $\phi_{2}(x)-\phi_{1}(x)=o\left(\left|x\right|\right)$
as $\left|x\right|\to\infty$. This shows that $\zeta_{R}$ is compactly
supported. 

We can bound $\zeta_{R}$ from above and from below. From above, it
is clear that $\zeta_{R}\le R\left(\phi_{2}-\phi_{1}\right)$. From
below, we have $\zeta_{R}+\left|x\right|+R\phi_{1}\ge R\phi_{2}.$
It follows that 
\begin{align*}
F\left(\phi_{2}\right) & =\frac{1}{R}F\left(R\phi_{2}\right)\le\frac{1}{R}F\left(\zeta_{R}+\left|x\right|+R\phi_{1}\right)\\
 & =\frac{1}{R}\left(F\left(\zeta_{R}\right)+F\left(\left|x\right|\right)+R\cdot F(\phi_{1})\right)\\
 & =\frac{1}{R}\left(\int\zeta_{R}\dd\mu+F\left(\left|x\right|\right)+R\cdot F(\phi_{1})\right)\\
 & \le\int\left(\phi_{2}-\phi_{1}\right)\dd\mu+\frac{1}{R}F\left(\left|x\right|\right)+F(\phi_{1}).
\end{align*}
 Letting $R\to\infty$ we see that indeed $G(\phi_{2})\le G(\phi_{1})$
. 

This concludes the proof that $G(\phi_{1})=G(\phi_{2})$ under the
assumptions $\bd{\phi_{1}}=\bd{\phi_{2}}$ and $\phi_{1}\le\phi_{2}$.
But we can now get rid of this second assumption: If $\bd{\phi_{1}}=\bd{\phi_{2}}$
then we also have 
\[
\bd{\max\left(\phi_{1},\phi_{2}\right)}=\max\left(\bd{\phi_{1}},\bd{\phi_{2}}\right)=\bd{\phi_{1}}=\bd{\phi_{2}},
\]
 and since $\phi_{1},\phi_{2}\le\max\left(\phi_{1},\phi_{2}\right)$
it follows that $G(\phi_{1})=G\left(\max\left(\phi_{1},\phi_{2}\right)\right)=G(\phi_{2})$
. Hence $G(\phi)$ depends only on $\overline{\phi}$. 

Consider now the spaces 
\begin{align*}
E_{1} & =\left\{ h_{K}:\ K\in\KK^{n}\right\} \\
E_{2} & =\left\{ \bd{\phi}:\ \phi\in\cvx_{n}^{\ell}\right\} .
\end{align*}
 We have $E_{1}\subseteq E_{2}\subseteq C\left(\SS^{n-1}\right)$
since every support function $h_{K}:\SS^{n-1}\to\RR$ can also be
though of as a $1$-homogeneous convex function on $\RR^{n}$, and
under this identification we have $\bd{h_{K}}=h_{K}$. Note that we
can define $\widetilde{G}:E_{2}\to\RR$ by $\widetilde{G}(\bd{\phi})=G(\phi)$,
since we just proved that this definition does not depend on the choice
of $\phi$. 

Since the restriction $\left.\widetilde{G}\right|_{E_{1}}:E_{1}\to\RR$
is clearly linear and increasing, Theorem \ref{thm:riesz-bodies}
implies that exists a finite Borel measure $\nu$ on $\SS^{n-1}$
such that 
\begin{equation}
\widetilde{G}(h_{k})=\int_{\SS^{n-1}}h_{K}\dd\nu.\label{eq:body-repr}
\end{equation}
Moreover, in the proof of Theorem \ref{thm:riesz-bodies} we saw that
$\left.\widetilde{G}\right|_{E_{1}}$ has a unique extension to $C\left(\SS^{n-1}\right)$
which is linear and increasing, and by inspecting the proof we see
that an extension to $E_{2}$ also has to be unique. But $\widetilde{G}$
itself is such an extension, and so is the map $\bd{\phi}\mapsto\int_{\SS^{n-1}}\bd{\phi}\dd\nu$.
Therefore they must coincide, which means that 
\[
G(\phi)=\widetilde{G}(\overline{\phi})=\int_{\SS^{n-1}}\bd{\phi}\dd\nu
\]
 for all $\phi\in\cvx_{n}^{\ell}$. But then 
\[
F(\phi)=\int_{\RR^{n}}\phi\dd\mu+G(\phi)=\int_{\RR^{n}}\phi\dd\mu+\int_{\SS^{n-1}}\bd{\phi}\dd\nu,
\]
 finishing the proof of existence. 

Finally, we need to show that $\mu$ and $\nu$ are uniquely defined.
Towards this goal assume that 
\[
\int_{\RR^{n}}\phi\dd\mu_{1}+\int_{\SS^{n-1}}\bd{\phi}\dd\nu_{1}=\int_{\RR^{n}}\phi\dd\mu_{2}+\int_{\SS^{n-1}}\bd{\phi}\dd\nu_{2}
\]
 for all $\phi\in\cvx_{n}^{\ell}$. Fix $K\in\KK^{n}$ with $0\in K$
and for $R>0$ define $\phi_{R}=\max\left\{ h_{K}-R,0\right\} \in\cvx_{n}^{\ell}$.
Then $\bd{\phi_{R}}=h_{K}$, so 
\[
\int_{\RR^{n}}\phi_{R}\dd\mu_{1}+\int_{\SS^{n-1}}h_{K}\dd\nu_{1}=\int_{\RR^{n}}\phi_{R}\dd\mu_{2}+\int_{\SS^{n-1}}h_{K}\dd\nu_{2}
\]
 for all $R>0$. But clearly $\phi_{R}\to0$ pointwise as $R\to\infty$
so by the monotone convergence theorem it follows that $\int_{\SS^{n-1}}h_{K}\dd\nu_{1}=\int_{\SS^{n-1}}h_{K}\dd\nu_{2}$.
The same will also be true without the assumption that $0\in K$,
since for every $K\in\KK^{n}$ and every $r>0$ we have $h_{K}=h_{K+rB_{2}^{n}}-h_{rB_{2}^{n}}$,
and for large enough $r>0$ we do have $0\in K+rB_{2}^{n}$. It now
follows from the uniqueness part of Theorem \ref{thm:riesz-bodies}
that $\nu_{1}=\nu_{2}$. We can then repeat the argument from Theorem
\ref{thm:riesz-cvxf} to show that also $\mu_{1}=\mu_{2}$. 
\end{proof}
From here Theorem \ref{thm:riesz-cvx-g} (and Theorem \ref{thm:riesz-functions-general})
is a simple corollary:
\begin{proof}[Proof of Theorem \ref{thm:riesz-cvx-g}]
Given $F:\cvx_{n}\to(-\infty,\infty]$ , the restriction $\left.F\right|_{\cvx_{n}^{\ell}}$
satisfies the assumptions of Theorem \ref{thm:repr-lin-growth} (we
need Lemma \ref{lem:lin-finite} here to know that $\left.F\right|_{\cvx_{n}^{\ell}}$
is always finite).

From Theorem \ref{thm:repr-lin-growth} there exists $\mu$ and $\nu$
of the required form such that 
\[
F(\phi)=\int_{\RR^{n}}\phi\dd\mu+\int_{\SS^{n-1}}\bd{\phi}\dd\nu
\]
 for all $\phi\in\cvx_{n}^{\ell}$. For an arbitrary $\phi\in\cvx_{n}$
we use Lemma \ref{lem:lin-growth-approx} and choose a sequence $\left\{ \phi_{k}\right\} _{k=1}^{\infty}\subseteq\cvx_{n}^{\ell}$
such that $\phi_{k}^{\ast}\downarrow\phi^{\ast}$ . We then have 
\begin{align*}
F(\phi) & =\lim_{k\to\infty}F(\phi_{k})=\lim_{k\to\infty}\left(\int_{\RR^{n}}\phi_{k}\dd\mu+\int_{\SS^{n-1}}\bd{\phi_{k}}\dd\nu\right)\\
 & =\int_{\RR^{n}}\phi\dd\mu+\int_{\SS^{n-1}}\bd{\phi}\dd\nu,
\end{align*}
 where we used Proposition \ref{prop:continuity} in the last equality. 

Uniqueness of $\mu$ and $\nu$ is obvious since by Theorem \ref{thm:repr-lin-growth}
even the restriction $\left.F\right|_{\cvx_{n}^{\ell}}$ determines
$\mu$ and $\nu$ uniquely. 
\end{proof}
We conclude this section with two examples that show the extra conditions
imposed in Theorem \ref{thm:riesz-functions-general} are indeed necessary:
\begin{example}
Define $F:\lc_{n}\to(-\infty,\infty]$ by 
\[
F(f)=\begin{cases}
0 & \text{If }f=c\cdot\oo_{\left\{ p\right\} }\text{ for }c\ge0\text{ and }p\in\RR^{n}\\
+\infty & \text{Otherwise}.
\end{cases}
\]
$F$ is linear and increasing, but is not of the form \eqref{eq:riesz-func-repr}.
The reason of course is that $F$ is essentially ``lower dimensional'',
i.e. $F(f)=\infty$ for every $f\in\lc_{n}$ with $\int f>0$.
\end{example}

\begin{example}
Without the weak continuity assumption of Theorem \ref{thm:riesz-functions-general}
one can create less explicit counterexamples. For example, fix a free
ultrafilter $\mathcal{U}$ on the natural numbers $\NN$, a direction
$\theta\in\SS^{n-1}$ and a number $\alpha>1$ and consider the functional
$F:\lc_{n}\to(-\infty,\infty]$ defined by 
\[
F(f)=\lim_{\mathcal{U}}\frac{h_{f}(n\theta)}{n^{\alpha}}.
\]
 Since ultralimits are linear and increasing, it follows that $F$
is also linear and increasing. However, $F$ is not of the form \eqref{eq:riesz-func-repr}
as it is not continuous: For every $f\in\lc_{n}$ with $h_{f}\in\cvx_{n}^{\ell}$
we have $F(f)=0$, but $F$ is not identically $0$. 
\end{example}

\section{\label{sec:surface-area}Surface area measures}

As was explained in the introduction, linear functionals appear naturally
in convex geometry as the derivatives of more general functionals.
For convex bodies, the simplest choice of such a functional was the
volume, i.e. $G(K)=\left|K\right|$. For log-concave functions, the
simplest choice is to consider the functional $G:\lc_{n}\to[0,\infty]$
defined by $G(f)=\int f$. We now take its derivative:
\begin{defn}
For $f,g\in\lc_{n}$ such that $0<\int g<\infty$, we define 
\[
\delta(g,f)=\lim_{t\to0^{+}}\frac{\int\left(g\star\left(t\cdot f\right)\right)-\int g}{t}.
\]
\end{defn}

This first variation of the integral was first systematically studied
by Colesanti and Fragalà in \cite{Colesanti2013}, who showed that
the limit in the definition exists and belongs to $(-\infty,\infty]$
. 

If we now consider the functional $F_{g}:\lc_{n}\to(-\infty,\infty]$
defined by $F_{g}(f)=\delta(g,f)$, it is clear from the definition
that $F_{g}$ is increasing. In the case of convex bodies we have
the theory of mixed volumes available to us, which implies that functionals
analogous to $F_{g}$ are also linear. For log-concave functions there
is no such theory, so while we expect $F_{g}$ to be linear as a directional
derivative, this is not obvious. For this reason one cannot use a
result such as Theorem \ref{thm:riesz-functions-general} directly.
However, Theorem \ref{thm:riesz-functions-general} still gives us
a clue for the type of formulas we expect. Under technical conditions
such a formula was proved in \cite{Colesanti2013}. We now describe
this result.
\begin{defn}
\label{def:sa-measure}Fix $g\in\lc_{n}$ with $0<\int g<\infty$
and write $\psi=-\log g$. We define the Borel measure $\mu_{g}$
on $\RR^{n}$ by $\left(\nabla\psi\right)_{\sharp}\left(g\dd x\right)$,
where $\sharp$ denotes the push-forward. Similarly the measure $\nu_{g}$
is the Borel measure on $\SS^{n-1}$ defined by 
\[
\nu_{g}=\left(n_{K_{g}}\right)_{\sharp}\left(\left.g\HH^{n-1}\right|_{\partial K_{g}}\right).
\]
Here $n_{K_{g}}:\partial K_{g}\to\SS^{n-1}$ is the Gauss map (mapping
every point $x\in\partial K_{g}$ to the normal to $K_{g}$ at the
point $x$), and $\HH^{n-1}$ denotes the $(n-1)$-Hausdorff measure. 
\end{defn}

Note that we need no regularity assumptions on $g\in\lc_{n}$ in order
to define $\mu_{g}$ and $\nu_{g}$. Indeed, it is well-known $\nabla\psi$
exists (Lebesgue) almost everywhere on $K_{g}$, so the push-forward
is always well defined. Similarly, the Gauss map $n_{K_{g}}$ is always
defined $\mathcal{H}^{n-1}$-almost everywhere on $\partial K_{g}$.
However, the theorem of \cite{Colesanti2013} does require significant
regularity assumptions on our functions.

For the statement of the theorem, let us say that a function $f\in\lc_{n}$
is \emph{sufficiently regular }if:
\begin{enumerate}
\item The support $K_{f}$ of $f$ is a $C^{2}$ smooth convex body with
everywhere positive Gauss curvature.
\item The function $\psi=-\log f$ is continuous in $K_{f}$, $C^{2}$-smooth
in the interior of $K_{f}$, and has a strictly positive-definite
Hessian. 
\item $\lim_{x\to\partial K_{f}}\left|\nabla\psi(x)\right|=\infty$ .
\end{enumerate}
We can now state:
\begin{thm}[Colesanti–Fragalà]
\label{thm:colesanti-fragala}Assume $f,g\in\lc_{n}$ are sufficiently
regular. Assume further that $h_{g}-c\cdot h_{f}$ is convex for some
$c>0$. Then 
\[
\delta(g,f)=\int_{\RR^{n}}h_{f}\dd\mu_{g}+\int_{\SS^{n-1}}h_{K_{f}}\dd\nu_{g}.
\]
 
\end{thm}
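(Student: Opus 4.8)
```latex
The plan is to prove Theorem \ref{thm:colesanti-fragala} by a direct computation of the first variation $\delta(g,f)$, decomposing the integral $\int g \star (t \cdot f)$ into the contribution from the (growing) support and the contribution from the change in the function on the interior, and then identifying each piece with one of the two surface-area-type measures. The key observation is that under the embedding $\psi = -\log g$, $\phi = -\log f$, the sup-convolution becomes the infimal convolution on the convex side: $-\log\big(g \star (t\cdot f)\big) = \psi \,\square\, (t \cdot \phi)$ where $(t\cdot\phi)(x) = t\,\phi(x/t)$, i.e. the inf-convolution with the epigraph of $\phi$ scaled by $t$. Equivalently, working with support functions, $h_{g \star (t\cdot f)} = h_g + t\, h_f$, so the whole deformation is encoded by adding $t\, h_f$ to $h_g$ in the Legendre picture. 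The regularity hypotheses (smooth body $K_g$ with positive Gauss curvature, smooth strictly convex $\psi$ on the interior with $|\nabla\psi| \to \infty$ at $\partial K_g$, and $h_g - c\, h_f$ convex) are exactly what is needed to guarantee that the relevant Legendre transforms and change-of-variables are valid and that differentiation under the integral sign is justified.

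First I would set up the change of variables $y = \nabla\psi(x)$, which by the assumptions is a diffeomorphism from $\mathrm{int}\,K_g$ onto $\RR^n$ (here $|\nabla\psi|\to\infty$ at the boundary is what makes the image all of $\RR^n$), and write $\int g = \int_{\RR^n} e^{-\psi(x)}\,dx = \int_{\RR^n} e^{-\psi^*(y) - \langle \text{stuff}\rangle}\det(\nabla^2\psi^*(y))\,dy$ after the substitution; here $\psi^* = h_g$. Then, since $h_{g\star(t\cdot f)} = h_g + t h_f = \psi^* + t h_f$, one computes $\int g\star(t\cdot f)$ by the same substitution $y \mapsto \nabla(\psi^* + t h_f)(y) = \nabla\psi^*(y) + t\nabla h_f(y)$ and expands to first order in $t$. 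Differentiating at $t = 0$, one gets a single integral over $\RR^n$ in the $y$-variable. The next step is to split this integral: the part of $\RR^n$ corresponding (under $\nabla\psi^*$) to the interior of $K_g$ where $h_g$ is finite and smooth produces, after changing back to $x = \nabla h_g(y) = \nabla\psi^*(y)$, the term $\int_{\RR^n} h_f\, d\mu_g$ with $\mu_g = (\nabla\psi)_\sharp(g\,dx)$; a careful tracking of the Jacobian shows precisely this pushforward appears. The remaining part — the "boundary at infinity" in the $y$-variable, which corresponds to directions $\theta \in \SS^{n-1}$ along which $h_g(\lambda\theta)/\lambda \to h_{K_g}(\theta)$ — produces, via a boundary-layer analysis near $\partial K_g$ using the third regularity hypothesis, the surface term $\int_{\SS^{n-1}} h_{K_f}\, d\nu_g$; this is where the Gauss map and the Hausdorff measure $\mathcal{H}^{n-1}|_{\partial K_g}$ enter, since $\nu_g = (n_{K_g})_\sharp(g\,\mathcal{H}^{n-1}|_{\partial K_g})$.

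Concretely, the cleanest route for the boundary term is to foliate a neighborhood of $\partial K_g$ inside $K_g$ by level sets of the distance to $\partial K_g$ (or by the sets where $|\nabla\psi| = R$ for large $R$), compute the contribution of the annular region $\{|\nabla\psi| \geq R\}$ to $\frac{d}{dt}\big|_{0}\int g\star(t\cdot f)$, and show it converges as $R \to \infty$ to $\int_{\SS^{n-1}} h_{K_f}\, d\nu_g$. The point is that as $x \to x_0 \in \partial K_g$ along the inward normal, the contribution of $h_f$ evaluated at the rescaled gradient $\nabla h_f(\nabla\psi(x))$ tends to $\bd{h_f}(n_{K_g}(x_0)) = h_{K_f}(n_{K_g}(x_0))$ by the Proposition relating $\bd{\phi}$ to $h_{K_f}$, while the mass $g(x)\,dx$ in the thin layer near $x_0$ integrates against $\mathcal{H}^{n-1}$ on $\partial K_g$ weighted by $g$. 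The fact that $g$ (not $e^{-\psi^*}$) appears is because the boundary values of $g = e^{-\psi}$ on $\partial K_g$ are finite and positive (by continuity of $\psi$ on $K_g$), so the Hausdorff measure on $\partial K_g$ is genuinely weighted by $g|_{\partial K_g}$.

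The main obstacle I expect is making the interchange of $\lim_{t\to 0^+}$ and the integral rigorous, together with the boundary-layer limit $R\to\infty$ — i.e. producing uniform (in $t$ near $0$ and in the spatial variable) integrable dominating bounds. The hypothesis that $h_g - c\, h_f$ is convex for some $c > 0$ is precisely the tool for this: it forces $h_f \leq \frac{1}{c} h_g$ in a way that makes $h_f$ controlled by $h_g$ uniformly, so that the difference quotient $\frac{1}{t}(e^{-(\psi^* + t h_f)^*} - e^{-\psi^*{}^*})$ pointwise is dominated by something integrable against Lebesgue measure, and similarly the rescaled gradient terms stay bounded. A secondary technical point is verifying that $\nabla(h_g + t h_f)$ remains a diffeomorphism onto $\RR^n$ for small $t$ — again guaranteed by the convexity of $h_g - c h_f$ and the strict convexity / nondegenerate Hessian hypothesis on $\psi$ (equivalently on $h_g$) — so that the change of variables is legitimate for the perturbed function and not merely at $t = 0$. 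Once these domination and diffeomorphism facts are in place, the identification of the two limiting integrals with $\int h_f\, d\mu_g$ and $\int h_{K_f}\, d\nu_g$ is a matter of bookkeeping with the definitions in Definition \ref{def:sa-measure}.
```
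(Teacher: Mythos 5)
A point of orientation first: Theorem \ref{thm:colesanti-fragala} is not proved in this paper at all --- it is imported from Colesanti and Fragal\`a \cite{Colesanti2013} and used only to show that functionals of the form \eqref{eq:riesz-func-repr} occur ``in nature''. So there is no in-paper proof to compare against, and your text has to be judged as a sketch of the cited result. As such it follows the natural (and essentially the original) strategy: pass to $\psi=-\log g$, use $h_{g\star(t\cdot f)}=h_g+t\,h_f$, change variables by the gradient map $\nabla\psi$ (equivalently $\nabla h_g$), expand $\int g\star(t\cdot f)$ to first order in $t$, and split the derivative into a bulk term giving $\int_{\RR^n}h_f\,\dd\mu_g$ and a boundary-at-infinity term giving $\int_{\SS^{n-1}}h_{K_f}\,\dd\nu_g$. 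But the proposal is an outline rather than a proof, and it contains one concrete error: convexity of $h_g-c\,h_f$ does \emph{not} force $h_f\le\frac1c h_g$. For instance $h_f=h_g+\oo$ gives $h_g-h_f\equiv-1$, which is convex, while $h_f>h_g$ everywhere. All you can extract is that $c\,h_f\le h_g-\ell$ for some affine function $\ell$ (a finite convex function is bounded below by an affine function), so the dominating bounds you need in order to differentiate under the integral sign must be built from this weaker control; this is exactly where the hypothesis does its real work (it is what makes $\delta(g,f)$ finite at all --- in general the first variation can equal $+\infty$), and the sketch does not actually produce the domination.

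The larger gap is the boundary term, which you dismiss as ``a matter of bookkeeping''. It is the heart of the theorem and the place where all three regularity hypotheses ($C^2$ boundary of $K_g$ with positive Gauss curvature, positive-definite Hessian of $\psi$, and $\lim_{x\to\partial K_g}\left|\nabla\psi(x)\right|=\infty$) are genuinely used. After the change of variables, differentiating the Jacobian factor $\det\nabla^2\bigl(h_g+t\,h_f\bigr)$ at $t=0$ produces a term of the form $\det(\nabla^2 h_g)\operatorname{tr}\bigl((\nabla^2 h_g)^{-1}\nabla^2 h_f\bigr)$, and turning this (plus the zeroth-order terms) into $\int_{\partial K_g}h_{K_f}\bigl(n_{K_g}(x)\bigr)g(x)\,\dd\HH^{n-1}(x)$ requires either integration by parts with a careful account of the flux at infinity, or the boundary-layer analysis on $\{\left|\nabla\psi\right|\ge R\}$ that you propose --- and in either version one must prove (i) that the limit of these contributions exists, (ii) that the limiting density on $\partial K_g$ with respect to $\HH^{n-1}$ is exactly $g|_{\partial K_g}$ and not some other boundary weight, and (iii) that $h_f(\,\cdot\,)$ evaluated along $\nabla\psi(x)\to\infty$ converges, after the correct normalization, to $\bd{h_f}(n_{K_g}(x))=h_{K_f}(n_{K_g}(x))$ uniformly enough to pass to the limit. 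None of this is carried out; the curvature and gradient-blow-up assumptions are cited as ``exactly what is needed'' without showing where they enter. A secondary omission: you assert that $\nabla(h_g+t\,h_f)$ is a diffeomorphism onto the interior of the support of $g\star(t\cdot f)$ for small $t>0$, but give no argument (the issue is smoothness and strict convexity of $h_g+t\,h_f$ together with surjectivity of its gradient, which again needs the regularity of $f$ and $g$, not the convexity of $h_g-c\,h_f$). Until these steps are supplied, the proposal names the right objects but does not establish the formula.
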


From this theorem we see that functionals of the form \eqref{eq:riesz-func-repr}
do appear ``in nature''. This explains why are interested in results
like Theorem \ref{thm:riesz-functions-general} and are not satisfied
with the simpler Theorem \ref{thm:riesz-functions-sc}.

The technical assumptions in Theorem \ref{thm:colesanti-fragala}
are known to be non-optimal. Proving the same result for every $f,g\in\lc_{n}$
is an interesting problem outside the scope of this paper. However,
we do understand completely the case $\nu_{g}\equiv0$:
\begin{thm}[\cite{Rotem2021}]
\label{thm:repr-ess}Fix $g\in\lc_{n}$ such that $0<\int g<\infty$.
Then the following are equivalent:
\begin{enumerate}
\item $g$ is essentially continuous, i.e. the set $\left\{ x\in\RR^{n}:\ g\text{ is not continuous at x}\right\} $
has zero $\mathcal{H}^{n-1}$ measure. 
\item For every $f\in\lc_{n}$ one has $\delta(g,f)=\int_{\RR^{n}}h_{f}\dd\mu_{g}$.
\end{enumerate}
\end{thm}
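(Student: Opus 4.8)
The plan is to prove the theorem by establishing both implications, with the $(1)\Rightarrow(2)$ direction being the substantive one. The key insight is that Theorem \ref{thm:colesanti-fragala} already gives us the desired formula for \emph{sufficiently regular} pairs, so the strategy should be to approximate an arbitrary essentially continuous $g$ by sufficiently regular functions $g_k$, approximate an arbitrary $f$ by sufficiently regular $f_k$, and pass to the limit. The terms $\int h_{K_f}\,\dd\nu_{g_k}$ should drop out because essential continuity of $g$ forces $\nu_{g_k}\to 0$ in an appropriate sense — the boundary contribution to the surface area measure vanishes precisely when $g$ does not jump across $\partial K_g$ on a set of positive $\HH^{n-1}$-measure. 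The main technical work is controlling the convergence $\delta(g_k,f_k)\to\delta(g,f)$ and $\int h_{f_k}\,\dd\mu_{g_k}\to\int h_f\,\dd\mu_g$ simultaneously.

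For the implication $(1)\Rightarrow(2)$, first I would fix $g$ essentially continuous with $0<\int g<\infty$ and fix an arbitrary $f\in\lc_n$. The first reduction is to the case where $f$ is sufficiently regular: one can write $f$ as an increasing limit (in the sup-convolution sense, i.e. $h_{f_k}\uparrow h_f$) of sufficiently regular functions, use the fact that $\delta(g,\cdot)$ and $\int(\cdot)\,\dd\mu_g$ are both monotone and behave well under such limits (this is where the linearity/monotonicity structure, together with a monotone convergence argument, enters), and reduce to proving the formula when $f$ is regular. The second and harder reduction is to approximate $g$ itself: one needs a family $g_k$ of sufficiently regular functions with $g_k\to g$ in a sense strong enough that $\mu_{g_k}\rightharpoonup\mu_g$ weakly, $\nu_{g_k}\to 0$ (using essential continuity of $g$ crucially here), and $\delta(g_k,f)\to\delta(g,f)$. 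Then Theorem \ref{thm:colesanti-fragala} applied to each pair $(g_k,f)$ gives $\delta(g_k,f)=\int h_f\,\dd\mu_{g_k}+\int h_{K_f}\,\dd\nu_{g_k}$, and passing to the limit yields $\delta(g,f)=\int h_f\,\dd\mu_g$.

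For the converse $(2)\Rightarrow(1)$, I would argue by contrapositive: if $g$ is not essentially continuous, then $\HH^{n-1}$-almost everywhere on $\partial K_g$ the function $g$ jumps, so $\nu_g$ is a nonzero measure on $\SS^{n-1}$. Then I would exhibit a specific test function $f$ — for instance one with $K_f$ chosen so that $\int h_{K_f}\,\dd\nu_g\neq 0$ while the formula of Theorem \ref{thm:colesanti-fragala} (or a suitably general version valid for this $g$ and this nice $f$) applies, producing $\delta(g,f)=\int h_f\,\dd\mu_g+\int h_{K_f}\,\dd\nu_g\neq\int h_f\,\dd\mu_g$. Some care is needed because Theorem \ref{thm:colesanti-fragala} as stated requires $g$ itself to be sufficiently regular; one either invokes the more general form from \cite{Rotem2021} or arranges the approximation so that the boundary term survives in the limit.

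The hard part will be the second reduction in $(1)\Rightarrow(2)$: constructing the regularizing family $g_k$ and proving the three simultaneous convergences, especially $\nu_{g_k}\to 0$. Essential continuity says the jump set of $g$ on $\partial K_g$ is $\HH^{n-1}$-null, but the regularized $g_k$ will have full support on a slightly shrunk body and will decay rapidly near its boundary; one must show the boundary integral $\int_{\partial K_{g_k}} g_k\,\dd\HH^{n-1}$ tends to $0$, which requires quantitative control relating the rate of decay of $g_k$ near $\partial K_{g_k}$ to the measure of the near-boundary region and to the (small) boundary values of $g$. Simultaneously one needs $\delta(g_k,f)\to\delta(g,f)$, which is a statement about stability of a first variation under perturbation of the base point — this is delicate because $\delta$ can be $+\infty$ and the convergence is one-sided in general. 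Handling these estimates carefully, rather than the structural Riesz-theoretic part, is where the real effort lies.
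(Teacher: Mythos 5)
This statement is not proved in the paper at all: it is quoted from the external reference \cite{Rotem2021}, so there is no internal proof to compare against, and your proposal has to stand on its own. As it stands it is a plan rather than a proof, and the plan has gaps that are not merely technical. First, your reduction to regular $f$ relies on $\delta(g,\cdot)$ behaving linearly and passing through monotone limits $h_{f_k}\uparrow h_f$; but the paper itself stresses (Section \ref{sec:surface-area}) that linearity of $F_g=\delta(g,\cdot)$ is \emph{not} known for log-concave functions --- there is no mixed-volume theory to appeal to --- so you cannot invoke that structure, and continuity of $\delta(g,\cdot)$ under such limits is likewise unproven (indeed $\delta$ can be $+\infty$ and the limit is only one-sided). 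Second, Theorem \ref{thm:colesanti-fragala} carries the hypothesis that $h_g-c\cdot h_f$ is convex for some $c>0$; your scheme applies it to arbitrary pairs $(g_k,f)$ (or $(g_k,f_k)$) and never arranges this compatibility, which cannot be done for arbitrary $f\in\lc_n$, so the very step that feeds the approximation is not available. Third, the three simultaneous convergences $\mu_{g_k}\rightharpoonup\mu_g$, $\nu_{g_k}\to0$, and above all $\delta(g_k,f)\to\delta(g,f)$ are exactly the content of the theorem in disguise; you correctly flag them as ``the hard part'' but give no argument, and stability of the first variation under perturbation of the base function $g$ is precisely the kind of statement that fails without quantitative input.

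The converse direction has the same circularity: to conclude $\delta(g,f)\ne\int h_f\,\dd\mu_g$ from $\nu_g\ne0$ you need a representation of $\delta(g,f)$ valid for the given \emph{irregular} $g$, and you acknowledge that you would ``invoke the more general form from \cite{Rotem2021}'' --- but that is the theorem being proven. (Also, failure of essential continuity only means the jump set has \emph{positive} $\HH^{n-1}$-measure, not full measure, though the conclusion $\nu_g\ne0$ is still correct.) In short, the structural outline (regularize, apply Colesanti--Fragal\`a, pass to the limit, with essential continuity killing the boundary term) is a reasonable first guess, but every load-bearing step --- linearity/limit behavior of $\delta(g,\cdot)$, applicability of Theorem \ref{thm:colesanti-fragala}, the construction of $g_k$ with the required convergences, and a non-circular argument for $(2)\Rightarrow(1)$ --- is missing, so the proposal does not constitute a proof.
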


To better understand the notion of essential continuity, note that
as a log-concave function $g$ is automatically continuous outside
of $\partial K_{g}$. Moreover, since $g$ is upper semi-continuous
it is easy to check that $g$ is continuous at a point $x\in\partial K_{g}$
if and only if $g(x)=0$. Therefore $g$ is essentially continuous
if and only if $g$ vanishes $\mathcal{H}^{n-1}$-almost everywhere
on $\partial K_{g}$. Equivalently, $g$ is essentially continuous
if and only if $\nu_{g}\equiv0$. 

The importance of essential continuity in the context of surface area
measures was first realized by Cordero-Erausquin and Klartag in \cite{Cordero-Erausquin2015}.
In this paper the authors studied the measure $\mu_{g}$ from Definition
\ref{def:sa-measure} in a different language: If $g=e^{-\psi}$ then
$\mu_{g}$ is called there the moment measure of $\psi$. The main
result of \cite{Cordero-Erausquin2015} is the following theorem which
characterizes measures of the form $\mu_{g}$:
\begin{thm}[Cordero-Erausquin, Klartag]
\label{thm:minkowski}Let $\mu$ be a Borel measure on $\RR^{n}$.
Then $\mu=\mu_{g}$ for an essentially continuous function $g\in\lc_{n}$
if and only if $\mu$ satisfies the following properties:
\begin{enumerate}
\item $0<\mu\left(\RR^{n}\right)<\infty$ .
\item $\mu$ has a finite first moment, and $\int_{\RR^{n}}x\dd\mu=0$. 
\item $\mu$ is not concentrated on any lower dimensional subspace (i.e.
for every proper linear subspace $H$ of $\RR^{n}$ we have $\mu(H)<\mu\left(\RR^{n}\right)$
). 
\end{enumerate}
\end{thm}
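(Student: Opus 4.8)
I would prove the equivalence by handling its two implications separately and by quite different means. For ``$\mu=\mu_{g}$ with $g$ essentially continuous $\Rightarrow$ (1)--(3)'' I would argue by a divergence-theorem computation with $\psi=-\log g$. The total mass is $\mu_{g}(\RR^{n})=\int g\in(0,\infty)$ by hypothesis. The distributional gradient of $g$ is $-(\nabla\psi)\,g$ on the interior of $K_{g}$, plus a singular part $-g\,n_{K_{g}}$ carried by $\partial K_{g}$; \emph{essential continuity} is exactly what kills that singular part (it says $g=0$ $\HH^{n-1}$-a.e.\ on $\partial K_{g}$), and since a log-concave integrable function decays exponentially we get $\int_{\RR^{n}}\nabla g=0$. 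Hence $\int y\,\dd\mu_{g}(y)=\int(\nabla\psi)\,g=-\int\nabla g=0$, which is (2), and the first moment is finite by the same estimate since convexity gives $\left|\nabla\psi(x)\right|\le\sup_{\left|z-x\right|\le1}\psi(z)-\psi(x)$, which is integrable against the exponentially small $g$. Finally, if $\mu_{g}$ lived on a hyperplane $\theta^{\perp}$ then $\langle\nabla\psi,\theta\rangle=0$ holds $g\,\dd x$-a.e., i.e.\ $\psi$ is constant in the direction $\theta$ on the interior of $K_{g}$; integrability of $g$ then forces the $\theta$-fibres of $K_{g}$ to be bounded segments, and by lower semicontinuity $g$ is then positive on a portion of $\partial K_{g}$ of positive $\HH^{n-1}$ measure --- contradicting essential continuity. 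This is (3).

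For the converse --- the actual construction of $g$ --- I would use a variational principle. Normalizing $\mu(\RR^{n})=1$ (the general case follows by scaling $g$), consider on a suitable class of convex lower semicontinuous $\psi$ with $0<\int e^{-\psi}<\infty$ the functional
\[
\widetilde{J}(\psi)=\log\int_{\RR^{n}}e^{-\psi}\,\dd x-\int_{\RR^{n}}\psi^{\ast}\,\dd\mu\in[-\infty,\infty),
\]
which is invariant under $\psi\mapsto\psi+c$. A first-variation computation, using $\frac{\dd}{\dd t}(\psi+t\eta)^{\ast}(y)\big|_{t=0}=-\eta(\nabla\psi^{\ast}(y))$, shows that at a critical point $(\nabla\psi^{\ast})_{\sharp}\mu$ is proportional to $e^{-\psi}\dd x$, which after the normalizing shift $\psi\mapsto\psi+\log\int e^{-\psi}$ becomes exactly the moment measure equation $(\nabla\psi)_{\sharp}(e^{-\psi}\dd x)=\mu$. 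The decisive feature of this functional is that it is \emph{concave along sup-convolution interpolations}: along $\psi_{t}$ defined by $\psi_{t}^{\ast}=(1-t)\psi_{0}^{\ast}+t\psi_{1}^{\ast}$ --- equivalently $e^{-\psi_{t}}=(1-t)\cdot e^{-\psi_{0}}\star t\cdot e^{-\psi_{1}}$ --- the term $\int\psi_{t}^{\ast}\,\dd\mu$ is affine in $t$ while $t\mapsto\log\int e^{-\psi_{t}}$ is concave by the Pr\'ekopa--Leindler inequality. Hence every critical point of $\widetilde{J}$ is a global maximizer, and chasing the equality case back through Pr\'ekopa--Leindler (Dubuc's equality characterization) shows that any two maximizers differ by a translation --- which gives the asserted uniqueness of $g$.

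The real work is then to produce a maximizer, and the crucial point is that $\widetilde{J}$ is bounded above. Normalizing also $\inf\psi=0$, so that $\psi^{\ast}\ge0$ with $\psi^{\ast}(0)=0$, one must show that $\int\psi^{\ast}\,\dd\mu$ grows at least as fast as the ``size'' of $\psi^{\ast}$ --- say as fast as $\max_{\theta\in\SS^{n-1}}\bd{\psi^{\ast}}(\theta)$ --- while $\log\int e^{-\psi}$ grows only logarithmically in it. This is where hypothesis (3) is indispensable: ``$\mu$ not concentrated on a hyperplane'' yields, via a compactness argument over $\theta\in\SS^{n-1}$, a constant $c(\mu)>0$ with $\int_{\RR^{n}}(\langle y,\theta\rangle)^{+}\,\dd\mu(y)\ge c(\mu)$ for every $\theta$, which is precisely the quantitative non-degeneracy needed; hypothesis (2) makes affine perturbations of $\psi^{\ast}$ neutral, and the finite first moment keeps $\int\psi^{\ast}\,\dd\mu$ finite on the admissible class. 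Granting these a priori bounds, one extracts along a maximizing sequence --- with translations fixed using the barycenter normalization, so that convex functions of controlled growth become precompact --- a limit $\psi_{\ast}$, and uses the continuity of $\widetilde{J}$ along it. It then remains to (a) justify the Euler--Lagrange equation for the merely convex $\psi_{\ast}$, testing against enough convex perturbations to conclude $\mu_{g_{\ast}}=\mu$ as Borel measures, and (b) prove that $g_{\ast}=e^{-\psi_{\ast}}$ is essentially continuous: if it were not, then $g_{\ast}>0$ on an $\HH^{n-1}$-positive part of $\partial K_{g_{\ast}}$, and ``softening the wall'' of $\psi_{\ast}$ there --- replacing its jump to $+\infty$ by a steep finite slope on a thin shell --- would strictly raise $\int e^{-\psi}$ while perturbing $\psi^{\ast}$, hence $\int\psi^{\ast}\,\dd\mu$, by an arbitrarily small amount, contradicting maximality.

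I expect the two genuine obstacles to be exactly the coercivity estimate --- converting the qualitative hypothesis (3) into the bound $\int(\langle y,\theta\rangle)^{+}\,\dd\mu(y)\ge c(\mu)$ and using it to dominate $\log\int e^{-\psi}$ --- and the essential-continuity step; everything else is convex analysis together with Pr\'ekopa--Leindler and its equality case. As an alternative route to existence one could first solve the associated Monge--Amp\`{e}re equation $e^{-\psi(x)}=\rho(\nabla\psi(x))\det\nabla^{2}\psi(x)$ when $\mu=\rho\,\dd x$ has a smooth positive density on a bounded convex domain, via elliptic regularity, and then approximate a general $\mu$ satisfying (1)--(3) by such measures and pass to the limit; I would nonetheless prefer the variational route, which handles all admissible $\mu$ uniformly and uses no regularity theory.
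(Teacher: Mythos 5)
There is nothing in the paper to compare against: Theorem \ref{thm:minkowski} is not proved here, it is imported verbatim from Cordero-Erausquin and Klartag \cite{Cordero-Erausquin2015} (with Santambrogio's optimal-transport argument \cite{Santambrogio2016} mentioned as an alternative). What you have written is, at the level of strategy, exactly the original variational proof from that source: maximize $\widetilde{J}(\psi)=\log\int e^{-\psi}-\int\psi^{\ast}\dd\mu$, exploit concavity along sup-convolution interpolations via Pr\'ekopa--Leindler, and convert hypothesis (3) into the coercivity constant $\inf_{\theta\in\SS^{n-1}}\int\langle y,\theta\rangle^{+}\dd\mu>0$. Your necessity direction is essentially correct, with one local slip: the bound $\left|\nabla\psi(x)\right|\le\sup_{\left|z-x\right|\le1}\psi(z)-\psi(x)$ is vacuous whenever $x$ lies within distance $1$ of $\partial K_{g}$, where the supremum is $+\infty$; the finite first moment of $\mu_{g}$ (and the barycenter identity) is more safely obtained from the finite total variation of an integrable log-concave function, i.e.\ $\int\left|\nabla g\right|\dd x<\infty$, with essential continuity killing the boundary part of $Dg$ exactly as you say.

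As a proof of the existence direction, however, the proposal is an outline rather than an argument: the four points you defer --- the upper bound on $\widetilde{J}$, compactness of a maximizing sequence after normalizing translations, the Euler--Lagrange equation for a merely convex maximizer, and essential continuity of the maximizer --- are precisely the technical core of the cited paper, and none of them is carried out. One of these is not a routine verification: a measure $\mu$ satisfying (1)--(3) may be purely atomic or otherwise singular, so $\nabla\psi^{\ast}$ need not exist $\mu$-almost everywhere, and the first-variation formula you invoke, $\frac{\dd}{\dd t}(\psi+t\eta)^{\ast}(y)\big|_{t=0}=-\eta\left(\nabla\psi^{\ast}(y)\right)$, does not make sense as stated for the maximizers one actually obtains; the moment-measure equation has to be extracted in the primal variable, where $\nabla\psi$ exists Lebesgue-a.e., and this, together with the ``softening the wall'' step, is where the real difficulty sits. (The uniqueness-up-to-translation you mention is not part of the statement, so its omission or inclusion is immaterial.) In short: the approach is the right one --- indeed the same one the paper cites --- but what is written does not yet constitute a proof of the statement.
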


Santambrogio gave in \cite{Santambrogio2016} an alternative proof
of Theorem \ref{thm:minkowski} using methods of optimal transportation. 

Our Theorem \ref{thm:riesz-functions-sc} can be combined with the
known results mentioned above in order to characterize linear increasing
functionals as the first variation of the integral. Here is an example
of such a result:
\begin{thm}
Let $F:\lc_{n}^{sc}\to\RR$ be a linear and increasing functional.
Assume further that:
\begin{enumerate}
\item $F\left(\oo_{\left\{ p\right\} }\right)=0$ for every $p\in\RR^{n}$. 
\item For every line $\ell$ through the origin $F\left(e^{-\frac{1}{2}\left|x\right|^{2}}\oo_{\ell}\right)\ne0$. 
\end{enumerate}
Then there exists an essentially continuous log-concave function $g\in\lc_{n}$
such that $F(f)=\delta(g,f)$ for all $f\in\lc_{n}^{sc}$. 
\end{thm}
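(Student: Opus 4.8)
The plan is to combine the three representation results already available in the paper: first use Theorem~\ref{thm:riesz-functions-sc} to write $F$ as integration of support functions against a measure $\mu$, then recognize $\mu$ as a moment measure via Theorem~\ref{thm:minkowski}, and finally invoke Theorem~\ref{thm:repr-ess} to rewrite the resulting integral as the first variation $\delta(g,\cdot)$.

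Concretely, I would start by applying Theorem~\ref{thm:riesz-functions-sc} to the linear increasing functional $F$: this yields a unique positive finite Borel measure $\mu$ on $\RR^n$ with compact support such that $F(f)=\int_{\RR^n}h_f\,\dd\mu$ for all $f\in\lc_n^{sc}$. Compact support gives in particular a finite first moment, so it only remains to verify the three conditions appearing in Theorem~\ref{thm:minkowski}, and this is exactly where the two extra hypotheses on $F$ enter. For the barycenter condition I would test $F$ on the functions $\oo_{\{p\}}$, $p\in\RR^n$, which lie in $\lc_n^{sc}$ and whose support functions are the linear maps $h_{\oo_{\{p\}}}(y)=\langle p,y\rangle$ (a one-line computation of the Legendre transform of the convex indicator of $\{p\}$). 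Hypothesis~(1) then says $0=F(\oo_{\{p\}})=\langle p,\int_{\RR^n}y\,\dd\mu(y)\rangle$ for every $p$, forcing $\int_{\RR^n}y\,\dd\mu=0$. For the nondegeneracy conditions I would test $F$ on $g_u=e^{-\frac{1}{2}|x|^2}\oo_{\RR u}$ for unit vectors $u$; again $g_u\in\lc_n^{sc}$ (its support function is everywhere finite), and maximizing $\langle tu,y\rangle-\frac{1}{2}t^2$ over $t\in\RR$ gives $h_{g_u}(y)=\frac{1}{2}\langle u,y\rangle^2$. Hypothesis~(2) then reads $0\ne F(g_u)=\frac{1}{2}\int_{\RR^n}\langle u,y\rangle^2\,\dd\mu(y)$, and since the integrand is nonnegative this integral is strictly positive for every unit $u$. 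Specializing to a single $u$ shows $\mu(\RR^n)>0$, while the inequality for all $u$ shows that $\mu$ is not concentrated on any hyperplane through the origin, hence not on any proper linear subspace. Thus $\mu$ satisfies all hypotheses of Theorem~\ref{thm:minkowski}, so $\mu=\mu_g$ for some essentially continuous $g\in\lc_n$ with $0<\int g<\infty$.

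To finish, I would use that $g$ is essentially continuous and apply Theorem~\ref{thm:repr-ess}: for every $f\in\lc_n$, and in particular every $f\in\lc_n^{sc}$, one has $\delta(g,f)=\int_{\RR^n}h_f\,\dd\mu_g=\int_{\RR^n}h_f\,\dd\mu=F(f)$, which is the claim. There is no real obstacle in this argument --- it is essentially an exercise in unwinding definitions once the earlier theorems are granted. The only points that need a little care are checking that the probing functions $\oo_{\{p\}}$ and $g_u$ genuinely belong to $\lc_n^{sc}$ (equivalently, that their support functions are everywhere finite), computing those support functions correctly, and observing that ``not concentrated on any hyperplane through the origin'' is equivalent to condition~(3) of Theorem~\ref{thm:minkowski}.
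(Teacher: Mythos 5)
Your proposal is correct and follows essentially the same route as the paper: apply Theorem \ref{thm:riesz-functions-sc} to obtain the representing measure $\mu$, use hypothesis (1) tested on $\oo_{\{p\}}$ for the barycenter condition and hypothesis (2) tested on $e^{-\frac{1}{2}\left|x\right|^{2}}\oo_{\ell}$ for non-degeneracy, then invoke Theorems \ref{thm:minkowski} and \ref{thm:repr-ess}. The only cosmetic difference is that you phrase the non-concentration step as strict positivity of $\int_{\RR^{n}}\left\langle u,y\right\rangle ^{2}\dd\mu$, while the paper compares $\int_{\RR^{n}}h_{f}\dd\mu$ with $\int_{H}h_{f}\dd\mu$ --- the same argument.
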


Of course, the choice of the function $e^{-\frac{1}{2}\left|x\right|^{2}}\oo_{\ell}$
is rather arbitrary. As will be clear from the proof, this is a non-degeneracy
condition meant to exclude ``lower dimensional'' examples. 
\begin{proof}
Since $F$ is linear and increasing, by Theorem \ref{thm:riesz-functions-sc}
there exists a finite compactly supported Borel measure $\mu$ such
that $F(f)=\int_{\RR^{n}}h_{f}\dd\mu$ for all $f\in\lc_{n}^{sc}$. 

For every $p\in\RR^{n}$ we have 
\[
\left\langle \int_{\RR^{n}}x\dd\mu,p\right\rangle =\int_{\RR^{n}}\left\langle x,p\right\rangle \dd\mu=\int_{\RR^{n}}h_{\oo_{\left\{ p\right\} }}\dd\mu=F\left(\oo_{\left\{ p\right\} }\right)=0
\]
 by our assumptions. Hence $\int x\dd\mu=0$. 

Next, fix a hyperplane $H\subseteq\RR^{n}$ and define $\ell=H^{\perp}$.
Note that if $f=e^{-\frac{1}{2}\left|x\right|^{2}}\oo_{\ell}$ then
\[
h_{f}(x)=\sup_{y\in\ell}\left[\left\langle x,y\right\rangle -\frac{1}{2}\left|y\right|^{2}\right]=\frac{1}{2}\left|\text{Pr}_{\ell}x\right|^{2},
\]
 where $\text{Pr}_{\ell}$ denotes the orthogonal projection onto
$\ell$. In particular $h_{f}\equiv0$ on $H$. Therefore 
\[
\int_{\RR^{n}}h_{f}\dd\mu=F(f)\ne0=\int_{H}h_{f}\dd\mu,
\]
 so $\mu$ is not supported on $H$. 

We see that $\mu$ satisfies all the assumptions of Theorem \ref{thm:minkowski},
so there exists an essentially continuous $g\in\lc_{n}$ such that
$\mu=\mu_{g}$. But then by Theorem \ref{thm:repr-ess} we have for
every $f\in\lc_{n}$ 
\[
F(f)=\int_{\RR^{n}}h_{f}\dd\mu=\int_{\RR^{n}}h_{f}\dd\mu_{g}=\delta(g,f),
\]
 proving the result.
\end{proof}
A natural question is whether this result can be extended to functionals
$F:\lc_{n}\to(-\infty,\infty]$. However, in order to prove such a
result one first needs to solve the following problem:
\begin{problem}
Fix a finite Borel measure $\mu$ on $\RR^{n}$ and a finite Borel
measure $\nu$ on $\SS^{n-1}$. Under what conditions on $\mu$ and
$\nu$ can one find a function $g\in\lc_{n}$ with $\mu_{g}=\mu$
and $\nu_{g}=\nu$? 
\end{problem}

This is a very natural question, but it is much beyond the scope of
this paper and is better left for future research.

\bibliographystyle{plain}
\bibliography{../../citations/library}
 
\end{document}